\theoremstyle{definition}
\newtheorem{theorem}{Theorem}[section]
\newtheorem{definition}[theorem]{Definition}
\newtheorem{corollary}[theorem]{Corollary}
\newtheorem{lemma}[theorem]{Lemma}
\newtheorem{remark}[theorem]{Remark}
\newtheorem{proposition}[theorem]{Proposition}
\title{Special Joyce structures and hyperk\"ahler metrics}
\date{\small School of Mathematics and Statistics\\
University of Sheffield\\
Hounsfield Road, Sheffield S3 7RH, United Kingdom}
\author{Iv\'an Tulli}
\numberwithin{equation}{section}
\begin{document}
\emergencystretch 3em
\maketitle
\begin{abstract}
    Joyce structures were introduced by T. Bridgeland in the context of the space of stability conditions of a three-dimensional Calabi-Yau category and its associated Donaldson-Thomas invariants. In subsequent work, T. Bridgeland and I. Strachan showed that Joyce structures satisfying a certain non-degeneracy condition encode a complex hyperk\"{a}hler structure on the tangent bundle of the base of the Joyce structure. In this work we give a definition of an analogous structure over an affine special K\"{a}hler (ASK) manifold, which we call a special Joyce structure. Furthermore, we show that it encodes a real hyperk\"{a}hler (HK) structure on the tangent bundle of the ASK manifold, possibly of indefinite signature. Particular examples include the semi-flat HK metric associated to an ASK manifold (also known as the rigid c-map metric) and the HK metrics associated to certain uncoupled variations of BPS structures over the ASK manifold. Finally, we relate the HK metrics coming from special Joyce structures to HK metrics on the total space of algebraic integrable systems. 
\end{abstract}
\tableofcontents
\section{Introduction}

The motivation for this work comes mainly from two sources. On one hand, the work of T. Bridgeland and I. Strachan  on Joyce structures and their associated complex hyperk\"ahler ($\mathbb{C}$-HK) structures \cite{BriStr}; and on the other hand, the work in the physics literature of D. Gaiotto, G. Moore and A. Neitzke, which associates a real hyperk\"ahler (HK) structure to a variation of BPS structures over an affine special K\"{a}hler (ASK) manifold \cite{GMN}. \\

Given a three-dimensional Calabi-Yau triangulated category $\mathcal{D}$, the notion of Joyce structure was introduced in \cite{BridgeJoyce} to describe a certain geometric structure on the space of stability conditions $\text{Stab}(\mathcal{D})$, encoded by the Donaldson-Thomas invariants of $\mathcal{D}$. Assuming certain non-degeneracy conditions are satisfied, the starting point of a Joyce structure is a holomorphic symplectic manifold $\mathcal{M}$ with a compatible flat and torsion-free connection. The data of a Joyce structure over $\mathcal{M}$ associates a family of flat holomorphic Ehresmann connections $\mathcal{A}^{\zeta}$ on $\pi: T\mathcal{M}\to \mathcal{M}$ parametrized by $\zeta\in \mathbb{C}^{\times}=\mathbb{C}\setminus\{0\}$, which must be symplectic and satisfy certain additional properties that we omit for the sake of brevity. Joyce structures are locally encoded in a single holomorphic function $J$ on an open subset of $T\mathcal{M}$, known as the Pleba\'nski function, which must satisfy a set of partial differential equations called Pleba\'nski's second heavenly equations (see \cite[Equation 1]{BriStr}). The way Joyce structures over $\text{Stab}(\mathcal{D})$ are built in the known examples (see \cite[Section 8, 9 and 10]{BridgeJoyce}, \cite{JoyceQuad} and \cite{A2Joyce}) is by solving a certain Riemann-Hilbert (RH) problem determined by the DT invariants and natural structures on $\text{Stab}(\mathcal{D})$ \cite{VarBPS}. The solutions of the RH problem are then used to define the corresponding family $\mathcal{A}^{\zeta}$ of flat holomorphic Ehresmann connections. In subsequent work by T. Bridgeland and I. Strachan \cite{BriStr}, they showed that a Joyce structure over $\mathcal{M}$ naturally encodes a $\mathbb{C}$-HK structure on (an open subset of) $T\mathcal{M}$.  Since Joyce structures are locally encoded by the Pleba\'nski function $J$, the same holds true for the associated $\mathbb{C}$-HK structure.\\

On the other hand, there is a strong parallel between the way Joyce structures are constructed and previous work in the physics literature by D. Gaiotto, G. Moore and A. Neitzke \cite{GMN}. In their work, they start with an ASK manifold $M$ together with a variation of BPS structures over $M$\footnote{We remark that they did not have the notion of variation of BPS structures, since this was introduced later in \cite{VarBPS}. Hence, that terminology does not appear in \cite{GMN}.}. Variations of BPS structures were introduced in \cite{VarBPS}, and can be thought as abstracting the natural structures present in $\text{Stab}(\mathcal{D})$ together with the associated DT invariants, or as abstracting natural structures appearing in the context of 4d $\mathcal{N}=2$ supersymmetric field theories (or the supergravity counterparts) and their BPS indices.  From this data they construct a real HK manifold on the cotangent bundle $T^*M$ by solving a certain RH problem which is related, but different, to the RH problem considered by T. Bridgeland. Physically, this HK metric is an instanton corrected metric of a moduli space associated to a 3d effective theory, obtained from the 4d $\mathcal{N}=2$ theory by compactifying on $S^1$. The construction of \cite{GMN} is mathematically well-understood when the variation of BPS structures is uncoupled\footnote{Also known as mutually-local.} (see for example \cite[Section 3]{CT} for a mathematical treatment of this case). The harder and not well-understood case of coupled variations of BPS structures is quite interesting and important, since particular cases are conjectured to give rise to the HK metrics of certain Higgs bundles moduli spaces \cite{GMNHitchin,NewCHK,Hitchin4dn2}. Some results hinting that this might be true have been obtained in the mathematics literature, for example \cite{FredricksonExpdecay,AsymptoticHitchin}. \\

In this work, we try to relate the above two perspectives by encoding a real HK geometry on the tangent bundle of an ASK manifold by something similar to a Joyce structure. Given an ASK manifold $M$, we define a structure  involving a $\mathbb{C}^{\times}$-family of complexified\footnote{A complexified Ehresmann connection is not in general the complex-linear extension of an Ehresmann connection. However, the latter gives examples of complexified Ehresmann connections. See Section \ref{complexEhressec}.} flat Ehresmann connections $\mathcal{A}^{\zeta}$ on $\pi:TM \to M$. Since such a structure is defined over a special K\"{a}hler manifold, we will call it a special Joyce structure. We nevertheless emphasize that these are not particular cases of the Joyce structures defined in \cite{BridgeJoyce}, but only similar structures. We show in Theorem \ref{maintheorem} that special Joyce structures encode a real HK structure on $TM$, possibly of indefinite signature. Analogously to Joyce structures, the family $\mathcal{A}^{\zeta}$ of a special Joyce structure is also determined by a  function $J$ on (an open subset of) $TM$. Contrary to usual Joyce structures, the function $J$ must now be smooth (instead of holomorphic), and must satisfy a more complicated set of partial differential equations \eqref{Plebanski-like} which nevertheless have similarities to the Pleba\'nski's second heavenly equations \eqref{Plebanskieq} appearing in \cite[Equation 1]{BriStr}. \\

We will see that particular examples of special Joyce structures recover the semi-flat HK metric \cite{SK,ACD}, and the HK metrics associated to uncoupled variations of BPS structures from \cite{GMN} discussed in detail in \cite{CT}. While these examples of HK metrics are more naturally defined in $T^*M$ instead of $TM$, we will see that the HK structure on $TM$ given by the special Joyce structure is related to the one in $T^*M$ by the natural identification $\omega: TM\cong T^*M$, $X \to \omega(X,-)$ given by the K\"{a}hler form $\omega$ of the ASK structure on $M$. Furthermore, even though in general the HK metrics that we get from special Joyce structures might be of indefinite signature, in the examples from above it is known how to control the signature of the resulting HK manifolds in terms of the signature of the ASK metric (see \cite[Section 3]{CT}).\\

The HK metrics constructed from special Joyce structures have a close connection to algebraic integrable systems. Indeed, when the ASK structure on $M$ is described by a period structure $(M,\Gamma,Z)$ (see Section \ref{specialperiodsec}), then provided the HK geometry on $TM$ induced by the special Joyce structure is invariant under fiberwise translations by $2\pi \cdot \Gamma\subset TM$\footnote{In the work of T. Bridgeland \cite{BridgeJoyce}, this kind of property is included in the definition of Joyce structure.}, we obtain an HK metric on $X:=TM/(2\pi \cdot\Gamma)$. We will then see that the canonical projection $\pi:X\to M$ has the structure of an algebraic integrable system compatible with the HK structure on $X$. The examples mentioned above of semi-flat metrics and HK metrics associated to uncoupled variations of BPS structures are instances of this.   \\

The particular form of the family $\mathcal{A}^{\zeta}$  appearing in the definition of special Joyce structures \eqref{famconn} is motivated by trying to generalize the family $\mathcal{A}^{\zeta}$ that occurs for HK metrics associated to uncoupled variations of BPS structures, discussed in Section \ref{uncoupled}. Nevertheless, it is currently unknown to the author whether special Joyce structures are able to capture the missing case of HK metrics associated to coupled variations of BPS structures. We comment on a possible strategy for checking this in Appendix \ref{instgenapp}\footnote{The author would like to thank S. Alexandrov and B. Pioline for suggesting this.}. If the HK metrics corresponding to coupled variations of BPS structures are not included in the HK metrics encoded by special Joyce structures, then this missing case is likely described by a structure that generalizes the current definition of special Joyce structures. In any case, the current definition  seems to be general enough to possibly allow other HK metrics beyond the examples discussed in Section \ref{examplesec}. For instance,  the function $J$ defining the family $\mathcal{A}^{\zeta}$ must in general satisfy a set of non-linear Pleba\'nski-like PDE's \eqref{Plebanski-like}, while in the particular case of uncoupled variation of BPS structures, the corresponding PDE's satisfied by $J$ simplify to linear PDE's \eqref{Plebanski-likelinear}. \\

Finally, we note that part of the definition of Joyce structures from T. Bridgeland requires the data of a holomorphic Euler vector field of $\mathcal{M}$. While in general we do not have the analog of this in the definition of special Joyce structure over $M$, what the analog should be is obvious when $M$ is conical affine special K\"{a}hler (CASK) instead of just ASK. In such a case $M$ comes equipped with a (real) Euler vector field, satisfying similar properties to the holomorphic Euler vector field from Joyce structures. Furthermore, in \cite{CT} it was shown that HK metrics associated to uncoupled variation of BPS structures over a CASK manifold admit an infinitesimal rotating action together with a hyperholomorphic line bundle\footnote{Here by infinitesimal rotating action we mean a Killing vector field which leaves invariant one of the K\"{a}hler forms of the HK structure, and rotates the other two. On the other hand, by hyperholomorphic line bundle we mean a line bundle with connection over the HK manifold, whose curvature is of type $(1,1)$ with respect to the three complex structures of the HK structure.}. One can then apply the HK/QK correspondence to obtain a quaternionic-K\"{a}hler (QK) manifold from the HK manifold. These QK metrics are related to QK structures of certain moduli spaces associated to Calabi-Yau compactifications of type IIA/B string theory. Whether a similar construction holds for HK manifolds associated to a special Joyce structure over a CASK base will be deferred to future work. 

\subsection{Structure of the paper}
\begin{itemize}
    \item In Section \ref{preliminariessec} we start by recalling some well-known facts about affine special K\"ahler (ASK) manifolds and some particular sets of coordinates adapted to the ASK structure. Everything we say about ASK manifolds is either contained in \cite{SK} or \cite{ACD}. We also recall the notion of Ehresmann connection and a slight extension of it, called complexified Ehresmann connection, and give some facts that will be useful for later.   
    \item In Section \ref{SpecialJoycesec} we start discussing certain families $\mathcal{A}^{\zeta}$ of complexified Ehresmann connections parametrized by $\zeta\in\mathbb{C}^{\times}$ and their relation to hypercomplex structures. We then specialize the previous family $\mathcal{A}^{\zeta}$ , and discuss the notion of almost special Joyce structure. When the family $\mathcal{A}^{\zeta}$ associated to the almost special Joyce structure is flat for each $
    \zeta \in \mathbb{C}^{\times}$ and satisfies a certain compatibility property with the ASK structure on $M$, we obtain the notion of special Joyce structure (see Definition \ref{specialJoycedef}). Given a special Joyce structure over $M$, we show that one can associate a natural hyperhermitian structure on $TM$. The main theorem (see Theorem \ref{maintheorem}) shows that this hyperhermitian structure is actually hyperk\"ahler.

    \item In Section \ref{examplesec} we discuss two classes of examples. The first corresponds to the trivial special Joyce structure over an ASK manifold, which recovers the semi-flat HK metric associated to the ASK manifold. The second class is a lot more interesting, and recovers HK manifolds associated to uncoupled variations of BPS structures over an ASK manifold. In particular, we write a function $J$ specifying the special Joyce structure explicitly (see \eqref{Juncoupled}), and check that the required PDE's \eqref{Plebanski-like} are satisfied. For the convenience of the reader, we recall the notion of special period structure and variations of BPS structures, required to understand the example. 
    \item In Section \ref{intsysrel} we discuss the relation between special Joyce structures and algebraic integrable systems. Namely, we consider the case where the ASK manifold comes from a special period structure $(M,\Gamma,Z)$ (see Definition \ref{SPSdef}) and the special Joyce structure is compatible with the period structure. In such a case, one obtains an HK structure on $X=TM/(2\pi \cdot \Gamma)$, and the HK structure induces on the canonical projection $X\to M$ the structure of an algebraic integrable system. All our examples from Section \ref{examplesec} will be instances of this. 

\end{itemize}
\subsection{Conventions}

Unless otherwise stated, all objects and morphisms are smooth. We will frequently disregard the signature of pseudo-Riemannian metrics and refer to pseudo-K\"{a}hler, pseudo-hermitian or pseudo-hyperk\"{a}hler manifolds simply as K\"{a}hler, hermitian or hyperk\"{a}hler. All Hamiltonian vector fields and Poisson brackets are with respect to the symplectic structure on the vertical bundle induced by the special K\"{a}hler form (see Section \ref{vertsympstructure}). Einstein summation convention is used throughout the paper. \\

\textbf{Acknowledgements:} the author is very grateful to S. Alexandrov, T. Bridgeland, A. Neitzke and B. Pioline for very useful discussions and comments. The author would furthermore like to thank V. Cort\'es, Alejandro Gil-Garc\'ia and A. Saha for discussions about a related work in progress \cite{specialHK}; and M. Alim, V. Cort\'es, J. Teschner and T. Weigand for discussions concerning an earlier version of the ideas presented here. 

\section{Preliminaries}\label{preliminariessec}
In this section we discuss some well-known facts about affine special K\"{a}hler (ASK) manifolds, and some basic facts about Ehresmann connections and the related notion of complexified Ehresmann connections. All the results about ASK manifolds can be found in \cite{SK,ACD}, but we include some proofs in order to be as self-contained as possible and to fix notations. 
\subsection{Affine special K\"{a}hler manifolds}

\begin{definition}
    An affine special K\"{a}hler (ASK) manifold is a tuple $(M,I,\omega,\nabla)$ such that:
    \begin{itemize}
        \item $(M,I,\omega)$ is a pseudo-K\"{a}hler manifold, where $I$ is the complex structure and $\omega$ the K\"{a}hler form. The possibly indefinite metric is given by $g(-,-)=\omega(-,I-)$.
        \item $\nabla$ is a flat, torsion-free connection on $M$.
        \item $\nabla \omega=0$ and $\mathrm{d}_{\nabla}I=0$, where in the latter we think of $I$ as an element of $\Omega^{1}(M,TM)$ (i.e. 1-form with values in $TM$) and $d_{\nabla}$ is the natural extension of $\nabla:\Omega^0(M,TM)\to \Omega^1(M,TM)$ to higher degree forms $d_{\nabla}:\Omega^k(M,TM)\to \Omega^{k+1}(M,TM)$.
    \end{itemize}
\end{definition}
We now recall some well-known results from \cite{SK,ACD}.
\begin{lemma}\label{affinespecialcoords} Given an ASK manifold $(M,I,\omega,\nabla)$, there exist locally flat Darboux coordinates $(x^i,y_i)$ for $\omega$. That is 
\begin{equation}
    \omega=\mathrm{d}x^i\wedge \mathrm{d}y_i\,, \quad \nabla (\mathrm{d}x^i)=\nabla (\mathrm{d}y_i)=0\,.
\end{equation}
\end{lemma}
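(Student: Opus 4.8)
The plan is to build the coordinates in two stages: first produce local $\nabla$-affine coordinates using flatness and torsion-freeness of $\nabla$, and then perform a \emph{constant} linear change of those coordinates to bring $\omega$ into Darboux form, the point being that $\nabla\omega=0$ forces the components of $\omega$ in affine coordinates to be constants, so the problem reduces to linear algebra. Note that the complex structure $I$ and the condition $\mathrm{d}_\nabla I=0$ are not needed for this particular statement; they only enter when one further wants to adapt the coordinates to $I$.

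First I would obtain flat coordinates. Since $\nabla$ is flat, around any point there is a $\nabla$-parallel coframe $\theta^1,\dots,\theta^{2n}$. Because $\nabla$ is torsion-free, each $\theta^a$ is closed: for vector fields $X,Y$, the relation $(\nabla_X\theta^a)(Y)=0$ gives $X\theta^a(Y)=\theta^a(\nabla_X Y)$, hence $\mathrm{d}\theta^a(X,Y)=\theta^a(\nabla_X Y-\nabla_Y X-[X,Y])=\theta^a(T_\nabla(X,Y))=0$. By the Poincar\'e lemma, after shrinking the neighbourhood we may write $\theta^a=\mathrm{d}u^a$, and $(u^a)$ is a local coordinate system with $\nabla(\mathrm{d}u^a)=0$.

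Next I would record that in these coordinates $\omega$ has constant coefficients: writing $\omega=\tfrac12\,\omega_{ab}\,\mathrm{d}u^a\wedge\mathrm{d}u^b$, the hypothesis $\nabla\omega=0$ together with $\nabla(\mathrm{d}u^a)=0$ forces $\mathrm{d}\omega_{ab}=0$, so the skew-symmetric matrix $(\omega_{ab})$ is constant, and it is invertible since $\omega$ is nondegenerate. Then I would apply the linear Darboux theorem: there is a constant invertible matrix whose associated linear combinations $x^i,y_i$ of the $u^a$ satisfy $\omega=\mathrm{d}x^i\wedge\mathrm{d}y_i$. Since $x^i,y_i$ are constant-coefficient linear combinations of the flat coordinates $u^a$, their differentials remain $\nabla$-parallel, giving $\nabla(\mathrm{d}x^i)=\nabla(\mathrm{d}y_i)=0$, as desired.

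I do not expect a genuine obstacle here; the proof is essentially bookkeeping. The only points that need care are that \emph{both} hypotheses on $\nabla$ are used (flatness for the existence of a parallel coframe, torsion-freeness for its closedness, hence for the passage from coframe to coordinates), and that the reduction to Darboux form must be carried out by a constant linear change of the affine coordinates, so as not to spoil $\nabla$-parallelism of the resulting coframe.
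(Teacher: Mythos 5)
Your proof is correct and is essentially the same as the paper's: both use flatness to produce a $\nabla$-parallel coframe, torsion-freeness to show it is closed (hence exact, giving affine coordinates), $\nabla\omega=0$ to see that $\omega$ has constant coefficients in that coframe, and a constant linear (Darboux) change of frame. The only cosmetic difference is the order of operations — the paper normalizes the parallel coframe to Darboux form first and then integrates it to coordinates, while you integrate first and then apply the linear Darboux theorem.
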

\begin{proof}
Let $v^i$ be a local flat frame of $T^*M$ with respect to $\nabla$ around $p\in M$. There is a constant linear change of frame $(v^i)_{i=1,...,2\text{dim}_{\mathbb{C}}(M)} \to (\xi^i,\xi_i)_{i=1,...,\text{dim}_{\mathbb{C}}(M)}$ such that at $p\in M$
\begin{equation}\label{pointDarboux}
    \omega|_p=\xi^i\wedge\xi_i|_p\,.
\end{equation}
Since $\nabla \omega=0$ and $(\xi^i,\xi_i)$ are flat, \eqref{pointDarboux} holds on an open set containing $p$. On the other hand, the torsion-free condition can be written as
\begin{equation}\label{torfree}
    \mathrm{d}_{\nabla}(\text{Id}_{TM})=0\,,
\end{equation}
where $\text{Id}_{TM}\in \Omega^1(M,MT)$ is the identity map on $TM$.
Letting $(\eta_i,\eta^i)$ be the flat frame of $TM$ dual to $(\xi^i,\xi_i)$, we conclude from \eqref{torfree} and $\text{Id}_{TM}=\xi^i\otimes \eta_i + \xi_i\otimes \eta^i$ that 
\begin{equation}
    \mathrm{d}\xi^i\otimes \eta_i + \mathrm{d}\xi_i\otimes \eta^i=0\,.
\end{equation}
It then follows that $\mathrm{d}\xi^i=\mathrm{d}\xi_i=0$, and hence locally there is a coordinate system $(x^i,y_i)$ such that
\begin{equation}
    \mathrm{d}x^i=\xi^i, \quad \mathrm{d}y_i=\xi_i\,.
\end{equation}
The result then follows.
\end{proof}

\begin{definition}
    Coordinates $(x^i,y_i)$ on an ASK manifold as in Lemma \ref{affinespecialcoords} are called affine special coordinates. 
\end{definition}
\begin{lemma}\cite[Theorem 1]{ACD}\label{coordlemma} Given an ASK manifold $(M,I,\omega,\nabla)$, locally around any point $p\in M$ there are two associated systems of holomorphic coordinates $(Z^i)_{i=1,...,\text{dim}_{\mathbb{C}}(M)}$ and $(Z_i)_{i=1,...,\text{dim}_{\mathbb{C}}(M)}$ for $(M,I)$ such that $\text{Re}(Z^i)=x^i$, $\text{Re}(Z_i)=-y_i$ define affine special coordinates.
\end{lemma}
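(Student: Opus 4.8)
The plan is to obtain the holomorphic coordinates as ``potentials'' for the affine special coordinates, exploiting the integrability encoded in $\mathrm{d}_\nabla I=0$. The starting observation is that on the complex manifold $(M,I)$ a real function $f$ is the real part of a holomorphic function exactly when the $1$-form $I^\ast\mathrm{d}f$, defined by $(I^\ast\mathrm{d}f)(X)=\mathrm{d}f(IX)$, is closed: in that case $\mathrm{d}f-\sqrt{-1}\,I^\ast\mathrm{d}f$ is a closed $(1,0)$-form, hence locally $\mathrm{d}Z$ for a holomorphic $Z$ with $\mathrm{Re}(Z)=f$. So the real content of the first half is the closedness of $I^\ast\mathrm{d}x^i$ and $I^\ast\mathrm{d}y_i$. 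Writing $I=I^{b}{}_{a}\,\theta^a\otimes e_b$ in a $\nabla$-flat frame $(e_b)$ with $\nabla$-flat dual coframe $(\theta^a)=(\mathrm{d}x^i,\mathrm{d}y_i)$, the identity $\mathrm{d}_\nabla I=0$, together with $\mathrm{d}\theta^a=0$ and $\nabla e_b=0$, collapses to $\sum_a \mathrm{d}I^{b}{}_{a}\wedge\theta^a=0$ for every $b$; since $I^\ast\mathrm{d}x^i$ and $I^\ast\mathrm{d}y_i$ are precisely the combinations $\sum_a I^{b}{}_{a}\theta^a$ for the appropriate values of $b$, this is exactly the closedness required. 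Consequently, for \emph{any} affine special coordinates $(x^i,y_i)$ there are holomorphic functions $Z^i$, $Z_i$ with $\mathrm{Re}(Z^i)=x^i$ and $\mathrm{Re}(Z_i)=-y_i$.

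The second half is to check that $(Z^i)$ and $(Z_i)$ are genuine coordinate systems, i.e. $\mathrm{d}Z^1\wedge\cdots\wedge\mathrm{d}Z^n\neq0$ and likewise for the $Z_i$. Evaluating $\mathrm{d}Z^i=\mathrm{d}x^i-\sqrt{-1}\,I^\ast\mathrm{d}x^i$ on $v\in T^{1,0}_pM$ gives $\mathrm{d}Z^i(v)=2\,\mathrm{d}x^i(v)$, so $(\mathrm{d}Z^i)$ is a coframe at $p$ iff $T^{1,0}_pM\cap(L_p\otimes\mathbb{C})=0$ with $L_p:=\bigcap_i\ker\mathrm{d}x^i|_p$, equivalently $L_p\cap IL_p=0$, and similarly for the $Z_i$ with $L'_p:=\bigcap_i\ker\mathrm{d}y_i|_p$. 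This is where the choice of affine special coordinates must be made: I would first fix at $p$ a Lagrangian subspace $L_p\subset(T_pM,\omega_p)$ with $L_p\cap IL_p=0$, note that $L'_p:=IL_p$ is then a complementary Lagrangian automatically satisfying $L'_p\cap IL'_p=0$, extend $L_p,L'_p$ by $\nabla$-parallel transport to transverse flat Lagrangian foliations near $p$ (well defined since $\nabla$ is flat, Lagrangian since $\nabla\omega=0$, integrable since $\nabla$ is torsion-free), and apply the construction in the proof of Lemma~\ref{affinespecialcoords} with a Darboux coframe at $p$ adapted to the splitting $T_pM=L_p\oplus L'_p$. This yields affine special coordinates $(x^i,y_i)$ with $\bigcap_i\ker\mathrm{d}x^i=L$ and $\bigcap_i\ker\mathrm{d}y_i=L'$, so both nondegeneracy conditions hold and $(Z^i)$, $(Z_i)$ are holomorphic coordinate systems whose prescribed real parts are, by construction, affine special coordinates.

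I expect the only genuine obstacle to be the pointwise linear-algebra step: producing a Lagrangian $L_p$ transverse to its own image $IL_p$. In the positive-definite case this is vacuous, since $v\in L_p\cap IL_p$ would force $g(v,v)=\omega(v,Iv)=0$ with $g$ definite; in the possibly indefinite setting one must argue it, for instance by diagonalizing the Hermitian form $h=g-\sqrt{-1}\,\omega$ on $(T_pM,I)$ and taking $L_p$ to be the span of a ``real'' eigenbasis, which is Lagrangian for $\omega=-\mathrm{Im}(h)$ and visibly transverse to $IL_p$. The remaining ingredients --- the bookkeeping of $\mathrm{d}_\nabla I=0$ in flat coordinates and the fact that two transverse flat Lagrangian foliations admit adapted flat Darboux coordinates --- are routine, the latter being essentially the statement of Lemma~\ref{affinespecialcoords} itself.
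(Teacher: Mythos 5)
Your proof is correct, and it reaches the same two milestones as the paper's --- holomorphy of the $Z$'s from $\mathrm{d}_{\nabla}I=0$, and nondegeneracy from a transversality condition on Lagrangian subspaces --- but by a noticeably different mechanism for the first. The paper packages everything into the single $TM\otimes\mathbb{C}$-valued $1$-form $\pi^{1,0}=\tfrac{1}{2}(\mathrm{Id}_{TM}-\mathrm{i}I)$, shows $\mathrm{d}_{\nabla}\pi^{1,0}=0$, and uses the Poincar\'e lemma for the flat connection to produce one complex vector field $\xi^{1,0}$ with $\nabla\xi^{1,0}=\pi^{1,0}$; the functions $Z^i,Z_i$ are then read off as the components of $\xi^{1,0}$ in a flat Darboux frame, and their holomorphy follows because $\nabla\xi^{1,0}$ is of type $(1,0)$. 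You instead note that in a flat closed coframe $\mathrm{d}_{\nabla}I=0$ is literally the closedness of each $I^{*}\mathrm{d}x^i$ and $I^{*}\mathrm{d}y_i$ (pluriharmonicity of the affine special coordinates) and integrate the closed $(1,0)$-forms $\mathrm{d}x^i-\mathrm{i}\,I^{*}\mathrm{d}x^i$ one at a time; these closedness statements are exactly the components of $\mathrm{d}_{\nabla}\pi^{1,0}=0$, and the paper's $\xi^{1,0}$ is your family of potentials assembled into one object. Your version makes the role of $\mathrm{d}_{\nabla}I=0$ more transparent; the paper's gets the pairing of $(Z^i,Z_i)$ with a flat Darboux frame for free. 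For nondegeneracy both arguments choose the affine special coordinates so that the two flat Lagrangian distributions are transverse to their $I$-images at $p$; the paper merely asserts that a constant symplectic change of frame achieves this, whereas you supply the needed linear algebra (diagonalize $h=g-\mathrm{i}\omega$ and take the real span of an orthogonal basis), which is genuinely required in the indefinite-signature setting. The only slip is cosmetic: with your conventions $\bigcap_i\ker\mathrm{d}x^i$ is spanned by the $\partial_{y_i}$, hence equals $L'$ rather than $L$; since $L'=IL$ satisfies the same transversality condition as $L$, nothing is affected.
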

\begin{proof}
    Consider the projection $\pi^{1,0}:TM\otimes \mathbb{C}\to T^{1,0}M$ into $(1,0)$ vectors with respect to $I$ given by
    \begin{equation}
        \pi^{1,0}=\frac{1}{2}(\mathrm{Id}_{TM}-\mathrm{i}I)\,.
    \end{equation}
    It can be thought as an element of $\Omega^{1,0}(M,TM\otimes \mathbb{C})$.
    The fact that $\mathrm{d}_{\nabla}(I)=0$ and the torsion freeness of $\nabla$ \eqref{torfree} imply that 
    \begin{equation}
        \mathrm{d}_{\nabla} \pi^{1,0}=0\,.
    \end{equation}
    By the Poincar\'e lemma, there is locally a complex vector field $\xi^{1,0}$ such that
    \begin{equation}
        \nabla \xi^{1,0}=\pi^{1,0}\,.
    \end{equation}
    Let $(\gamma_i,\gamma^i)$ be a local flat Darboux frame of $TM$ with respect to $\omega$ around $p\in M$. Then  
    \begin{equation}\label{conjugateholspecial}
        \xi^{1,0}=\frac{1}{2}(Z^i\gamma_i-Z_i\gamma^i)
    \end{equation}
    for some locally defined complex-valued functions $Z^i$ and $Z_i$ on $M$. But then
    \begin{equation}
        \pi^{1,0}=\nabla \xi^{1,0}=\frac{1}{2}(\mathrm{d}Z^i\otimes \gamma_i - \mathrm{d}Z_i\otimes \gamma^i) \in \Omega^{1,0}(M,TM\otimes \mathbb{C})
    \end{equation}
    implies that $\mathrm{d}Z^i, \mathrm{d}Z^i\in \Omega^{1,0}(M)$, and hence $Z^i$ and $Z_i$ are holomorphic functions on $(M,I)$. Note that 
    \begin{equation}
        2\mathrm{Re}(\pi^{1,0})=\text{Id}_{TM}
    \end{equation}
    implies that $(x^i=\text{Re}(Z^i),y_i=-\text{Re}(Z_i))$ is a local coordinate system around $p\in M$ and that $\gamma_i=\partial_{x^i}$, $\gamma^i=\partial_{y_i}$. Hence $(x^i,y_i)$ is flat and Darboux with respect to $\omega$.\\
    
    We now show that both sets of holomorphic functions $(Z^i)$ and $(Z_i)$ define coordinates systems on $M$. To see this, note that $(x^i,y_i)$ defines a Lagrangian splitting of $T^*M$ (with respect to $\omega^{-1}$)
    \begin{equation}
        T^*_pM=L_x\oplus L_y, \quad L_x:=\text{span}\{\mathrm{d}x^i\}_{i=1,...,\text{dim}_{\mathbb{C}}(M)}, \quad L_y:=\text{span}\{\mathrm{d}y_i\}_{i=1,...,\text{dim}_{\mathbb{C}}(M)}\,.
    \end{equation}
    Due to the compatibility of $\omega$ and $I$, by possibly performing a constant symplectic linear change of coordinates, we can assume that 
    \begin{equation}
        L_x\cap I^*L_x=\{0\}, \quad L_y\cap I^*L_y=\{0\}.
    \end{equation}
    
    Noting that 
    \begin{equation}
        \mathrm{d}Z^i=\mathrm{d}x^i-\mathrm{i}I^*\mathrm{d}x^i, \quad \mathrm{d}Z_i=-\mathrm{d}y_i+\mathrm{i}I^*\mathrm{d}y_i\,,
    \end{equation}
    it then follows from the independence of the $\mathrm{d}x^i$ (resp. $\mathrm{d}y_i$) that the $\mathrm{d}Z^i$ (resp. $\mathrm{d}Z_i$) are independent at $p\in M$, and hence locally around $p\in M$. It follows that $(Z^i)$ and $(Z_i)$ are holomorphic coordinate systems of $M$.
\end{proof}
\begin{definition}
    Two systems of holomorphic coordinates $(Z^i)$ and $(Z_i)$ on an ASK manifold obtained as in \eqref{conjugateholspecial} are called a conjugate systems of holomorphic special coordinates. 
\end{definition}

\begin{lemma}\label{usualsKident} Consider a conjugate system of holomorphic special coordinates $(Z^i)$ and $(Z_i)$ for an ASK manifold. 
Define $\tau_{ij}$ by
\begin{equation}
    \mathrm{d}Z_i=\tau_{ij}\mathrm{d}Z^i\,.
\end{equation}
Then there exists a local holomorphic function $\mathfrak{F}(Z^i)$ such that
\begin{equation}
    \tau_{ij}=\frac{\partial^2 \mathfrak{F}}{\partial Z^i \partial Z^j}. 
\end{equation}
In particular, we must have $\tau_{ij}=\tau_{ji}$, and with respect to the holomorphic coordinates $(Z^i)$ we have 
\begin{equation}
    \omega=\frac{\mathrm{i}}{2}\text{Im}(\tau_{ij})\mathrm{d}Z^i\wedge \mathrm{d}\overline{Z}^j\,.
\end{equation}
\end{lemma}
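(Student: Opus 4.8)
The plan is to read everything off from the affine-special-coordinate data supplied by Lemma \ref{coordlemma}: the flat Darboux coordinates $(x^i,y_i)$ with $\omega=\mathrm{d}x^i\wedge\mathrm{d}y_i$, the relations $\mathrm{d}Z^i=\mathrm{d}x^i-\mathrm{i}I^*\mathrm{d}x^i$ and $\mathrm{d}Z_i=-\mathrm{d}y_i+\mathrm{i}I^*\mathrm{d}y_i$, the defining relation $\mathrm{d}Z_i=\tau_{ij}\,\mathrm{d}Z^j$, and the pseudo-K\"ahler condition on $\omega$. As a preliminary step I would check that $\tau_{ij}$ is holomorphic. Since $Z_i$ is holomorphic, $\mathrm{d}Z_i$ is a closed $(1,0)$-form, so differentiating $\mathrm{d}Z_i=\tau_{ij}\,\mathrm{d}Z^j$ and using $\mathrm{d}(\mathrm{d}Z^j)=0$ gives $\mathrm{d}\tau_{ij}\wedge\mathrm{d}Z^j=0$. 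Splitting into types, the $(1,1)$-part reads $\bar\partial\tau_{ij}\wedge\mathrm{d}Z^j=0$; since the $\mathrm{d}Z^j$ form a local $(1,0)$-coframe, the $2$-forms $\mathrm{d}\overline{Z}^k\wedge\mathrm{d}Z^j$ are linearly independent, so $\partial\tau_{ij}/\partial\overline{Z}^k=0$ for all $k$, i.e. $\tau_{ij}$ is a local holomorphic function of the $Z^k$.

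Next I would compute $\omega$ in the coframe $\{\mathrm{d}Z^i,\mathrm{d}\overline{Z}^i\}$. Taking real parts of the relations in Lemma \ref{coordlemma} gives $\mathrm{d}x^i=\tfrac12(\mathrm{d}Z^i+\mathrm{d}\overline{Z}^i)$ and $\mathrm{d}y_i=-\tfrac12(\mathrm{d}Z_i+\mathrm{d}\overline{Z}_i)=-\tfrac12(\tau_{ij}\,\mathrm{d}Z^j+\overline{\tau}_{ij}\,\mathrm{d}\overline{Z}^j)$, hence
\begin{align*}
\omega=\mathrm{d}x^i\wedge\mathrm{d}y_i
&=-\tfrac14\bigl(\tau_{ij}\,\mathrm{d}Z^i\wedge\mathrm{d}Z^j+\overline{\tau}_{ij}\,\mathrm{d}Z^i\wedge\mathrm{d}\overline{Z}^j\\
&\qquad{}+\tau_{ij}\,\mathrm{d}\overline{Z}^i\wedge\mathrm{d}Z^j+\overline{\tau}_{ij}\,\mathrm{d}\overline{Z}^i\wedge\mathrm{d}\overline{Z}^j\bigr)\,.
\end{align*}
Since $(M,I,\omega)$ is pseudo-K\"ahler, $\omega$ is of type $(1,1)$, so the $(2,0)$-component $-\tfrac14\tau_{ij}\,\mathrm{d}Z^i\wedge\mathrm{d}Z^j$ must vanish; as $\mathrm{d}Z^i\wedge\mathrm{d}Z^j$ is skew-symmetric in $i,j$ this is exactly $\tau_{ij}=\tau_{ji}$. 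Feeding the symmetry back in (so that $\tau_{ij}\,\mathrm{d}\overline{Z}^i\wedge\mathrm{d}Z^j=-\tau_{ij}\,\mathrm{d}Z^i\wedge\mathrm{d}\overline{Z}^j$ after relabelling), the two surviving mixed terms combine to $\omega=-\tfrac14(\overline{\tau}_{ij}-\tau_{ij})\,\mathrm{d}Z^i\wedge\mathrm{d}\overline{Z}^j=\tfrac{\mathrm{i}}{2}\,\text{Im}(\tau_{ij})\,\mathrm{d}Z^i\wedge\mathrm{d}\overline{Z}^j$, the asserted expression.

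Finally, for the prepotential: since $\tau_{ij}$ is holomorphic and symmetric, the holomorphic $1$-form $\theta:=Z_i\,\mathrm{d}Z^i$ satisfies $\mathrm{d}\theta=\mathrm{d}Z_i\wedge\mathrm{d}Z^i=\tau_{ij}\,\mathrm{d}Z^j\wedge\mathrm{d}Z^i=0$; as the statement is local we may assume the $Z^i$ identify a neighbourhood of $p$ with a polydisc, so the holomorphic Poincar\'e lemma yields a holomorphic function $\mathfrak{F}(Z^i)$ with $\mathrm{d}\mathfrak{F}=\theta$, whence $\partial\mathfrak{F}/\partial Z^i=Z_i$ and $\partial^2\mathfrak{F}/\partial Z^i\partial Z^j=\partial Z_i/\partial Z^j=\tau_{ij}$ (this also re-proves $\tau_{ij}=\tau_{ji}$). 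The only genuinely load-bearing step is the symmetry $\tau_{ij}=\tau_{ji}$, and there the point to be careful about is that $\omega$ is truly of type $(1,1)$ — which is precisely the pseudo-K\"ahler hypothesis, equivalent to $g=\omega(\cdot,I\cdot)$ being symmetric — together with the routine index bookkeeping in the wedge-product expansion; the Poincar\'e-lemma step and the holomorphicity check are standard.
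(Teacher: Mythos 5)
Your proof is correct and follows essentially the same route as the paper: the type-$(1,1)$ property of $\omega$ together with its Darboux form in affine special coordinates forces $\tau_{ij}=\tau_{ji}$, which makes $Z_i\,\mathrm{d}Z^i$ closed and yields $\mathfrak{F}$ by the Poincar\'e lemma. The only (cosmetic) difference is that you expand $\omega$ in the coframe $\{\mathrm{d}Z^i,\mathrm{d}\overline{Z}^j\}$ while the paper evaluates $\omega$ on the frame $\{\partial_{Z^i},\partial_{\overline{Z}^j}\}$ obtained from $\pi^{1,0}=\nabla\xi^{1,0}$; your explicit holomorphy check for $\tau_{ij}$ is a harmless addition (it is automatic since $\tau_{ij}=\partial Z_i/\partial Z^j$ is the Jacobian of one holomorphic coordinate system with respect to another).
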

\begin{proof}
    Note that writing as in the proof of Lemma \ref{coordlemma} 
    \begin{equation}
        \xi^{1,0}=\frac{1}{2}\left(Z^i\frac{\partial}{\partial x^i}-Z_i\frac{\partial}{\partial y_i}\right)
    \end{equation}
    we find 
    \begin{equation}
        \pi^{1,0}=\frac{1}{2}\left(\mathrm{d}Z^i\otimes \frac{\partial}{\partial x^i} -\tau_{ij}\mathrm{d}Z^j\otimes \frac{\partial}{\partial y_i}\right)\,. 
    \end{equation}
    Evaluating the above at $\partial_{Z^i}$ one finds
    \begin{equation}\label{Zexp}
        \partial_{Z^i}=\frac{1}{2}\left(\frac{\partial}{\partial x^i}-\tau_{ji}\frac{\partial}{\partial y_i}\right)\,.
    \end{equation}
    On the other hand, using that $\omega$ is of type $(1,1)$, we find
    \begin{equation}
        0=\omega\left(\frac{\partial}{\partial Z^i},\frac{\partial}{\partial Z^j}\right)=\frac{1}{4}(\tau_{ji}-\tau_{ij})
    \end{equation}
    so that $\tau_{ij}=\tau_{ji}$. It follows that the holomorphic $1$-form
    \begin{equation}
        Z_i\mathrm{d}Z^i
    \end{equation}
    is closed, and hence locally there is a holomorphic function $\mathfrak{F}$ such that
    \begin{equation}
        \mathrm{d}\mathfrak{F}=Z_i\mathrm{d}Z^i, \quad \implies Z_i=\frac{\partial \mathfrak{F}}{\partial Z^i}, \quad  \tau_{ij}=\frac{\partial^2 \mathfrak{F}}{\partial Z^i \partial Z^j}\,.
    \end{equation}
    Finally, note that
    \begin{equation}
        \omega(\frac{\partial}{\partial Z^i}, \frac{\partial}{\partial \overline{Z}^j})=\frac{1}{4}(\tau_{ji}-\overline{\tau}_{ij})=\frac{\mathrm{i}}{2}\text{Im}(\tau_{ij})
    \end{equation}
    so that 
    \begin{equation}
        \omega=\frac{\mathrm{i}}{2}\text{Im}(\tau_{ij})\mathrm{d}Z^i\wedge \mathrm{d}\overline{Z}^j\,.
    \end{equation}
\end{proof}
\subsection{Complexified Ehresmann connections}\label{complexEhressec}

Consider a smooth submersion $\pi:N\to M$ and let $V_{\pi}:=\text{Ker}(\mathrm{d}\pi)\subset TN$ denote the vertical bundle associated to $\pi$. This gives rise to the short exact sequence of vector bundles over $N$
\begin{equation}
   0\longrightarrow V_{\pi}\overset{i}{\longrightarrow } TN \overset{\mathrm{d}\pi}{\longrightarrow } \pi^*(TM)\longrightarrow 0\,. 
\end{equation}
\begin{definition}
An Ehresmann connection on $\pi:N\to M$ is a splitting of the above short exact sequence. That is, a vector bundle map $\mathcal{A}:\pi^*(TM)\to TN$ such that $\mathrm{d}\pi\circ \mathcal{A}=1$. Given $X\in \pi^*(TM)$ we use the notation $\mathcal{A}_X:=\mathcal{A}(X)$ for the evaluation.  
\end{definition}
\begin{definition}\label{complexEhr}
    Let $\pi:N \to M$ as before and consider the complexified short exact sequence
    \begin{equation}
   0\longrightarrow V_{\pi}\otimes \mathbb{C}\overset{i}{\longrightarrow } TN\otimes \mathbb{C} \overset{\mathrm{d}\pi}{\longrightarrow } \pi^*(TM)\otimes \mathbb{C}\longrightarrow 0\,.
\end{equation}
A complexified Ehresmann connection on $\pi:N\to M$ is a complex vector bundle map $\mathcal{A}:\pi^*(TM)\otimes \mathbb{C}\to TN\otimes \mathbb{C}$ such that $\mathrm{d}\pi\circ \mathcal{A}=1$
\end{definition}

\begin{definition}
An Ehresmann connection (resp. complexified Ehresmann connection) $\mathcal{A}$ is flat if the distribution $\text{Im}(\mathcal{A})\subset TN$ (resp. $\text{Im}(\mathcal{A})\subset TN\otimes \mathbb{C})$ is involutive. Namely, given any local sections of $X,Y$ of $\text{Im}(\mathcal{A})\to N$, $[X,Y]$ is also a local section of $\text{Im}(\mathcal{A})\to N$.
\end{definition}

\begin{remark}\label{pullbackremark}
    We frequently abuse notation and evaluate $\mathcal{A}$ on local sections of $TM\to M$ (resp. $TM\otimes \mathbb{C}\to M$), with the understanding that we evaluate it on the canonically induced local section $X\circ \pi$ of $\pi^*TM\to N$ (resp. $\pi^*TM\otimes \mathbb{C} \to N$). Note in particular that since $\pi^*(TM)\to N$ admits local frames of such pullback sections, in order to check flatness it is enough to check that for every local frame  $(e_i)$ of $TM\to M$, we have
    \begin{equation}
        [\mathcal{A}_{e_i},\mathcal{A}_{e_j}]\subset \text{span}\{\mathcal{A}_{e_i}\}_{i=1,..,\text{dim}(M)}\,.
    \end{equation}
\end{remark}
We will also make frequent use of the following lemma
\begin{lemma}\label{Ehrflatlemma}
    Assume that $N=TM$, and $\pi:TM\to M$ is the canonical projection. Then an Ehresmann connection $\mathcal{A}$ (resp. complexified Ehresmann connection) on $\pi:TM\to M$ is flat if and only if for all local sections $X,Y$ of $TM \to M$ (resp. $TM\otimes \mathbb{C}\to M$) we have 
    \begin{equation}\label{flatsimp}
[\mathcal{A}_{X},\mathcal{A}_Y]=\mathcal{A}_{[X,Y]}\,.
    \end{equation}
\end{lemma}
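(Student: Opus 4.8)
The plan is to exploit the elementary fact that, under the abuse of notation of Remark \ref{pullbackremark}, the vector field $\mathcal{A}_X$ on $N=TM$ is $\pi$-related to the local section $X$ of $TM\to M$ (resp. of $TM\otimes\mathbb{C}\to M$): indeed $\mathrm{d}\pi\circ\mathcal{A}=1$ says exactly $\mathrm{d}\pi(\mathcal{A}_X)=X\circ\pi$.

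First I would record the following observation, valid for any local sections $X,Y$: since $\mathcal{A}_X,\mathcal{A}_Y$ are $\pi$-related to $X,Y$, their bracket $[\mathcal{A}_X,\mathcal{A}_Y]$ is $\pi$-related to $[X,Y]$; but $\mathcal{A}_{[X,Y]}$ is also $\pi$-related to $[X,Y]$, so $[\mathcal{A}_X,\mathcal{A}_Y]-\mathcal{A}_{[X,Y]}$ lies in $\text{Ker}(\mathrm{d}\pi)=V_\pi$, i.e. it is vertical. Equivalently, $\mathcal{A}_{[X,Y]}$ is precisely the $\text{Im}(\mathcal{A})$-component of $[\mathcal{A}_X,\mathcal{A}_Y]$ relative to the splitting $TN=\text{Im}(\mathcal{A})\oplus V_\pi$.

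With this in hand both implications of \eqref{flatsimp} are short. For ``$\Leftarrow$'': assuming \eqref{flatsimp} for all local sections, apply it to a local frame $(e_i)$ of $TM\to M$, write $[e_i,e_j]=c_{ij}^k e_k$ with $c_{ij}^k\in C^\infty(M)$, and use that the vector bundle map $\mathcal{A}$ is $C^\infty(N)$-linear on pullback sections to get $[\mathcal{A}_{e_i},\mathcal{A}_{e_j}]=(c_{ij}^k\circ\pi)\mathcal{A}_{e_k}\in\text{span}\{\mathcal{A}_{e_k}\}$; flatness then follows from Remark \ref{pullbackremark}. For ``$\Rightarrow$'': if $\mathcal{A}$ is flat then by Remark \ref{pullbackremark} we have $[\mathcal{A}_{e_i},\mathcal{A}_{e_j}]\in\text{span}\{\mathcal{A}_{e_k}\}=\text{Im}(\mathcal{A})$ for a local frame; since also $\mathcal{A}_{[e_i,e_j]}\in\text{Im}(\mathcal{A})$, the difference $[\mathcal{A}_{e_i},\mathcal{A}_{e_j}]-\mathcal{A}_{[e_i,e_j]}$ lies in $\text{Im}(\mathcal{A})$, while by the observation above it is vertical; as $\mathcal{A}$ is a splitting, $\text{Im}(\mathcal{A})\cap V_\pi=\{0\}$, so the difference vanishes and \eqref{flatsimp} holds on the frame. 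To pass from a frame to arbitrary local sections $X=a^ie_i,\ Y=b^je_j$, I would expand both sides of \eqref{flatsimp} using the Leibniz rule, the $C^\infty(N)$-linearity of $\mathcal{A}$, and the identity $\mathcal{A}_{e_i}(f\circ\pi)=(e_if)\circ\pi$ (immediate from $\mathrm{d}\pi(\mathcal{A}_{e_i})=e_i\circ\pi$), and check that the two resulting expressions agree — a routine computation I would not spell out in full. The complexified case is handled verbatim, reading all brackets and coefficients $\mathbb{C}$-linearly.

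There is no real obstacle here; the only thing demanding care is the bookkeeping around the pullback abuse of notation — consistently interpreting $\mathcal{A}_X$ as $\mathcal{A}$ applied to $X\circ\pi$, and remembering that $\mathcal{A}$, being a bundle map, commutes with multiplication by functions on $N$ so that the Leibniz terms are accounted for correctly.
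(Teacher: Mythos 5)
Your proof is correct and follows essentially the same route as the paper's: reduce to a local frame, observe that $[\mathcal{A}_X,\mathcal{A}_Y]-\mathcal{A}_{[X,Y]}$ is vertical while flatness forces it into $\text{Im}(\mathcal{A})$, conclude it vanishes from the splitting $TN=\text{Im}(\mathcal{A})\oplus V_\pi$, and then extend to arbitrary sections by the Leibniz rule. The only difference is cosmetic: you phrase the verticality via $\pi$-relatedness for a general frame, whereas the paper computes directly in the induced coordinates $(x^i,\varphi^i)$ using the commuting frame $\partial_{x^i}$.
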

\begin{proof}
    We do the proof for an Ehresmann connection, with the proof for a complexified Ehresmann connection following identically. If \eqref{flatsimp} holds, by Remark \ref{pullbackremark} the connection is flat. On the other hand, assume that the connection is flat and take local coordinates $x^i$ on $M$. They induce coordinates $(x^i,\varphi^i)$ on $TM$. By the definition of an Ehresmann connection, we must have (with the usual abuse of notation from Remark \ref{pullbackremark})
    \begin{equation}
        \mathcal{A}_{\frac{\partial}{\partial x^i}}=\frac{\partial}{\partial x^i}+f^k_i\frac{\partial}{\partial \varphi^k}
    \end{equation}
    for some functions $f^k_i$ on $TM$.
    In particular, 
    \begin{equation}
        [\mathcal{A}_{\frac{\partial}{\partial x^i}},\mathcal{A}_{\frac{\partial}{\partial x^j}}]=\left(\frac{\partial f_j^k}{\partial x^i}-\frac{\partial f_i^k}{\partial x^j}\right)\frac{\partial}{\partial \varphi^k}\,.
    \end{equation}
    But since the latter is a section of $V_{\pi}$,
    the flatness of $\mathcal{A}$ implies that the above quantity must be zero, and hence 

     \begin{equation}
        [\mathcal{A}_{\frac{\partial}{\partial x^i}},\mathcal{A}_{\frac{\partial}{\partial x^j}}]=0\,.
    \end{equation}
    Now note that if $X,Y$ are local sections of $TM \to M$, then with respect to the local coordinate system $(x^i,\varphi^i)$ from before
    \begin{equation}
    \begin{split}
[\mathcal{A}_X,\mathcal{A}_Y]&=X^i\mathcal{A}_{\frac{\partial}{\partial x^i}}(Y^j)\mathcal{A}_{\frac{\partial}{\partial x^j}}-Y^i\mathcal{A}_{\frac{\partial}{\partial x^i}}(X^j)\mathcal{A}_{\frac{\partial}{\partial x^j}}+X^iY^j[\mathcal{A}_{\frac{\partial}{\partial x^i}},\mathcal{A}_{\frac{\partial}{\partial x^j}}]\\
&=X^i\frac{\partial}{\partial x^i}(Y^j)\mathcal{A}_{\frac{\partial}{\partial x^j}}-Y^i\frac{\partial}{\partial x^i}(X^j)\mathcal{A}_{\frac{\partial}{\partial x^j}}\\
&=\mathcal{A}_{[X,Y]}\,.
\end{split}
    \end{equation}
Were we used that the functions $X^i$ and $Y^j$ depend only on $x^i$ (since $X$ and $Y$ are local section of $TM\to M$). 
\end{proof}

For future reference, we note that a connection $\nabla$ on $M$ induces a natural Ehresmann connection $\mathcal{A}$ on $\pi:TM \to M$ as follows. Let $V\in T_pM$ and $X\in T_pM$, so that $(V,X)\in \pi^*(TM)$. Furthermore, let $\gamma$ be a curve in $M$ such that $\gamma(0)=p$ and $\dot{\gamma}(0)=X$, and let 
\begin{equation}
    P_{\gamma,t}:T_{\gamma(0)}M\to T_{\gamma(t)}M
\end{equation}
denote the parallel transport induced by $\nabla$. We then obtain the curve $t\to P_{\gamma,t}(V)$ in $TM$ and define
\begin{equation}\label{inducedconndef}
    \mathcal{A}_{(V,X)}:=\frac{d}{dt}P_{\gamma,t}(V)\Big|_{t=0}\in T_V(TM)\,.
\end{equation}
It is easy to check that $\mathcal{A}_{(V,X)}$ is well defined (i.e. it does not depend on the curve $\gamma$ chosen such that $\gamma(0)=p$ and $\dot{\gamma}(0)=X$). Indeed if $(x^i)$ are local coordinates on $M$ and $(x^i,\varphi^i)$ the induced coordinates on $TM$, then one easily checks that 
\begin{equation}\label{inducedconn}
    \mathcal{A}_{(V,X)}=\frac{d}{dt}P_{\gamma,t}(V)\Big|_{t=0}=X^i\left(\frac{\partial}{\partial x^i} -V^j\Gamma_{ij}^k\frac{\partial}{\partial \varphi^k}\right), \quad \text{where}\quad  X=X^i\frac{\partial}{\partial x^i}, \quad V=V^i\frac{\partial}{\partial x^i}\,,
\end{equation}
and $\Gamma_{ij}^k$ are the Christoffel symbols of $\nabla$ with respect to the coordinates $(x^i)$. From \eqref{inducedconn} it immediately follows that $\mathcal{A}$ as defined in \eqref{inducedconndef} gives an Ehresmann connection on $\pi: TM \to M$.

\section{Special Joyce structures and hyperk\"{a}hler structures}\label{SpecialJoycesec}

In this section we study certain families  $\mathcal{A}^{\zeta}$ of complexified Ehresmann connections on $\pi:TM \to M$, and associated geometric structures on $TM$. We start with a rather general family $\mathcal{A}^{\zeta}$ related to hypercomplex structures on $TM$, and then focus on a more particular family to define special Joyce structures. A special Joyce structure has a natural hyperhermitian structure defined in terms of the ASK structure and the family $\mathcal{A}^{\zeta}$, and the main theorem (see Theorem \ref{maintheorem}) states that the hyperhermitian structure is actually hyperk\"{a}hler.  

\subsection{Complexified Ehresmann connections and hypercomplex structures}\label{hypercomplexsection}

In this section we take a complex manifold $(M,I)$ and take $N:=TM$ with the natural projection $\pi:N\to M$. We furthermore denote by $TM\otimes \mathbb{C}=T^{1,0}M\oplus T^{0,1}M$ the usual splitting given by the eigenspaces of $I$. We consider a complex vector bundle map
\begin{equation}\label{hdef}
    h:\pi^*(T^{1,0}M)\to TN\otimes \mathbb{C}\,,
\end{equation}
together with an antilinear\footnote{The reason for taking $v$ complex anti-linear instead of complex linear is only a matter of convention. The convention is such that in complex structure $I_3$ on $N$ (to be defined later), the $(1,0)$ vectors are spanned by $h_X$ and $v_X$ for $X\in T^{1,0}M$, instead of $h_X$ and $\overline{v_X}$ for $X\in T^{1,0}M$.} complex vector bundle map 
\begin{equation}\label{vdef}
    v:\pi^*(T^{1,0}M)\to V_{\pi}\otimes \mathbb{C}=\text{Ker}(\mathrm{d}\pi)\otimes \mathbb{C}\,.
\end{equation}
That is, for $X\in \pi^*(T^{1,0}M)$ and $\lambda \in \mathbb{C}$ we have
\begin{equation}
    v_{\lambda X}=\overline{\lambda}v_X\,.
\end{equation}
We further assume the non-degeneracy condition 
\begin{equation}\label{non-deg}
    TN\otimes \mathbb{C}=\text{Im}(h)\oplus \text{Im}(\overline{h})\oplus \text{Im}(v)\oplus \text{Im}(\overline{v})\,, \; \quad d\pi \circ h_X=X, \quad d\pi\circ \overline{h_X}=\overline{X}\,.
\end{equation}
\begin{remark}\label{hvrealremark}
    If we wish, we can extend the definition of $h$, and for $X\in \pi^*(T^{1,0}M)$ let, 
    \begin{equation}
h_{\overline{X}}:=\overline{h_X}\,.
    \end{equation}
    It then follows that $h$ is the complex linear extension of a real vector bundle map
    \begin{equation}
        h:\pi^*(TM)\to TN\,.
    \end{equation}
    The last two conditions in \eqref{non-deg} then say that $h$ defines an Ehresmann connection on $\pi:TM \to M$. A similar extension can be done for the definition of $v$, so that it comes from the complex anti-linear extension of a real vector bundle map
    \begin{equation}
        v: \pi^*(TM)\to V_{\pi}\,.
    \end{equation}
    Nevertheless, we will continue think of $h$ and $v$ as in \eqref{hdef} and \eqref{vdef}.
\end{remark}
From this data, we want to consider a family of complexified Ehresmann connections $\mathcal{A}^{\zeta}$ on $\pi:N\to M$ parametrized by $\zeta\in \mathbb{C}^{\times}:=\mathbb{C}\setminus\{0\}$.  We assume that $\mathcal{A}^{\zeta}$ is given as follows for $X\in\pi^*(T^{1,0}M)$ 
\begin{equation}\label{Ehrcongen}
\begin{split}
\mathcal{A}_{X}^{\zeta}&=h_X-\frac{1}{\zeta}\overline{v_X}\\
\mathcal{A}_{\overline{X}}^{\zeta}&=\overline{h_X}+\zeta v_X\,.
\end{split}
\end{equation}
By $\eqref{non-deg}$, it immediately follows that $\mathcal{A}^{\zeta}$ is a complexified Ehresmann connection. However, note that we always have 
\begin{equation}
    \mathcal{A}_{X}^{\zeta}\neq \overline{\mathcal{A}_{\overline{X}}^{\zeta}}\,
\end{equation}
so $\mathcal{A}^{\zeta}$ is not the complex linear extension of an Ehresmann connection. \\

We also let for $\zeta=0,\infty$ and $X\in \pi^*(T^{1,0}M)$
\begin{equation}\label{conext}
\begin{split}
\mathcal{A}^{\zeta=0}_X&:=\zeta\mathcal{A}_X^{\zeta}|_{\zeta=0}=-\overline{v_X}, \quad\quad  \mathcal{A}^{\zeta=0}_{\overline{X}}:=\mathcal{A}_{\overline{X}}^{\zeta}|_{\zeta=0}=\overline{h_X}\\
\mathcal{A}^{\zeta=\infty}_X&:=\mathcal{A}_X^{\zeta}|_{\zeta=\infty}=h_X, \quad\quad  \mathcal{A}^{\zeta=\infty}_{\overline{X}}:=\frac{1}{\zeta}\mathcal{A}_{\overline{X}}^{\zeta}|_{\zeta=\infty}=v_X\\
\end{split}
\end{equation}
Note that $\mathcal{A}^{\zeta=0}$ and $\mathcal{A}^{\zeta=\infty}$ are not complexified Ehresmann connections on $\pi:TM \to M$.\\

For future reference, we reformulate the flatness condition of $\mathcal{A}^{\zeta}$ for each $\zeta \in \mathbb{C}^{\times}$ in the following lemma.
\begin{lemma}\label{flatlemma}
    The family of complexified Ehresmann connections $\mathcal{A}^{\zeta}$ defined in \eqref{Ehrcongen} is flat for all $\zeta\in \mathbb{C}^{\times}$ if and only if the following equations hold for all $X,Y$ local holomorphic sections of $T^{1,0}M\to M$:
    \begin{equation}\label{flateq}
        \begin{split}
         [h_X,h_Y]&=h_{[X,Y]}, \quad [h_X,\overline{v_Y}]+[\overline{v_X},h_Y]=\overline{v_{[X,Y]}}, \quad [v_X, v_Y]=0\,\\
        [h_X,\overline{h_Y}]&=[\overline{v_X},v_Y], \quad [h_X,v_Y]=0\,.\\
        \end{split}
    \end{equation}
\end{lemma}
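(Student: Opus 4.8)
The plan is to use Lemma \ref{Ehrflatlemma} to turn flatness of each $\mathcal{A}^{\zeta}$ into the bracket identity \eqref{flatsimp}, test it on a convenient local frame, and then read off \eqref{flateq} by comparing powers of $\zeta$.

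First I would choose local holomorphic coordinates on $M$, so that $\partial_{z^i}$ is a holomorphic frame of $T^{1,0}M\to M$ and $(\partial_{z^i},\partial_{\overline z^i})$ a frame of $TM\otimes\mathbb{C}\to M$, whose pullbacks frame $\pi^*(TM\otimes\mathbb{C})\to N$. By Lemma \ref{Ehrflatlemma} together with Remark \ref{pullbackremark}, $\mathcal{A}^{\zeta}$ is flat if and only if $[\mathcal{A}^{\zeta}_U,\mathcal{A}^{\zeta}_W]=\mathcal{A}^{\zeta}_{[U,W]}$ for $U,W$ among these frame sections, and by the tensoriality of $h$ and $v$ and the Leibniz rule for the Lie bracket this is equivalent to the same identity for all local holomorphic sections $X,Y$ of $T^{1,0}M\to M$ and their conjugates. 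Since $[X,\overline Y]=0$, $[\overline X,\overline Y]=\overline{[X,Y]}$, and $[X,Y]$ is again holomorphic, there remain three cases to check: $(X,Y)$, $(X,\overline Y)$ and $(\overline X,\overline Y)$.

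Next I would expand the three brackets using the definition \eqref{Ehrcongen}, the $\mathbb{C}$-bilinearity of the Lie bracket (legitimate because $\zeta$ is a constant) and the identity $[\overline P,\overline Q]=\overline{[P,Q]}$. For the first case one obtains
\begin{equation*}
[\mathcal{A}^{\zeta}_X,\mathcal{A}^{\zeta}_Y]-\mathcal{A}^{\zeta}_{[X,Y]}=\big([h_X,h_Y]-h_{[X,Y]}\big)-\frac{1}{\zeta}\big([h_X,\overline{v_Y}]+[\overline{v_X},h_Y]-\overline{v_{[X,Y]}}\big)+\frac{1}{\zeta^{2}}\,\overline{[v_X,v_Y]}\,,
\end{equation*}
which is a Laurent polynomial in $\zeta$ supported on the exponents $\{0,-1,-2\}$; the two remaining cases give, likewise, Laurent polynomials in $\zeta$ supported on $\{-1,0,1\}$ and $\{0,1,2\}$ respectively. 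At each point of $N$ such an expression is a Laurent polynomial in $\zeta$ valued in a finite-dimensional space, so it vanishes for all $\zeta\in\mathbb{C}^{\times}$ if and only if every coefficient vanishes. The first case yields the three equations in the first line of \eqref{flateq}; the second yields $[h_X,\overline{h_Y}]=[\overline{v_X},v_Y]$ and $[h_X,v_Y]=0$ (its $\zeta^{-1}$-coefficient only reproduces $[h_Y,v_X]=0$, the same family of equations once $X,Y$ range over all holomorphic sections); and the third case only reproduces the complex conjugates of the equations already obtained from the first. Collecting the independent equations gives exactly \eqref{flateq}, and conversely \eqref{flateq} makes all three Laurent polynomials vanish identically, so flatness for all $\zeta$ is equivalent to \eqref{flateq}.

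The computation is entirely mechanical; the one delicate point — where a sign or conjugation slip would be easy to make — is keeping track of which of $h_X,\overline{h_X},v_X,\overline{v_X}$ accompanies which power of $\zeta$ in each bracket, and checking that the assembled list of coefficient equations collapses to precisely the five displayed in \eqref{flateq} with no spurious extra constraints. The passage in the first step from a local frame to arbitrary holomorphic sections is routine and could be left to the reader or relegated to a remark.
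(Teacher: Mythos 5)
Your proposal is correct and follows essentially the same route as the paper: reduce flatness to the bracket identity of Lemma \ref{Ehrflatlemma}, expand $[\mathcal{A}^{\zeta}_{\cdot},\mathcal{A}^{\zeta}_{\cdot}]-\mathcal{A}^{\zeta}_{[\cdot,\cdot]}$ in the three cases $(X,Y)$, $(X,\overline{Y})$, $(\overline{X},\overline{Y})$ using $[X,\overline{Y}]=0$, and equate the Laurent coefficients in $\zeta$ to zero, noting that the leftover equations are conjugates or relabelings of the five displayed. Your explicit remarks on why the $\zeta^{-1}$-coefficient in the mixed case and the whole antiholomorphic case yield nothing new are just a more careful spelling-out of what the paper leaves implicit.
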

\begin{proof}
    We make use of Lemma \ref{Ehrflatlemma}. From the condition $[\mathcal{A}_X,\mathcal{A}_Y]=\mathcal{A}_{[X,Y]}$ and the definition \eqref{Ehrcongen} we obtain the constraints 
\begin{equation}
    [h_X,h_Y]=h_{[X,Y]}, \quad [h_X,\overline{v_Y}]+[\overline{v_X},h_Y]=\overline{v_{[X,Y]}}, \quad [\overline{v_X}, \overline{v_Y}]=0\,.
\end{equation}
With similar constraints from $[\mathcal{A}_{\overline{X}},\mathcal{A}_{\overline{Y}}]=\mathcal{A}_{[\overline{X},\overline{Y}]}$, namely the conjugate from the above
\begin{equation}
    [\overline{h_X},\overline{h_Y}]=\overline{h_{[X,Y]}}, \quad [\overline{h_X},v_Y]+[v_X,\overline{h_Y}]=v_{[X,Y]}, \quad [v_X, v_Y]=0\,.
\end{equation}
Finally, since $X$ and $Y$ are holomorphic, we have $[X,\overline{Y}]=0$. Hence, from $[\mathcal{A}_X,\mathcal{A}_{\overline{Y}}]=\mathcal{A}_{[X,{\overline{Y}}]}=0$ we obtain 
\begin{equation}
    [h_X,\overline{h_Y}]=[\overline{v_X},v_Y], \quad [h_X,v_Y]=0, \quad [\overline{v_X},\overline{h_Y}]=0\,.
\end{equation}
\end{proof}
\begin{remark}
    Recall from Remark \ref{hvrealremark} that we can think of $h$ as coming from an Ehresmann connection on $TM\to M$, and hence as a complexified Ehresmann connection by extending complex-linearly. Note that while the first equation in \eqref{flateq} is a flatness condition when $X$ and $Y$ are holomorphic sections of $T^{1,0}M\to M$, we only have $[h_X,\overline{h_Y}]=[\overline{v_X},v_Y]$, so flatness of $h$ as a complexified Ehresmann connection is not guaranteed. In fact, in the main example of Section \ref{uncoupled} one can check that $[h_X,\overline{h_Y}]\neq 0=h_{[X,\overline{Y}]}$, so in that case $h$ is non-flat.
\end{remark}
\subsubsection{The associated hypercomplex structure}

Now note that for $\zeta \in \mathbb{C}^{\times}$ or $\zeta=0,\infty$ it follows from the definitions \eqref{Ehrcongen}, \eqref{conext}, together with \eqref{non-deg} that
\begin{equation}
    TN\otimes \mathbb{C}=\text{Im}(\mathcal{A}^{\zeta})\oplus\text{Im}(\overline{\mathcal{A}^{\zeta}})\,.
\end{equation}
Hence, we can define almost complex structures $I_{\zeta}$ on $N$ by letting $\text{Im}(A^{\zeta})$ be the $\mathrm{-i}$-eigenspace and $\text{Im}(\overline{\mathcal{A}^{\zeta}})$ the $\mathrm{i}$-eigenspace of $I_{\zeta}$. We furthermore define $I_i$ for $i=1,2,3$ to be $I_{\zeta}$ for $\zeta=\mathrm{i},-1,0$ respectively. 

\begin{proposition}\label{almosthypercomplex} The almost complex structures $I_i$ on $N$ satisfy the quaternion relations 
\begin{equation}
    I_1I_2=I_3, \quad I_iI_j=-I_jI_i \quad \text{for} \quad i\neq j\,.
\end{equation}
In particular, $(N,I_1,I_2,I_3)$ is an almost hypercomplex manifold. 
\begin{proof}
    From the definitions of $I_i$ it easily follows that 
        \begin{equation}\label{cstr}
    \begin{split}
        I_3(h_X)&=\mathrm{i}h_X, \quad I_3(v_X)=\mathrm{i}v_X\\
        I_1(h_X)&=\overline{v_X}, \quad I_1(v_X)=-\overline{h_X}\\
        I_2(h_X)&=-\mathrm{i}\overline{v_X}, \quad I_2(v_X)=\mathrm{i}\overline{h_X}\,,
    \end{split}
    \end{equation}
with the other evaluations at $\overline{h_X}$ and $\overline{v_X}$ determined by the fact that the $I_i$ are real operators. From the above the required quaternion relations follow. 
\end{proof}
\end{proposition}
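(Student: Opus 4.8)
The plan is to reduce the statement to the explicit action of each $I_i$ on the complex frame $\{h_X,\overline{h_X},v_X,\overline{v_X}\}$ of $TN\otimes\mathbb{C}$, with $X$ running over a local frame of $T^{1,0}M$, and then to verify the quaternion relations by composing these linear maps one frame vector at a time. As a preliminary (already recorded in the text before the statement), each $I_\zeta$ is genuinely an almost complex structure: by \eqref{non-deg} together with \eqref{Ehrcongen} and \eqref{conext} one has $TN\otimes\mathbb{C}=\mathrm{Im}(\mathcal{A}^\zeta)\oplus\mathrm{Im}(\overline{\mathcal{A}^\zeta})$, the two summands are exchanged by complex conjugation, so the $\mathbb{C}$-linear endomorphism $I_\zeta$ with $(-\mathrm{i})$-eigenspace $\mathrm{Im}(\mathcal{A}^\zeta)$ and $(+\mathrm{i})$-eigenspace $\mathrm{Im}(\overline{\mathcal{A}^\zeta})$ is real and satisfies $I_\zeta^2=-\mathrm{id}$; this applies to $I_1=I_{\mathrm{i}}$, $I_2=I_{-1}$, $I_3=I_0$.

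First I would read off the frame action at the three values $\zeta=0,\mathrm{i},-1$. For $\zeta=0$ the $(-\mathrm{i})$-eigenspace is spanned by $\mathcal{A}^{\zeta=0}_X=-\overline{v_X}$ and $\mathcal{A}^{\zeta=0}_{\overline X}=\overline{h_X}$, giving $I_3\overline{v_X}=-\mathrm{i}\overline{v_X}$, $I_3\overline{h_X}=-\mathrm{i}\overline{h_X}$, hence $I_3 h_X=\mathrm{i}h_X$, $I_3 v_X=\mathrm{i}v_X$ by reality. For $\zeta=\mathrm{i}$ (where the coefficient $-1/\zeta$ becomes $+\mathrm{i}$) the vectors $h_X+\mathrm{i}\overline{v_X}$ and $\overline{h_X}+\mathrm{i}v_X$ span the $(-\mathrm{i})$-eigenspace and their conjugates the $(+\mathrm{i})$-eigenspace; solving the resulting small linear systems yields $I_1 h_X=\overline{v_X}$, $I_1 v_X=-\overline{h_X}$. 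The case $\zeta=-1$ (coefficient $+1$) is handled in the same way and gives $I_2 h_X=-\mathrm{i}\overline{v_X}$, $I_2 v_X=\mathrm{i}\overline{h_X}$. Together with reality these are exactly the relations \eqref{cstr}, and they determine each $I_i$ completely.

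Then I would deduce the quaternion relations formally from \eqref{cstr}, together with the conjugate identities obtained by applying complex conjugation (e.g. $I_1\overline{v_X}=-h_X$, $I_1\overline{h_X}=v_X$, and similarly for $I_2,I_3$). For instance $I_1 I_2 h_X=I_1(-\mathrm{i}\overline{v_X})=-\mathrm{i}\,I_1\overline{v_X}=-\mathrm{i}(-h_X)=\mathrm{i}h_X=I_3 h_X$, and $I_1 I_2 v_X=I_1(\mathrm{i}\overline{h_X})=\mathrm{i}\,I_1\overline{h_X}=\mathrm{i}v_X=I_3 v_X$; evaluating likewise on $\overline{h_X}$ and $\overline{v_X}$ gives $I_1 I_2=I_3$ on all of $TN\otimes\mathbb{C}$, and the same one-line computations give $I_i I_j=-I_j I_i$ for $i\ne j$. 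Since each $I_i$ already satisfies $I_i^2=-\mathrm{id}$, this says precisely that $(N,I_1,I_2,I_3)$ is almost hypercomplex.

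The argument has no real obstacle — it is linear algebra on a frame — and the only step needing care is faithfully converting the $\zeta$-dependent formulas \eqref{Ehrcongen} into eigenspace data at $\zeta=0,\mathrm{i},-1$: one must keep straight the coefficient $-1/\zeta$, the antilinearity $v_{\lambda X}=\overline{\lambda}v_X$, and the fact that $\mathcal{A}^{\zeta=0}$ is the rescaled limit $\zeta\mathcal{A}^{\zeta}|_{\zeta=0}$ rather than a naive evaluation, since a sign slip in any of these would propagate into \eqref{cstr} and hence into the quaternion relations.
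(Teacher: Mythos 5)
Your proposal is correct and follows essentially the same route as the paper: both derive the frame relations \eqref{cstr} from the eigenspace definitions of $I_\zeta$ at $\zeta=0,\mathrm{i},-1$ and then verify the quaternion identities by composing the operators on the frame $\{h_X,\overline{h_X},v_X,\overline{v_X}\}$. Your sign bookkeeping (the coefficient $-1/\zeta$ at $\zeta=\mathrm{i},-1$, the rescaled limit defining $\mathcal{A}^{\zeta=0}$, and the reality of the $I_i$) checks out, so the computation reproduces \eqref{cstr} and the relations $I_1I_2=I_3$, $I_iI_j=-I_jI_i$ exactly as in the paper.
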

We remark that we can express the ``twistor" family of almost complex structures $I_{\zeta}$ for $\zeta\in \mathbb{C}\subset \mathbb{C}P^1$ in terms of the $I_i$ via the stereographic projection formula

\begin{equation}\label{twistorhol}
    I_{\zeta}=\frac{\mathrm{i}(-\zeta+\overline{\zeta})I_1-(\zeta+\overline{\zeta})I_2+(1-|\zeta|^2)I_3}{1+|\zeta|^2}\,.
\end{equation}

\begin{corollary}\label{hypercomplexprop}
    If $\mathcal{A}^{\zeta}$ is flat for all $\zeta\in \mathbb{C}^{\times}$ then the almost complex structures $I_j$ are integrable. In particular, $(N,I_1,I_2,I_3)$ is a hypercomplex manifold.
\end{corollary}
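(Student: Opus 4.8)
The plan is to invoke the Newlander--Nirenberg theorem: each almost complex structure $I_j$ on $N$ is integrable if and only if its $(0,1)$-distribution (equivalently its $(-\mathrm{i})$-eigendistribution) is involutive. By construction the $(-\mathrm{i})$-eigenspace of $I_\zeta$ is $\text{Im}(\mathcal{A}^\zeta)$, which is spanned by the local sections $\mathcal{A}^\zeta_{e_i}$ as $(e_i)$ ranges over a local frame of $TM\otimes\mathbb{C}\to M$. So integrability of $I_\zeta$ for $\zeta\in\{\mathrm{i},-1,0\}$ amounts to involutivity of $\text{Im}(\mathcal{A}^\zeta)$, i.e. to the flatness of $\mathcal{A}^\zeta$ as a complexified Ehresmann connection.

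First I would handle the cases $\zeta=\mathrm{i}$ and $\zeta=-1$, which are genuine points of $\mathbb{C}^\times$: here the hypothesis gives flatness of $\mathcal{A}^\zeta$ directly, so $I_1=I_{\mathrm{i}}$ and $I_2=I_{-1}$ are integrable with nothing further to check. Next I would deal with $\zeta=0$, where $\mathcal{A}^{\zeta=0}$ is \emph{not} a complexified Ehresmann connection, so one cannot simply quote the hypothesis. Instead I would note that the $(-\mathrm{i})$-eigendistribution of $I_3=I_0$ is still $\text{Im}(\mathcal{A}^{\zeta=0})=\text{span}\{-\overline{v_X},\,\overline{h_X}\}=\text{span}\{v_X,h_X\}$ (the latter using that this is a complex subbundle and $I_3 h_X=\mathrm{i}h_X$, $I_3 v_X=\mathrm{i}v_X$ from \eqref{cstr}), wait—more carefully, the $(-\mathrm{i})$-eigenspace of $I_3$ is spanned by $\overline{h_X}$ and $\overline{v_X}$, equivalently by $h_{\overline X}$ and $v$ applied appropriately; in any case it is the conjugate of $\text{span}\{h_X,v_X\}$. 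So it suffices to show $\text{span}\{h_X, v_X : X\in T^{1,0}M\}$ is involutive, and this is exactly the content of the three equations $[h_X,h_Y]=h_{[X,Y]}$, $[v_X,v_Y]=0$, $[h_X,v_Y]=0$ appearing in \eqref{flateq}, which hold under the flatness hypothesis. Hence $I_3$ is integrable. By Proposition \ref{almosthypercomplex} the $I_j$ already satisfy the quaternion relations, so $(N,I_1,I_2,I_3)$ is hypercomplex.

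The main subtlety to be careful about is the passage through $\zeta=0$ (and implicitly $\zeta=\infty$), since the rescaled limits $\mathcal{A}^{\zeta=0},\mathcal{A}^{\zeta=\infty}$ are not Ehresmann connections and Lemma \ref{Ehrflatlemma} does not literally apply to them; one must instead argue directly at the level of the eigendistribution, extracting from \eqref{flateq} precisely the bracket relations among $h$ and $v$ (not $\overline h,\overline v$) that encode involutivity of $\text{Im}(\mathcal{A}^{\zeta=0})$. A clean alternative, which I would mention, is to observe that $I_3$ is determined algebraically from $I_1$ and $I_2$ via $I_3=I_1 I_2$, and a standard fact (the Obata-type argument) says that if two of three quaternionically related almost complex structures are integrable then so is the third; then integrability of $I_3$ follows formally from that of $I_1$ and $I_2$ without re-examining $\zeta=0$. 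Either route closes the argument; the only real work is bookkeeping with \eqref{cstr} and \eqref{flateq}, which Lemma \ref{flatlemma} has already packaged.
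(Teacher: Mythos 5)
Your proof is correct and follows essentially the same route as the paper: integrability of $I_1$ and $I_2$ comes directly from flatness of $\mathcal{A}^{\mathrm{i}}$ and $\mathcal{A}^{-1}$, while integrability of $I_3$ is extracted from the bracket relations $[h_X,h_Y]=h_{[X,Y]}$, $[v_X,v_Y]=0$, $[h_X,v_Y]=0$ in \eqref{flateq}, which show the $\pm\mathrm{i}$-eigendistributions of $I_3$ are involutive. The paper's proof is just a terser version of this; your added care about the $\zeta=0$ limit not being an Ehresmann connection, and the Obata-type alternative, are both sound.
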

\begin{proof}
   The flatness of $\mathcal{A}^{\zeta}$ for all $\zeta \in \mathbb{C}^{\times}$ implies that the distribution of $-\mathrm{i}$-eigenspaces of $I_{\zeta}$ is involutive for $\zeta\in \mathbb{C}^{\times}$, so in particular $I_1=I_{\zeta=\mathrm{i}}$ and $I_2=I_{\zeta=-1}$ are integrable. On the other hand, by Lemma \ref{flatlemma} (in particular \eqref{flateq}) it follows that $I_3=I_{\zeta=0}$ is also integrable. 
    
\end{proof}

\subsection{Special Joyce structures and the associated hyperk\"ahler structure}

We take as starting point an affine special K\"{a}hler manifold $(M,I,\omega,\nabla)$ and let $N=TM$ with the canonical projection $\pi:N\to M$. In the following, we introduce several structures required to define special Joyce structures.

\subsubsection{The induced symplectic structure on the vertical bundle}\label{vertsympstructure}

We first describe the symplectic structure on the vector bundle $V_{\pi}\to N$ induced from the ASK structure on $M$. The same discussion holds when we consider the complexified bundle $V_{\pi}\otimes \mathbb{C}\to N$\,.\\

On $\pi:TM \to M$ we have a flat Ehresmann connection 
\begin{equation}
    \mathcal{H}: \pi^*(TM)\to TN
\end{equation}
induced from the flat and torsion-free connection $\nabla$ via the corresponding parallel transport (recall the end of Section \ref{complexEhressec}). In terms of affine special coordinates $(x^i,y_i)$ on $M$ (which in particular are flat with respect to $\nabla$) and the induced coordinates $(x^i,y_i,\varphi^i,\varphi_i)$ on $N=TM$, it follows from \eqref{inducedconn} that
\begin{equation}\label{flatlift}
    \mathcal{H}_{\frac{\partial}{\partial x^i}}=\frac{\partial}{\partial x^i}, \quad \mathcal{H}_{\frac{\partial}{\partial y_i}}=\frac{\partial}{\partial y_i}\,.
\end{equation}
If we consider a conjugate system of holomorphic special coordinates $(Z^i)$ and $(Z_i)$ inducing the affine special coordinates $(x^i,y_i)$ (recall Lemma \ref{coordlemma}), we conclude from \eqref{Zexp} and \eqref{flatlift} that
\begin{equation}\label{hholspecial}
    \mathcal{H}_{\frac{\partial}{\partial Z^i}}=\frac{\partial}{\partial Z^i}\,.
\end{equation}
Furthermore, we have the canonical identification of vector bundles over $N$
\begin{equation}
    \nu:\pi^*(TM)\to V_{\pi}=\text{Ker}(\mathrm{d}\pi)\,
\end{equation}
given by
\begin{equation}
    \nu(V_p,W_p)=\frac{\mathrm{d}}{\mathrm{d}t}(V_p+tW_p)\big|_{t=0}\in V_{\pi}|_{V_p}\,.
\end{equation}
We will use the same notation that we use for the evaluation of Ehresmann connections and for $X\in \pi^*(TM)$ denote $\nu_X:=\nu(X)$. As in the case of Ehresmann connections, we will also sometimes abuse notation and evaluate $\nu$ on local sections of $TM \to M$, with the understanding that we evaluate it on the canonical pullback section. In terms of the above coordinates $(x^i,y_i,\varphi^i,\varphi_i)$ on $N$, we have 
\begin{equation}
    \nu_{\frac{\partial}{\partial x^i}}=\frac{\partial}{\partial \varphi^i}, \quad \nu_{\frac{\partial}{\partial y_i}}=\frac{\partial}{\partial \varphi_i}\,,
\end{equation}
while for future reference, we note that from \eqref{Zexp} we find that the complex linear extension of $\nu$ satisfies
\begin{equation}\label{nuholspecial}
    \nu_{\frac{\partial}{\partial Z^i}}=\frac{1}{2}\left(\frac{\partial}{\partial \varphi^i}-\tau_{ij}\frac{\partial}{\partial \varphi_j}\right)\,.
\end{equation}
We now use $\nu$ to induce from $\omega$ a  symplectic structure $\omega^{\nu}$ on the vector bundle $V_{\pi}\to N$. More precisely, letting $p_2:\pi^*(TM)\to TM$ denote the projection into the second factor $p_2(X_p,W_p)=W_p$, and given $V_1,V_2\in V_{\pi}|_{X_p}$ where $X_p\in T_pM\subset N$ we define 
\begin{equation}\label{vertomega}
\omega^{\nu}_{X_p}(V_1,V_2):=\omega_p(p_2( \nu^{-1}(V_1)),p_2(\nu^{-1}(V_2)))\,.    
\end{equation}
In terms of affine special coordinates $(x^i,y_i)$ for $(M,I,\omega,\nabla)$ where

\begin{equation}\label{omegadarboux}
    \omega=\mathrm{d}x^i\wedge \mathrm{d}y_i\,,
\end{equation}
one easily checks that with respect to the induced coordinates $(x^i,y_i,\varphi^i,\varphi_i)$ on $N$,  we can write
\begin{equation}\label{omegavertdarboux}
    \omega^{\nu}=\mathrm{d}\varphi^i\wedge \mathrm{d}\varphi_i\,.
\end{equation}

In what follows, whenever we consider Hamiltonian vector fields (denoted by $\text{Ham}_f$) or Poisson brackets (denoted by $\{-,-\}$), we do with respect to the symplectic structure $\omega^{\nu}$ on $V_{\pi}\to N$. Namely, for $V\in V_{\pi}\subset TN$ and $f$,$g$ functions on $N$ we let 
\begin{equation}\label{usefulexp2}
    Vf=\omega^{\nu}(V,\text{Ham}_f), \quad \{f,g\}=\omega^{\nu}(\text{Ham}_f,\text{Ham}_g)\,.
\end{equation}
More concretely, in terms of the coordinates $(x^i,y_i,\varphi^i,\varphi_i)$ on $N$ from before 
\begin{equation}\label{usefulexp}
    \text{Ham}_f=\frac{\partial f}{\partial \varphi^i}\frac{\partial}{\partial \varphi_i}-\frac{\partial f}{\partial \varphi_i}\frac{\partial}{\partial \varphi^i}\,, \quad \{f,g\}=\frac{\partial f}{\partial \varphi^i}\frac{\partial g}{\partial \varphi_i}-\frac{\partial f}{\partial \varphi_i}\frac{\partial g}{\partial \varphi^i}\,.
\end{equation}
We will also frequently use the relation
\begin{equation}\label{LiePoissonbracket}
    [\text{Ham}_f,\text{Ham}_g]=\text{Ham}_{\{f,g\}}\,.
\end{equation}

Finally, given $X\in \pi^*(TM)|_{V_p}$ and a function $f$ on $N$, we will frequently write
\begin{equation}
    \text{Ham}_{\mathcal{H}_Xf}, \quad \text{Ham}_{\nu_Xf}\in V_{\pi}|_{V_p}\,.
\end{equation}
Since $\mathcal{H}_Xf$ and $\nu_Xf$ are just numbers, we clarify what we mean by the above expressions. First note that $X$ canonically extends to a section $\hat{X}$ of $p_1:\pi^*(TM)|_{T_pM}\to T_pM$, where $p_1:\pi^*(TM)\to TM$ is the canonical projection in the first factor. Indeed, if $X=(V_p,W_p)$, then we have
\begin{equation}
    \hat{X}_{Z_p}=(Z_p,W_p), \quad Z_p\in T_pM\,.
\end{equation}
We then have that $\mathcal{H}_{\hat{X}}f$ and $\nu_{\hat{X}}f$ are functions on $T_pM$, and hence we can compute the Hamiltonian vector fields $\text{Ham}_{\mathcal{H}_{\hat{X}}f}$, and  $\text{Ham}_{\nu_{\hat{X}}f}$ on $T_pM$. We then set 
\begin{equation}\label{hamnot}
    \text{Ham}_{\mathcal{H}_Xf}:=\text{Ham}_{\mathcal{H}_{\hat{X}}f}|_{V_p}, \quad \text{Ham}_{\nu_Xf}:=\text{Ham}_{\nu_{\hat{X}}f}|_{V_p}\,.
\end{equation}
Again, in terms of the coordinates $(x^i,y_i,\varphi^i,\varphi_i)$ from above, if 
\begin{equation}
    X=\left(V_p\;,\;W^i\frac{\partial}{\partial x^i}\bigg|_{p}+W_i\frac{\partial}{\partial y_i}\bigg|_{p}\right) 
\end{equation}
then
\begin{equation}\label{verthamexp}
\begin{split}
    \text{Ham}_{\mathcal{H}_Xf}&=W^i\left(\frac{\partial^2f}{\partial \varphi^j\partial x^i}(V_p)\frac{\partial}{\partial \varphi_j}\bigg|_{V_p}-\frac{\partial^2f}{\partial \varphi_j\partial x^i}(V_p)\frac{\partial}{\partial \varphi^j}\bigg|_{V_p}\right)\\
    &\quad +W_i\left(\frac{\partial^2f}{\partial \varphi^j\partial y_i}(V_p)\frac{\partial}{\partial \varphi_j}\bigg|_{V_p}-\frac{\partial^2f}{\partial \varphi_j\partial y_i}(V_p)\frac{\partial}{\partial \varphi^j}\bigg|_{V_p}\right)\\
\text{Ham}_{\nu_Xf}&=W^i\left(\frac{\partial^2f}{\partial \varphi^j\partial \varphi^i}(V_p)\frac{\partial}{\partial \varphi_j}\bigg|_{V_p}-\frac{\partial^2f}{\partial \varphi_j\partial \varphi^i}(V_p)\frac{\partial}{\partial \varphi^j}\bigg|_{V_p}\right)\\
    &\quad +W_i\left(\frac{\partial^2f}{\partial \varphi^j\partial \varphi_i}(V_p)\frac{\partial}{\partial \varphi_j}\bigg|_{V_p}-\frac{\partial^2f}{\partial \varphi_j\partial \varphi_i}(V_p)\frac{\partial}{\partial \varphi^j}\bigg|_{V_p}\right)\,.\\
\end{split}
\end{equation}
Note that the above quantities only depend on the vector $X\in \pi^*(TM)|_{V_p}$ instead of $\hat{X}$, which justifies the notation in \eqref{hamnot}.
\subsubsection{Special Joyce structures}

We now specialize the family $\mathcal{A}^{\zeta}$ of complexified Ehresmann connections given in \eqref{Ehrcongen}. The particular form of the maps $h$ and $v$ defined in \eqref{hdef} and \eqref{vdef} is motivated by trying to generalize what happens in our main example in Section \ref{uncoupled}.
\begin{definition}
    Given an ASK manifold $(M,g,\omega,\nabla)$, an almost special Joyce structure over $M$ is the data of a family of complexified Ehresmann connections $\mathcal{A}^{\zeta}$ of the form \eqref{Ehrcongen} such that $h$ and $v$ satisfy \eqref{non-deg}, and such that for some $J:N\to \mathbb{C}$ we have\footnote{Recall that $v$ is complex anti-linear and the discussion in Section \ref{vertsympstructure} regarding the expressions $\text{Ham}_{\mathcal{H}_XJ}$ and $\text{Ham}_{\nu_{\overline{X}}J}$.} 
    \begin{equation}\label{hvdef}
    h_X=\mathcal{H}_X+\text{Ham}_{\mathcal{H}_XJ}, \quad v_X=2\pi \mathrm{i}(\nu_{\overline{X}}+\text{Ham}_{\nu_{\overline{X}}J}), \quad X\in \pi^*(T^{1,0}M)\,\,.
\end{equation}
Namely, the family $\mathcal{A}^{\zeta}$ is given by 
    \begin{equation}\label{famconn}
\begin{split}
\mathcal{A}_{X}^{\zeta}&=\mathcal{H}_X+\text{Ham}_{\mathcal{H}_XJ}+\frac{2\pi\mathrm{i} }{\zeta}\left(\nu_{X} + \text{Ham}_{\nu_{X}\overline{J}}\right)\\
\mathcal{A}_{\overline{X}}^{\zeta}&=\mathcal{H}_{\overline{X}}+\text{Ham}_{\mathcal{H}_{\overline{X}}\overline{J}}+2\pi\mathrm{i}\zeta\left(\nu_{\overline{X}} +\text{Ham}_{\nu_{\overline{X}}J}\right)\,.
\end{split}
\end{equation}

\end{definition}
The expressions \eqref{famconn} should be compared with the analogous but simpler expression in \cite[Equation 40]{BridgeJoyce}. There, if $(\mathcal{M},\Omega, \nabla)$ is a holomorphic symplectic manifold with a compatible flat, torsion-free connection $\nabla$, the relevant family of (holomorphic) Ehresmann connections $\widetilde{\mathcal{A}}^{\epsilon}$ on $\pi:T^{1,0}\mathcal{M}\to \mathcal{M}$, $\epsilon \in \mathbb{C}^{\times}$, has the form 
\begin{equation}\label{bridgelandpencil}
    \widetilde{\mathcal{A}}^{\epsilon}_X=\mathcal{H}_X +\text{Ham}_{\nu_X\widetilde{J}} + \epsilon^{-1}\nu_X\,, \quad X\in \pi^*(T^{1,0}\mathcal{M})\,,
\end{equation}
where now $\mathcal{H}$ is a flat holomorphic Ehresmann connection on $T\mathcal{M}\to \mathcal{M}$ induced from $\nabla$, $\widetilde{J}$ is a holomorphic function on $T\mathcal{M}$, and $\text{Ham}_{\nu_X\widetilde{J}}$ is computed again by an induced symplectic structure on the vertical bundle $\mathcal{V}_{\pi}\to T^{1,0}\mathcal{\mathcal{M}}$. Note in particular that $\widetilde{\mathcal{A}}^{\epsilon}$ is only defined for $X\in \pi^*(T^{1,0}\mathcal{M})$ and not for $\overline{X}\in \pi^*(T^{0,1}\mathcal{M})$\,.  

\begin{remark}\label{ambrem}
   Note that there are several functions $J:N\to \mathbb{C}$ specifying the same almost special Joyce structure $\mathcal{A}^{\zeta}$. In fact, it is clear for \eqref{famconn} that if $J$ specifies $\mathcal{A}^{\zeta}$, then all other functions specifying $\mathcal{A}^{\zeta}$ have the form $J+f$ where $f\in C^{\infty}(N,\mathbb{C})$ satisfies that
   \begin{equation}
    \text{Ham}_{\mathcal{H}_{X}f}=\text{Ham}_{\mathcal{H}_{\overline{X}}f}=\text{Ham}_{\nu_Xf}=\text{Ham}_{\nu_{\overline{X}}f}=0\,, \quad \text{for all} \quad X\in \pi^*(T^{1,0}M)\,.
   \end{equation}
   One easily checks using \eqref{verthamexp} that in terms of the coordinates $(x^i,y_i,\varphi^i,\varphi_i)$ on $N$ from above, such a function must have the local form
   \begin{equation}
f(x^i,y_i,\varphi^i,\varphi_i)=c_i\varphi^i+c^i\varphi_i + g(x^i,y_i), \quad \text{for some} \quad c_i,c^i\in \mathbb{C}\,.
   \end{equation}
   As in \cite{BridgeJoyce}, one could impose certain symmetries on $J$ to fix the above freedom. For example, in our examples in Section \ref{examplesec}, $J$ is invariant under the involution $\iota:N\to N$ given by $\iota(V_p)=-V_p$, which reduces the freedom of choosing $J$ to adding the pullback of a function from the base. In \cite{BridgeJoyce} a similar symmetry is imposed, where the corresponding $\widetilde{J}$ is odd under the involution $\iota$. 
\end{remark}

It will be convenient to write in detail the flatness condition for the family $\mathcal{A}^{\zeta}$ in the particular case of an almost special Joyce structure.

\begin{proposition}\label{flatspecial}
Consider an almost special Joyce structure $\mathcal{A}^{\zeta}$ over an ASK manifold $M$. Furthermore, let $J:N\to \mathbb{C}$ be any function describing $\mathcal{A}^{\zeta}$ as in \eqref{famconn}. Then the family of complexified Ehresmann connections $\mathcal{A}^{\zeta}$ is flat for all $\zeta\in \mathbb{C}^{\times}$ if and only if the following holds for all local holomorphic sections $X,Y$ of  $T^{1,0}M\to M$\footnote{Recall that in the expressions below we mean the evaluation of $\mathcal{H}$ and $\nu$ in the corresponding pullback local sections of $\pi^*(T^{1,0}M)\to N$}:
\begin{itemize}
\item The following local functions on $N=TM$ descend to the base $M$
    \begin{equation}\label{Plebanski-add}
    \begin{split}
        &\{\mathcal{H}_XJ,\mathcal{H}_YJ\},\quad    \{\nu_{\overline{X}}J,\nu_{\overline{Y}}J\}\,,\quad \{\mathcal{H}_XJ,\nu_{\overline{Y}}J\}\,.\\
    \end{split}
    \end{equation}
\item The following equations are satisfied, up to addition of functions that descend to the base $M$
\begin{equation}\label{Plebanski-like}
\begin{split}
    \nu_X(\mathcal{H}_{Y}(J-\overline{J}))-\nu_Y(\mathcal{H}_{X}(J-\overline{J}))&=\{\nu_Y\overline{J},\mathcal{H}_XJ\}-\{\nu_X\overline{J},\mathcal{H}_YJ\}\,\\
    \mathcal{H}_X(\mathcal{H}_{\overline{Y}}(J-\overline{J}))+4\pi^2 \cdot \nu_X(\nu_{\overline{Y}}(J-\overline{J}))&=\{\mathcal{H}_XJ,\mathcal{H}_{\overline{Y}}\overline{J}\} -4\pi^2 \{\nu_X\overline{J},\nu_{\overline{Y}}J\}\,.
\end{split}
\end{equation}
\end{itemize}
\end{proposition}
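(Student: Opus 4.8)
The plan is to unwind the flatness criterion for the family $\mathcal{A}^\zeta$ from Lemma \ref{flatlemma}, and then substitute the explicit form of $h$ and $v$ from \eqref{hvdef}. So first I would take the five equations \eqref{flateq}, namely
\begin{equation*}
[h_X,h_Y]=h_{[X,Y]},\quad [h_X,\overline{v_Y}]+[\overline{v_X},h_Y]=\overline{v_{[X,Y]}},\quad [v_X,v_Y]=0,\quad [h_X,\overline{h_Y}]=[\overline{v_X},v_Y],\quad [h_X,v_Y]=0,
\end{equation*}
which hold for $X,Y$ local holomorphic sections of $T^{1,0}M\to M$, and expand each bracket using $h_X=\mathcal{H}_X+\mathrm{Ham}_{\mathcal{H}_XJ}$ and $v_X=2\pi\mathrm{i}(\nu_{\overline X}+\mathrm{Ham}_{\nu_{\overline X}J})$. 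The main computational ingredients here are: (i) $[\mathcal{H}_X,\mathcal{H}_Y]=\mathcal{H}_{[X,Y]}$ since $\mathcal{H}$ is the flat Ehresmann connection induced by the flat torsion-free $\nabla$ (equation \eqref{hholspecial} shows $\mathcal{H}_{\partial/\partial Z^i}=\partial/\partial Z^i$, so these brackets vanish in special coordinates); (ii) $[\nu_X,\nu_Y]=0$ and $[\mathcal{H}_X,\nu_Y]=\nu_{[X,Y]}$, again visible from the coordinate expressions \eqref{nuholspecial}; (iii) the Leibniz-type identity $[\mathcal{H}_X,\mathrm{Ham}_g]=\mathrm{Ham}_{\mathcal{H}_Xg}$ and $[\nu_X,\mathrm{Ham}_g]=\mathrm{Ham}_{\nu_Xg}$, valid because $\mathcal{H}_X$ and $\nu_X$ are vector fields commuting with the projection structure that preserve $\omega^\nu$ (in coordinates $\mathcal{H}$ and $\nu$ act only via base or $\varphi$-linear directions and $\omega^\nu=\mathrm{d}\varphi^i\wedge\mathrm{d}\varphi_i$ has constant coefficients); and (iv) $[\mathrm{Ham}_f,\mathrm{Ham}_g]=\mathrm{Ham}_{\{f,g\}}$ from \eqref{LiePoissonbracket}.

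Second, I would process each of the five equations in turn. The equation $[h_X,h_Y]=h_{[X,Y]}$ becomes $\mathcal{H}_{[X,Y]}+[\mathcal{H}_X,\mathrm{Ham}_{\mathcal{H}_YJ}]-[\mathcal{H}_Y,\mathrm{Ham}_{\mathcal{H}_XJ}]+[\mathrm{Ham}_{\mathcal{H}_XJ},\mathrm{Ham}_{\mathcal{H}_YJ}]=\mathcal{H}_{[X,Y]}+\mathrm{Ham}_{\mathcal{H}_{[X,Y]}J}$; using the identities above this reduces to $\mathrm{Ham}_{\mathcal{H}_X\mathcal{H}_YJ-\mathcal{H}_Y\mathcal{H}_XJ-\mathcal{H}_{[X,Y]}J+\{\mathcal{H}_XJ,\mathcal{H}_YJ\}}=0$. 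Since $\mathrm{Ham}_f=0$ iff $f$ is constant along the fibers, i.e. iff $f$ descends to $M$, and since $\mathcal{H}_X\mathcal{H}_YJ-\mathcal{H}_Y\mathcal{H}_XJ-\mathcal{H}_{[X,Y]}J=[\mathcal{H}_X,\mathcal{H}_Y]J-\mathcal{H}_{[X,Y]}J=0$ (flatness of $\mathcal{H}$), this says precisely $\{\mathcal{H}_XJ,\mathcal{H}_YJ\}$ descends to $M$ — the first item of \eqref{Plebanski-add}. The same manipulation applied to $[v_X,v_Y]=0$ yields that $\{\nu_{\overline X}J,\nu_{\overline Y}J\}$ descends (the factor $4\pi^2\mathrm{i}^2$ is a nonzero constant and drops out), and $[h_X,\overline{v_Y}]+[\overline{v_X},h_Y]=\overline{v_{[X,Y]}}$ — after noting $\overline{v_Y}=-2\pi\mathrm{i}(\nu_Y+\mathrm{Ham}_{\nu_Y\overline J})$ and that $\mathcal{H}$ and $\nu$ respect the $(1,0)/(0,1)$ decomposition via the special coordinates — gives, for the parts landing in $\mathrm{Im}(\overline v)$ vs the fiber-constant directions, the first equation of \eqref{Plebanski-like} together with the mixed descent statement $\{\mathcal{H}_XJ,\nu_{\overline Y}J\}$ descends to $M$. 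The condition $[h_X,v_Y]=0$ is handled similarly and, after using holomorphicity ($[X,\overline Y]=0$, so $[\mathcal{H}_X,\nu_{\overline Y}]=0$ and $\mathcal{H}_X\nu_{\overline Y}J-\nu_{\overline Y}\mathcal{H}_XJ=0$), contributes no new constraint beyond what is already listed (it is the conjugate/mixed counterpart absorbed into the descent statements). Finally $[h_X,\overline{h_Y}]=[\overline{v_X},v_Y]$ expands to the second equation of \eqref{Plebanski-like}: the left side contributes $[\mathcal{H}_X,\overline{\mathcal{H}_Y}]$ (a curvature-type term that I expect to vanish because $\mathcal H$ is genuinely flat as a real connection — worth double-checking against the remark preceding this proposition which warns $h$ itself need not be flat) plus Hamiltonian terms $\mathrm{Ham}_{\mathcal{H}_X\mathcal{H}_{\overline Y}\overline J}+\ldots-\{\mathcal{H}_XJ,\mathcal{H}_{\overline Y}\overline J\}$, the right side $4\pi^2$ times $\mathrm{Ham}_{\nu_X\nu_{\overline Y}(\ldots)}-\{\nu_X\overline J,\nu_{\overline Y}J\}$, and equating $\mathrm{Ham}$-arguments up to a fiber-constant function gives the stated PDE.

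The main obstacle I anticipate is bookkeeping the $(1,0)$ versus $(0,1)$ projections carefully: $h$ is defined on $\pi^*(T^{1,0}M)$ and $\overline{h_Y}$, $v_X$, $\overline{v_X}$ live in the other pieces of the decomposition \eqref{non-deg}, so when I expand a bracket like $[h_X,\overline{v_Y}]$ I must track which summand of $TN\otimes\mathbb{C}=\mathrm{Im}(h)\oplus\mathrm{Im}(\overline h)\oplus\mathrm{Im}(v)\oplus\mathrm{Im}(\overline v)$ each term lands in, and recognize that "$\mathrm{Ham}_f=0$" and "$f$ descends to $M$" are equivalent only because $\omega^\nu$ is fiberwise-nondegenerate — hence the constraints split into the pure-descent statements \eqref{Plebanski-add} and the PDEs \eqref{Plebanski-like} modulo base functions. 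A secondary subtlety is the precise meaning of $\mathrm{Ham}_{\mathcal{H}_XJ}$ and $\mathrm{Ham}_{\nu_{\overline X}J}$ via the pointwise extension $\hat X$ explained around \eqref{hamnot}, and verifying that the Leibniz identities (iii) above genuinely hold for these operators — this is the one place a short explicit coordinate computation using \eqref{verthamexp} and \eqref{usefulexp} is unavoidable. Once these identities are in hand, the proposition follows by collecting terms; I would present the computation in conjugate holomorphic special coordinates $(Z^i)$ where $\mathcal{H}_{\partial/\partial Z^i}=\partial/\partial Z^i$ and $\nu_{\partial/\partial Z^i}=\tfrac12(\partial_{\varphi^i}-\tau_{ij}\partial_{\varphi_j})$, which makes all the flatness-of-$\mathcal{H}$-and-$\nu$ brackets manifestly zero and isolates exactly the Hamiltonian contributions.
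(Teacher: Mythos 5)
Your overall strategy is exactly the paper's: feed the five equations of Lemma \ref{flatlemma} through the explicit formulas \eqref{hvdef}, use $[\text{Ham}_f,\text{Ham}_g]=\text{Ham}_{\{f,g\}}$ together with the Leibniz identities $[\mathcal{H}_X,\text{Ham}_g]=\text{Ham}_{\mathcal{H}_Xg}$ and $[\nu_X,\text{Ham}_g]=\text{Ham}_{\nu_Xg}$, and read off the constraints via ``$\text{Ham}_f=0$ iff $f$ descends to $M$''. The final list of constraints you produce is the correct one.

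There is, however, one false ingredient in your list: the identity $[\mathcal{H}_X,\nu_Y]=\nu_{[X,Y]}$ in item (ii) does not hold. In conjugate holomorphic special coordinates, $\mathcal{H}_{\partial/\partial Z^i}=\partial_{Z^i}$ while $\nu_{\partial/\partial Z^j}=\tfrac12(\partial_{\varphi^j}-\tau_{jk}\partial_{\varphi_k})$, so
\[
[\mathcal{H}_{\partial/\partial Z^i},\nu_{\partial/\partial Z^j}]=-\tfrac12\,\frac{\partial\tau_{jk}}{\partial Z^i}\,\frac{\partial}{\partial\varphi_k}\;\neq\;0=\nu_{[\partial_{Z^i},\partial_{Z^j}]}\,.
\]
What is true --- and what the paper establishes using the special-geometry symmetry $\partial_{Z^i}\tau_{jk}=\partial_{Z^j}\tau_{ik}$ coming from the prepotential --- is only the antisymmetrized identity $[\mathcal{H}_X,\nu_Y]-[\mathcal{H}_Y,\nu_X]=\nu_{[X,Y]}$. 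You get away with this because in the flatness equations this bracket only ever enters antisymmetrized (namely in $[h_X,\overline{v_Y}]+[\overline{v_X},h_Y]=\overline{v_{[X,Y]}}$), but as stated your identity would wrongly discard the nonzero fiberwise-linear Hamiltonian term above in each individual bracket. Relatedly, you attribute the descent of $\{\mathcal{H}_XJ,\nu_{\overline Y}J\}$ to the equation $[h_X,\overline{v_Y}]+[\overline{v_X},h_Y]=\overline{v_{[X,Y]}}$ and then declare $[h_X,v_Y]=0$ to contribute nothing new; this is backwards. The former equation only produces Poisson brackets of the form $\{\mathcal{H}_XJ,\nu_Y\overline J\}$ (which belong to the right-hand side of the first equation of \eqref{Plebanski-like}), while it is precisely $[h_X,v_Y]=0$, combined with $[\mathcal{H}_X,\nu_{\overline Y}]=0$, that forces $\{\mathcal{H}_XJ,\nu_{\overline Y}J\}$ to descend. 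Neither slip changes the final statement, but both would need to be repaired in a written-out proof.
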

\begin{proof}
In what follows, we translate the flatness equations obtained in Lemma \ref{flatlemma} to the specific case of an almost special Joyce structure.\\

Using the flatness of $\mathcal{H}$ and \eqref{LiePoissonbracket}, it easily follows that 
\begin{equation}
[h_X,h_Y]=h_{[X,Y]}\iff [\text{Ham}_{\mathcal{H}_XJ},\text{Ham}_{\mathcal{H}_YJ}]=\text{Ham}_{\{\mathcal{H}_XJ,\mathcal{H}_YJ\}}=0\,.
\end{equation}
This in turn implies that $\{\mathcal{H}_XJ,\mathcal{H}_YJ\}$ must descend to a function on the base, since the Hamiltonian vector fields are taken with respect to the induced vertical symplectic structure.\\

On the other hand, using $[\nu_{\overline{X}},\nu_{\overline{Y}}]=0$, equation \eqref{LiePoissonbracket} and 
\begin{equation}
    [\nu_{\overline{X}},\text{Ham}_{\nu_{\overline{Y}}J}]-[\nu_{\overline{Y}},\text{Ham}_{\nu_{\overline{X}}J}], =\text{Ham}_{[\nu_{\overline{X}},\nu_{\overline{Y}}]J}=0
\end{equation}
it follows that
\begin{equation}
    [v_X,v_Y]=0 \iff \text{Ham}_{\{\nu_{\overline{X}}J,\nu_{\overline{Y}}J\}}=0
\end{equation}
so $\{\nu_{\overline{X}}J,\nu_{\overline{Y}}J\}$ must descend to a function on the base. \\

Now, if $(Z_i)$ and $(Z^i)$ are a conjugate system of holomorphic special coordinates on $M$, we have the relation
\begin{equation}
    \mathrm{d}Z_i=\tau_{ij}\mathrm{d}Z^j
\end{equation}
for holomorphic functions $\tau_{ij}$ on $M$ symmetric in $i$ and $j$ (recall Lemma \ref{usualsKident}). In particular, by taking exterior derivatives we obtain the identity
\begin{equation}
    \frac{\partial \tau_{ik}}{\partial Z^j}=\frac{\partial \tau_{jk}}{\partial Z^i}\,.
\end{equation}
Using the above, one can verify using \eqref{nuholspecial} and \eqref{hholspecial} that  
\begin{equation}\label{vcom}
    [\mathcal{H}_X,\nu_Y]-[\mathcal{H}_Y,\nu_X]=\nu_{[X,Y]}\,.
\end{equation}
Using the previous equation, we see that 
\begin{equation}
    [h_X,\overline{v_Y}]+[\overline{v_X},h_Y]=\overline{v_{[X,Y]}}
\end{equation}
reduces to 
\begin{equation}
    [\mathcal{H}_X,\text{Ham}_{\nu_Y\overline{J}}]-[\mathcal{H}_Y,\text{Ham}_{\nu_X{\overline{J}}}]+[\nu_{X} + \text{Ham}_{\nu_{X}\overline{J}},\text{Ham}_{\mathcal{H}_YJ}]-[\nu_Y+\text{Ham}_{\nu_{Y}\overline{J}},\text{Ham}_{\mathcal{H}_XJ}]=\text{Ham}_{\nu_{[X,Y]}{\overline{J}}}\;.
\end{equation}
Which can be rewritten using \eqref{vcom} and 
\begin{equation}
[\nu_X,\text{Ham}_{\mathcal{H}_Y\overline{J}}]=\text{Ham}_{\nu_X(\mathcal{H}_Y\overline{J})}, \quad [\mathcal{H}_X,\text{Ham}_{\nu_{Y}}\overline{J}]=\text{Ham}_{\mathcal{H}_X(\nu_Y\overline{J})}
\end{equation}
as 
\begin{equation}
    \text{Ham}_{\nu_X(\mathcal{H}_Y(J-\overline{J}))-\nu_Y(\mathcal{H}_X(J-\overline{J}))}=\text{Ham}_{\{\nu_Y\overline{J},\mathcal{H}_XJ\}-\{\nu_X\overline{J},\mathcal{H}_YJ\}}\,.\end{equation}
We then obtain that the following equality holds, up to additions of functions descending to the base $M$
\begin{equation}
    \nu_X(\mathcal{H}_{Y}(J-\overline{J}))-\nu_Y(\mathcal{H}_{X}(J-\overline{J}))=\{\nu_Y\overline{J},\mathcal{H}_XJ\}-\{\nu_X\overline{J},\mathcal{H}_YJ\}\,.
\end{equation}
Continuing with the equations from Lemma \ref{flatlemma}, using $[\mathcal{H}_X,\nu_{\overline{Y}}]=0$,  the equation
\begin{equation}
    [h_X,v_Y]=0\,,
\end{equation}
similarly gives that $\{\mathcal{H}_XJ,\nu_{\overline{Y}}J\}$ should descend to a function on the base.\\

Finally, using that $[\mathcal{H}_{X},\mathcal{H}_{\overline{Y}}]=0$ (since $X$ and $Y$ are holomorphic and $\mathcal{H}$ is flat) and $[\nu_X,\nu_{\overline{Y}}]=0$, the equation 
\begin{equation}
    [h_X,\overline{h_Y}]=[\overline{v_X},v_Y]
\end{equation}
reduces to the condition that, up to the addition of functions descending to the base $M$, 
\begin{equation}
\mathcal{H}_X(\mathcal{H}_{\overline{Y}}(J-\overline{J}))+4\pi^2 \cdot \nu_X(\nu_{\overline{Y}}(J-\overline{J}))=\{\mathcal{H}_XJ,\mathcal{H}_{\overline{Y}}\overline{J}\} -4\pi^2 \{\nu_X\overline{J},\nu_{\overline{Y}}J\}\,.
\end{equation}
\end{proof}

\begin{remark}\leavevmode
\begin{itemize}
    \item The equations \eqref{Plebanski-like} should be compared to the Pleba\'nski equations encoding the flatness of the family of holomorphic Ehresmann connections $\widetilde{\mathcal{A}}^{\epsilon}$ of \cite{BridgeJoyce}. Following the notation of \eqref{bridgelandpencil}, the flatness of $\widetilde{\mathcal{A}}^{\epsilon}$ reduces to imposing that the holomorphic function $\widetilde{J}$ should satisfy the following equation for local holomorphic sections $X,Y$ of $T^{1,0}\mathcal{M}\to \mathcal{M}$ up to addition of functions descending to the base $\mathcal{M}$ (see \cite[Equation 13]{BriStr})
    \begin{equation}\label{Plebanskieq}
        \nu_X(\mathcal{H}_Y\widetilde{J})-\nu_Y(\mathcal{H}_X\widetilde{J})=\{\nu_X\widetilde{J},\nu_Y\widetilde{J}\}\,.
    \end{equation}
In the above setting, $\widetilde{J}$ can always be redefined so that \eqref{Plebanskieq} is satisfied exactly (see below \cite[Equation 14]{BridgeJoyce}). To write \eqref{Plebanskieq} in coordinates, it is convenient to choose flat holomorphic Darboux coordinates $(Z^i,Z_i)$ on $\mathcal{M}$ and $(Z^i,Z_i,\varphi^i,\varphi_i)$ the induced holomorphic coordinates on $T^{1,0}\mathcal{M}$. Note that the coordinates $\varphi^i$, $\varphi_i$ are complex in this case. With respect to the above coordinates
\begin{equation}
\mathcal{H}_{\frac{\partial}{\partial Z^i}}=\frac{\partial}{\partial Z^i}, \quad \mathcal{H}_{\frac{\partial}{\partial Z_i}}=\frac{\partial}{\partial Z_i}, \quad \nu_{\frac{\partial}{\partial Z^i}}=\frac{\partial}{\partial \varphi^i}, \quad \nu_{\frac{\partial}{\partial Z_i}}=\frac{\partial}{\partial \varphi_i}\,,
\end{equation}
while the Poisson bracket is given by the induced vertical symplectic form $\mathrm{d}\varphi^i\wedge \mathrm{d}\varphi_i$. 
\item Similarly, if one wishes to write the equations in Proposition \ref{flatspecial} in coordinates, a convenient set of coordinates on $TM$ is given by $(x^i,y_i,\varphi^i,\varphi_i)$ as in the beginning of Section \ref{vertsympstructure}. Namely, $(x^i,y_i)$ are affine special coordinates on $M$ and $(x^i,y_i,\varphi^i,\varphi_i)$ the induced coordinates on $TM$. If $(Z^i)$ and $(Z_i)$ denote conjugate systems of holomorphic special coordinates on $M$ associated to $(x^i,y_i)$, then we have
\begin{equation}\label{coord}
    \mathcal{H}_{\frac{\partial}{\partial Z^i}}=\frac{\partial}{\partial Z^i}=\frac{1}{2}\left(\frac{\partial}{\partial x^i}-\tau_{ij}\frac{\partial}{\partial y_j}\right), \quad \nu_{\frac{\partial}{\partial Z^i}}=\frac{1}{2}\left(\frac{\partial}{\partial \varphi^i}-\tau_{ij}\frac{\partial}{\partial \varphi_j}\right), \quad \mathrm{d}Z_i=\tau_{ij}\mathrm{d}Z^j\,.
\end{equation}
Using \eqref{coord} together with \eqref{usefulexp} one can write explicitly the equations in 
Proposition \ref{flatspecial} in coordinates by taking $X=\partial_{Z^i}$, $Y=\partial_{Z^j}$ as holomorphic sections of $T^{1,0}M\to M$. Note that contrary to the case of \eqref{Plebanskieq}, there is a special geometry relation imposed in \eqref{coord}, and the coordinates $\varphi^i$, $\varphi_i$ are real. 
\end{itemize}
\end{remark}

Assume now that we have an almost special Joyce structure $\mathcal{A}^{\zeta}$ over $M$ such that the family $\mathcal{A}^{\zeta}$ is flat for all $\zeta\in \mathbb{C}^{\times}$. By Corollary \ref{hypercomplexprop} we obtain a hypercomplex structure $(N,I_1,I_2,I_3)$ on $N$. We now want to extend this hypercomplex structure to a particular hyperhermitian structure. That is, a tuple $(g,I_1,I_2,I_3)$ where $g$ is a (pseudo)-Riemannian metric, and $I_i$ are  complex structures satisfying the quaternionic relations and preserving $g$:
\begin{equation}
    g(I_i-,I_i-)=g(-,-)\,.
\end{equation}
For this, we will require the following lemma, which will also motivate the definition of special Joyce structure. 
\begin{lemma}\label{11obstruction}
    Consider the almost complex structure $I_3$ induced on $N$ by an almost special Joyce structure $\mathcal{A}^{\zeta}$. Then the almost complex structure $I_3$ on $N$ restricts to an almost complex structure on the bundle $V_{\pi}\to N$. Furthermore, the symplectic structure $\omega^{\nu}$ on $V_{\pi}\to N$ is of type $(1,1)$ with respect to $I_3$ if and only if for all local sections $X,Y$ of $T^{1,0}M\to M$ we have 
    \begin{equation}\label{compcond}
        \{\nu_{\overline{X}}J,\nu_{\overline{Y}}J\}=0\,.
    \end{equation}
\end{lemma}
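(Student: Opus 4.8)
The plan is to compute everything explicitly in the coordinates $(x^i,y_i,\varphi^i,\varphi_i)$ on $N=TM$ introduced in Section \ref{vertsympstructure}, using the description \eqref{cstr} of $I_3$ together with the formulas \eqref{hvdef} for $h$ and $v$ in terms of $\mathcal{H}$, $\nu$ and the function $J$.

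First I would verify that $I_3$ restricts to an almost complex structure on $V_\pi\to N$. By \eqref{cstr} we have $I_3(v_X)=\mathrm{i}\,v_X$ and $I_3(\overline{v_X})=-\mathrm{i}\,\overline{v_X}$ for $X\in\pi^*(T^{1,0}M)$. On the other hand, from \eqref{hvdef}, $v_X=2\pi\mathrm{i}(\nu_{\overline{X}}+\mathrm{Ham}_{\nu_{\overline{X}}J})$, and both $\nu_{\overline{X}}$ and $\mathrm{Ham}_{\nu_{\overline{X}}J}$ are vertical (sections of $V_\pi\otimes\mathbb{C}$). Hence $\mathrm{Im}(v)\oplus\mathrm{Im}(\overline{v})\subset V_\pi\otimes\mathbb{C}$; and by the non-degeneracy condition \eqref{non-deg} a dimension count gives $V_\pi\otimes\mathbb{C}=\mathrm{Im}(v)\oplus\mathrm{Im}(\overline{v})$, because $\mathrm{Im}(h)\oplus\mathrm{Im}(\overline{h})$ is a complement to $V_\pi\otimes\mathbb{C}$ (the last two conditions of \eqref{non-deg} say $h$ projects isomorphically onto the horizontal directions). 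Thus $V_\pi\otimes\mathbb{C}=V_\pi^{1,0}\oplus V_\pi^{0,1}$ with $V_\pi^{1,0}=\mathrm{Im}(v)$, and $I_3$ preserves $V_\pi$, acting as $+\mathrm{i}$ on $\mathrm{Im}(v)$ and $-\mathrm{i}$ on $\mathrm{Im}(\overline{v})$.

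Next, for the type $(1,1)$ condition: $\omega^\nu$ is of type $(1,1)$ with respect to $I_3|_{V_\pi}$ if and only if $\omega^\nu$ vanishes on $V_\pi^{1,0}\times V_\pi^{1,0}$ (equivalently on $V_\pi^{0,1}\times V_\pi^{0,1}$), i.e. $\omega^\nu(v_X,v_Y)=0$ for all local sections $X,Y$ of $T^{1,0}M\to M$. I would now expand this using \eqref{hvdef}: $\omega^\nu(v_X,v_Y)=-4\pi^2\,\omega^\nu(\nu_{\overline{X}}+\mathrm{Ham}_{\nu_{\overline{X}}J},\,\nu_{\overline{Y}}+\mathrm{Ham}_{\nu_{\overline{Y}}J})$. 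The cross terms $\omega^\nu(\nu_{\overline{X}},\mathrm{Ham}_{\nu_{\overline{Y}}J})$ are, by the defining relation \eqref{usefulexp2} of the Hamiltonian vector field, equal to $-\nu_{\overline{X}}(\nu_{\overline{Y}}J)$ (a second vertical derivative of $J$), while the pure term $\omega^\nu(\nu_{\overline{X}},\nu_{\overline{Y}})$ contributes the constant-coefficient piece coming from \eqref{omegavertdarboux} and the relations \eqref{coord} — this is a holomorphic function of the base coordinates only (built from $\tau_{ij}$), symmetric under $X\leftrightarrow Y$, hence dropping out when we antisymmetrize. Being careful to track the antilinearity of $v$ and the symmetry of $\tau_{ij}$, the upshot is that $\omega^\nu(v_X,v_Y)=0$ reduces, after cancelling the base-only terms, exactly to $\omega^\nu(\mathrm{Ham}_{\nu_{\overline{X}}J},\mathrm{Ham}_{\nu_{\overline{Y}}J})=\{\nu_{\overline{X}}J,\nu_{\overline{Y}}J\}=0$.

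The main obstacle I anticipate is the bookkeeping in the middle step: one must confirm that the terms in $\omega^\nu(v_X,v_Y)$ which are \emph{not} of the form $\{\nu_{\overline{X}}J,\nu_{\overline{Y}}J\}$ genuinely cancel rather than merely "descend to the base." Because $v$ is complex antilinear and the coordinate expressions in \eqref{coord} carry the holomorphic $\tau_{ij}$, it is not a priori obvious that $\omega^\nu(\nu_{\overline{X}},\nu_{\overline{Y}})$ and the mixed terms $\nu_{\overline{X}}(\nu_{\overline{Y}}J)-\nu_{\overline{Y}}(\nu_{\overline{X}}J)$ vanish identically; the first does vanish because $\omega^\nu$ restricted to the vertical flat directions $\nu_{\partial_{Z^i}}$ is $\frac{\mathrm i}{2}\mathrm{Im}(\tau_{ij})$-type and symmetric, the second because $[\nu_{\overline X},\nu_{\overline Y}]=0$ (the flat frame $\nu_{\partial_{\overline Z^i}}$ commutes). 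Once these two identities are in hand the equivalence follows immediately, and it is clean to present the whole argument by evaluating on the frame $X=\partial_{Z^i}$, $Y=\partial_{Z^j}$ as in Remark \ref{pullbackremark}.
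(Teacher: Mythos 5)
Your proposal is correct and follows essentially the same route as the paper: reduce the $(1,1)$ condition to $\omega^{\nu}(v_X,v_Y)=0$ on a frame from holomorphic special coordinates, expand via \eqref{hvdef}, and observe that the pure $\nu$-term dies by symmetry of $\tau_{ij}$, the cross terms combine to $[\nu_{\overline X},\nu_{\overline Y}]J=0$, leaving exactly the Poisson bracket $\{\nu_{\overline X}J,\nu_{\overline Y}J\}$. (Minor note: by \eqref{usefulexp2} the individual cross term is $+\nu_{\overline X}(\nu_{\overline Y}J)$ rather than its negative, but since you only use the antisymmetrized combination this does not affect the argument.)
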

\begin{proof}
The fact that $I_3$ restricts to an almost complex structure on $V_{\pi}\to N$ follows directly from \eqref{cstr}. Now consider the local holomorphic sections $\frac{\partial}{\partial Z^i}$ of $T^{1,0}M\to M$ given by holomorphic special coordinates on $M$. By \eqref{non-deg} we have that 
    \begin{equation}
        v_i:=v_{\frac{\partial}{\partial Z^i}}, \quad \overline{v_j}:=\overline{v_{\frac{\partial}{\partial Z^j}}}\,,
    \end{equation}
    give a local frame of $V_{\pi}\otimes \mathbb{C}\to N$. Furthermore, by \eqref{cstr} the $v_i$ (resp. $\overline{v_j}$) are of type $(1,0)$ (resp. $(0,1)$) with respect to $I_3$. Since $\omega^{\nu}$ is real, to check that it is of type $(1,1)$ with respect to $I_3$ it is enough to check that 
    \begin{equation}
        \omega^{\nu}(v_i,v_j)=0
    \end{equation}
    for all $i,j$. Denoting $\nu_{\overline{i}}=\nu_{\frac{\partial}{\partial \overline{Z}^i}}$, a direct computation using \eqref{hvdef}, \eqref{nuholspecial}, \eqref{omegavertdarboux} and \eqref{usefulexp2} shows that 
    \begin{equation}
    \begin{split}
        \omega^{\nu}(v_i,v_j)&=-4\pi^2\mathrm{d}\varphi^k\wedge\mathrm{d}\varphi_k(\nu_{\overline{i}}+\text{Ham}_{\nu_{\overline{i}}J},\nu_{\overline{j}}+\text{Ham}_{\nu_{\overline{j}}J})\\
        &=-\pi^2(\overline{\tau_{ij}}-\overline{\tau_{ji}})-4\pi^2[\nu_{\overline{i}},\nu_{\overline{j}}]J-4\pi^2\{\nu_{\overline{i}}J, \nu_{\overline{j}}J\}\\
        &=-4\pi^2\{\nu_{\overline{i}}J, \nu_{\overline{j}}J\}\,,\\
    \end{split}
    \end{equation}
    since $\tau_{ij}$ is symmetric and $[\nu_{\overline{i}},\nu_{\overline{j}}]=0$. The result then follows. 
\end{proof}
\begin{remark}
    Note that from Remark \ref{ambrem} it follows that \eqref{compcond} is independent of the function $J$ used to represent $\mathcal{A}^{\zeta}$, so it only depends on $\mathcal{A}^{\zeta}$.  Condition \eqref{compcond} can be thought as a compatibility condition between the ASK structure on $M$ and the almost special Joyce structure $\mathcal{A}^{\zeta}$. 
\end{remark}

The previous remark motivates the following\footnote{Ultimately, what really motivates this definition is that it guarantees the existence of an HK structure on TM, due to Theorem \ref{maintheorem}.}
\begin{definition}\label{specialJoycedef}
    Consider an almost special Joyce structure $\mathcal{A}^{\zeta}$ on $(M,I,\omega,\nabla)$. We say that $\mathcal{A}^{\zeta}$ is a special Joyce structure if 
    \begin{itemize}
        \item The family  $\mathcal{A}^{\zeta}$ is flat for each $\zeta\in \mathbb{C}^{\times}$.
        \item The symplectic structure $\omega^{\nu}$ on $V_{\pi}\to N$ is of type $(1,1)$ with respect to $I_3$.
    \end{itemize}
\end{definition}
\subsubsection{The hyperhermitian structure associated to a special Joyce structure}

Consider a special Joyce structure $\mathcal{A}^{\zeta}$ over $M$. We now extend the associated hypercomplex structure $(N,I_1,I_2,I_3)$ from Proposition \ref{almosthypercomplex} and Corollary \ref{hypercomplexprop} to a hyperhermitian structure $(N,g,I_1,I_2,I_3)$. To define $g$ we first define a real non-degenerate 2-form $\omega_3$ on $N$ as follows\footnote{We remark that the $-1/4\pi^2$ factor is only conventional, and it is chosen in order to match certain conventions when we discuss examples.}
\begin{equation}\label{omega3def}
    \omega_3(v_X,\overline{v_Y})=\omega_3(h_Y,\overline{h_X}):=-\frac{1}{4\pi^2}\omega^{\nu}(v_X,\overline{v_Y}), \quad X,Y\in \pi^*(T^{1,0}M)\,,
\end{equation}
with all the other pairings being $0$. From \eqref{omega3def}, \eqref{cstr} and the second condition in Definition \ref{specialJoycedef} it follows that $\omega_3$ is real, non-degenerate and preserved by $I_3$.

\begin{proposition}\label{hyperhermitianstr}
    Given the real two form $\omega_3$ as in \eqref{omega3def}, let 
    \begin{equation}
        g(-,-):=\omega_3(-,I_3-).
    \end{equation}
\end{proposition}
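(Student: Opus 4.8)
The plan is to show that $g$ is a (possibly indefinite) pseudo-Riemannian metric on $N$ which is Hermitian for each of $I_1,I_2,I_3$, so that, together with the integrability of the $I_j$ from Corollary \ref{hypercomplexprop}, the tuple $(N,g,I_1,I_2,I_3)$ is a hyperhermitian manifold. Symmetry, non-degeneracy and $I_3$-Hermiticity are formal consequences of the three properties of $\omega_3$ recorded just before the statement (real, non-degenerate, $I_3$-invariant), together with $I_3^2=-\mathrm{id}$ and the antisymmetry of $\omega_3$: one has $g(Y,X)=\omega_3(Y,I_3X)=\omega_3(I_3Y,I_3^2X)=-\omega_3(I_3Y,X)=\omega_3(X,I_3Y)=g(X,Y)$, the assignment $X\mapsto g(X,-)=\omega_3(X,I_3(-))$ is injective since $\omega_3$ is non-degenerate and $I_3$ invertible, and $g(I_3X,I_3Y)=\omega_3(I_3X,I_3^2Y)=-\omega_3(I_3X,Y)=g(X,Y)$; reality of $g$ is clear since $\omega_3$ and $I_3$ are real.

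The core of the proof is $I_1$-Hermiticity; $I_2$-Hermiticity then follows automatically from the quaternion relations of Proposition \ref{almosthypercomplex}, which give $I_2=I_3I_1$, so that $g(I_2X,I_2Y)=g(I_3I_1X,I_3I_1Y)=g(I_1X,I_1Y)=g(X,Y)$. To check $I_1$-Hermiticity I would fix a system of holomorphic special coordinates $(Z^i)$ on $M$ and work in the induced local frame $h_i:=h_{\partial_{Z^i}}$, $v_i:=v_{\partial_{Z^i}}$ together with the conjugates $\overline{h_i},\overline{v_i}$, which by \eqref{non-deg} is a frame of $TN\otimes\mathbb{C}$. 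From \eqref{cstr} and reality of $I_1,I_3$ one reads off that $I_3$ acts on this frame by multiplication by $\mathrm{i}$ on $h_X,v_X$ and by $-\mathrm{i}$ on $\overline{h_X},\overline{v_X}$, while $I_1$ interchanges the ``horizontal'' and ``vertical'' blocks via $h_X\mapsto\overline{v_X}$, $\overline{h_X}\mapsto v_X$, $v_X\mapsto-\overline{h_X}$, $\overline{v_X}\mapsto-h_X$. Combined with the definition \eqref{omega3def}, a direct computation shows that the only nonvanishing components of $g$ in this frame are $g(h_X,\overline{h_Y})=\tfrac{\mathrm{i}}{4\pi^2}\omega^{\nu}(v_Y,\overline{v_X})$ and $g(v_X,\overline{v_Y})=\tfrac{\mathrm{i}}{4\pi^2}\omega^{\nu}(v_X,\overline{v_Y})$ (and their transposes). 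Applying $I_1$ then exchanges these two blocks according to $g(I_1h_X,I_1\overline{h_Y})=g(\overline{v_X},v_Y)=g(h_X,\overline{h_Y})$ and $g(I_1v_X,I_1\overline{v_Y})=g(\overline{h_X},h_Y)=g(v_X,\overline{v_Y})$, while all mixed pairings remain zero on both sides; hence $g(I_1A,I_1B)=g(A,B)$ on the frame, and therefore on all of $TN$ since $I_1$ and $g$ are real.

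The step that needs care — bookkeeping rather than a conceptual obstacle — is tracking the several sign and conjugation conventions simultaneously: the complex anti-linearity of $v$, the fact that $I_3$ acts by $-\mathrm{i}$ on $(0,1)$-vectors such as $\overline{h_X}$, the conventional factor $-1/4\pi^2$, and the transposed indices in \eqref{omega3def}. The point is that \eqref{omega3def} was set up precisely so that these conspire to make $I_1$ an isometry; concretely, the identities doing the work are $g(h_X,\overline{h_Y})=g(v_Y,\overline{v_X})=g(\overline{v_X},v_Y)$ and their conjugates, which are immediate once $g$ is written out using \eqref{omega3def} and \eqref{cstr}. Collecting everything — symmetry, non-degeneracy, and Hermiticity for $I_1,I_2,I_3$, together with the already-established integrability of $I_1,I_2,I_3$ — we conclude that $(N,g,I_1,I_2,I_3)$ is hyperhermitian, which is the assertion of the proposition.
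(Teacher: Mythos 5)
Your proof is correct and follows essentially the same route as the paper: direct verification of $I_1$-invariance on the frame $h_X,v_X,\overline{h_X},\overline{v_X}$ using \eqref{cstr} and \eqref{omega3def}, after observing that $I_3$-invariance of $\omega_3$ formally yields symmetry, non-degeneracy and $I_3$-Hermiticity of $g$; your sign bookkeeping for the nonzero components $g(h_X,\overline{h_Y})$ and $g(v_X,\overline{v_Y})$ checks out against \eqref{omega3def}. The only (harmless) deviation is that you deduce $I_2$-invariance from $I_2=I_3I_1$ together with the already-established $I_1$- and $I_3$-invariance, whereas the paper repeats the direct computation for $I_2$.
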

Then $g$ is a pseudo-Riemannian metric preserved by $I_i$, $i=1,2,3$. In particular, $(N,g,I_1,I_2,I_3)$ is an almost hyperhermitian manifold. 
\begin{proof}
Since $\omega_3$ is preserved by $I_3$, it follows that $g$ is a pseudo-Riemannian metric  preserved by $I_3$. To check that it is preserved by $I_1$, using \eqref{cstr} and \eqref{omega3def} note that 
\begin{equation}
    g(I_1h_X,I_1\overline{h_Y})=g(\overline{v_X},v_Y)=-\mathrm{i}\omega_3(v_Y,\overline{v_X})=-\mathrm{i}\omega_3(h_X,\overline{h_Y})=g(h_X,\overline{h_Y})\,,
\end{equation}
with an analogous computation showing that $g(I_1v_X,I_1\overline{v_Y})=g(v_X,\overline{v_Y})$. Similarly,
\begin{equation}
    g(I_1v_X,I_1\overline{h_Y})=g(-\overline{h_X},v_Y)=-\mathrm{i}\omega_3(\overline{h_X},v_Y)=0=-\mathrm{i}\omega_3(v_X,\overline{h_Y})=g(v_X,\overline{h_Y})\,.
\end{equation}
    The computations showing that $I_2$ preserves $g$ follow in the same way using \eqref{cstr} and \eqref{omega3def}. Finally, since the almost complex structures from \eqref{cstr} satisfy the quaterion relations, it follows that $(N,g,I_1,I_2,I_3)$ is almost hyperhermitian. 
\end{proof}

Besides $\omega_3$, we also define for $i=1,2$
\begin{equation}
    \omega_i(-,-):=g(I_i-,-)\,,
\end{equation}
and 
\begin{equation}
\Omega:=\omega_1+\mathrm{i}\omega_2\,.\,
\end{equation}
It is easy to check using \eqref{cstr} and 
\begin{equation}
    \Omega=\omega_3(-I_2+\mathrm{i}I_1-,-)
\end{equation}
that $\Omega$ is of type $(2,0)$ in the almost complex structure $I_3$, and that 
\begin{equation}\label{holomega_3}
    \Omega(h_X,h_Y)=\Omega(v_X,v_Y)=0, \quad \Omega(h_X,v_Y)=2\mathrm{i}\omega_3(\overline{v_X},v_Y)\,.
\end{equation}

\begin{definition}
    Given a special Joyce structure over an ASK manifold $M$, we call the hyperhermitian structure $(g,I_1,I_2,I_3)$ from Proposition \ref{hyperhermitianstr} the hyperhermitian structure associated to the special Joyce structure. 
\end{definition}

We would like to now show that the hyperhermitian structure $(N,g,I_1,I_2,I_3)$ is actually hyperk\"{a}hler. For this, it is enough to show that the forms $\omega_i$ are closed for $i=1,2,3$. Our main result is then as follows 
\begin{theorem}\label{maintheorem}
    Given a special Joyce structure $\mathcal{A}^{\zeta}$ over $(M,I,\omega,\nabla)$, the associated hyperhermitian structure $(N,g,I_1,I_2,I_3)$ is hyperk\"{a}hler. 
\end{theorem}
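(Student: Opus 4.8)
The plan is to reduce the statement to the closedness of the three fundamental $2$-forms, and then to verify closedness by a direct computation in the frame adapted to the family $\mathcal{A}^{\zeta}$, in the same spirit as the flatness analysis of Proposition \ref{flatspecial}. Since by Corollary \ref{hypercomplexprop} the complex structures $I_1,I_2,I_3$ are integrable, Proposition \ref{hyperhermitianstr} already gives a (pseudo-)hyperhermitian manifold $(N,g,I_1,I_2,I_3)$. For such a manifold it is a standard fact that $g$ is hyperk\"{a}hler as soon as $\omega_1,\omega_2,\omega_3$ are closed: closedness together with integrability makes each $(g,I_i)$ K\"{a}hler, hence $\nabla^{\mathrm{LC}}I_i=0$ for $i=1,2,3$, so the holonomy of $g$ preserves the three complex structures and thus lies in the relevant (pseudo-)symplectic group. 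As $\omega_1=\text{Re}(\Omega)$ and $\omega_2=\text{Im}(\Omega)$, it will suffice to show $\mathrm{d}\omega_3=0$ and $\mathrm{d}\Omega=0$.

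For the computation I would fix a conjugate system of holomorphic special coordinates $(Z^i)$ and work in the local frame $h_i:=h_{\partial_{Z^i}}$, $v_i:=v_{\partial_{Z^i}}$, $\overline{h_i}$, $\overline{v_i}$ of $TN\otimes\mathbb{C}$. Since the $\partial_{Z^i}$ are holomorphic and commute, the flatness equations \eqref{flateq} collapse to $[h_i,h_j]=[v_i,v_j]=[h_i,v_j]=0$, $[h_i,\overline{v_j}]=[h_j,\overline{v_i}]$, $[h_i,\overline{h_j}]=[\overline{v_i},v_j]$ and their conjugates. The relevant pairings are: by \eqref{omega3def} and the compatibility condition \eqref{compcond}, $\omega_3$ restricted to $V_{\pi}$ equals $-\tfrac1{4\pi^2}\omega^{\nu}$, while $\omega_3(h_i,\overline{h_j})=-\tfrac1{4\pi^2}\omega^{\nu}(v_j,\overline{v_i})$ and all remaining pairings vanish; and by \eqref{holomega_3}, $\Omega(h_i,v_j)=2\mathrm{i}\,\omega_3(\overline{v_i},v_j)$ with $\Omega(h_i,h_j)=\Omega(v_i,v_j)=0$. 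Finally, since $\nabla\omega=0$ the horizontal lift $\mathcal{H}$ preserves $\omega^{\nu}$, and the correction terms in $h_i$ and $v_i$ are fibrewise Hamiltonian vector fields, so one checks that $\mathcal{L}_{h_i}\omega^{\nu}=\mathcal{L}_{v_i}\omega^{\nu}=0$ as sections of $\wedge^2 V_{\pi}^*$.

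With this in place I would apply
\[
\mathrm{d}\alpha(A,B,C)=A\,\alpha(B,C)-B\,\alpha(A,C)+C\,\alpha(A,B)
-\alpha([A,B],C)+\alpha([A,C],B)-\alpha([B,C],A)
\]
to $\alpha=\omega_3$ and $\alpha=\Omega$ on all triples of frame vectors. A triple contributes nothing unless $\alpha$ is non-zero on at least two of its three pairs, which eliminates most triples for degree reasons. On the survivors one uses $\mathcal{L}_{h_i}\omega^{\nu}=\mathcal{L}_{v_i}\omega^{\nu}=0$ to convert each directional derivative $A\,\alpha(B,C)$ into pairings of brackets, then uses the flatness relations above, expanding $h_i=\mathcal{H}_i+\text{Ham}_{\mathcal{H}_iJ}$, $v_i=2\pi\mathrm{i}(\nu_{\overline{i}}+\text{Ham}_{\nu_{\overline{i}}J})$ together with \eqref{vcom} and \eqref{LiePoissonbracket} to turn the mixed brackets $[h_i,\overline{v_j}]$, $[\overline{v_i},v_j]$ into Hamiltonian vector fields of second derivatives of $J$. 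The identities that emerge are exactly: that the functions in \eqref{Plebanski-add} descend to $M$ (their Hamiltonian vector fields vanish), the two Pleba\'nski-like equations \eqref{Plebanski-like}, and the compatibility condition \eqref{compcond}; the base-function ambiguity in \eqref{Plebanski-like} is harmless because $\text{Ham}$ of a function pulled back from $M$ vanishes and $\omega_3,\Omega$ kill the purely horizontal directions. Since a special Joyce structure is flat (giving \eqref{Plebanski-add} and \eqref{Plebanski-like} by Proposition \ref{flatspecial}) and satisfies \eqref{compcond} by Definition \ref{specialJoycedef}, we get $\mathrm{d}\omega_3=0$ and $\mathrm{d}\Omega=0$, hence the result.

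The reduction and the degree-counting are routine; the main obstacle is the horizontal--horizontal and mixed components of $\mathrm{d}\omega_3$ and $\mathrm{d}\Omega$. Because $h$ is \emph{not} flat as a complexified Ehresmann connection (one has $[h_i,\overline{h_j}]=[\overline{v_i},v_j]\neq0$ in general, cf. the remark following Lemma \ref{flatlemma}), these components do not vanish for free, and one must expand all the mixed brackets carefully, tracking the $2\pi\mathrm{i}$ and $4\pi^2$ normalisations and which indices are conjugated, in order to recognise the output as precisely \eqref{Plebanski-like}. No idea beyond the bookkeeping already present in the proof of Proposition \ref{flatspecial} is needed, but this matching is the bulk of the work.
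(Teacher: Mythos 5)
Your proposal follows essentially the same route as the paper: reduce hyperk\"ahlerness to $\mathrm{d}\omega_3=\mathrm{d}\Omega=0$, expand the exterior derivatives on the frame $h_i,v_i,\overline{h_i},\overline{v_i}$, and use the flatness relations of Lemma \ref{flatlemma} together with the condition $\{\nu_{\overline{i}}J,\nu_{\overline{j}}J\}=0$ to reduce everything to identities equivalent to \eqref{Plebanski-add}--\eqref{Plebanski-like}; this is precisely the content of Lemma \ref{closedsimp} (proved in Appendix \ref{lemmaappendix}) and Lemmas \ref{cond1}--\ref{cond4}. The only cosmetic difference is that you propose converting directional derivatives into bracket pairings via $\mathcal{L}_{h_i}\omega^{\nu}=\mathcal{L}_{v_i}\omega^{\nu}=0$ on $V_{\pi}$, whereas the paper differentiates the explicit formula of Lemma \ref{keyexp} directly; both lead to the same residual identities.
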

We give the proof of this theorem in the following section.
\subsubsection{Proof of the main theorem}

In order to prove Theorem \ref{maintheorem}, we discuss several preliminary results. 

\begin{lemma}\label{keyexp}
    For $X,Y$ local sections of $T^{1,0}M\to M$, we have the expression (recall that $v_X$ is anti-linear, while $\nu_X$ is linear)
\begin{equation}\label{omega3explicit}
    \omega_3(v_X,\overline{v_Y})=\omega(Y,\overline{X})+\nu_{\overline{X}}(\nu_Y(J-\overline{J})) - \left\{\nu_{\overline{X}}J,\nu_Y\overline{J}\right\}
\end{equation}
\end{lemma}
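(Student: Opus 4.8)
The plan is to compute $\omega_3(v_X, \overline{v_Y})$ directly from the definition \eqref{omega3def}, namely $\omega_3(v_X,\overline{v_Y}) = -\frac{1}{4\pi^2}\omega^\nu(v_X,\overline{v_Y})$, by unpacking the formula \eqref{hvdef} for $v_X$. First I would reduce to the case where $X = \partial/\partial Z^i$, $Y = \partial/\partial Z^j$ are the coordinate holomorphic sections associated to a conjugate system of holomorphic special coordinates, since both sides of \eqref{omega3explicit} are tensorial (the left side obviously, and the right side because $\nu$, $\mathcal{H}$ and the Poisson bracket transform correctly — this is exactly the kind of check already done implicitly in Lemma \ref{11obstruction}). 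With that reduction, $v_X = 2\pi\mathrm{i}(\nu_{\overline{X}} + \mathrm{Ham}_{\nu_{\overline{X}}J})$ and $\overline{v_Y} = -2\pi\mathrm{i}(\nu_Y + \mathrm{Ham}_{\nu_Y\overline{J}})$ (using that $v$ is antilinear and taking the conjugate of \eqref{hvdef}), so
\begin{equation*}
\omega^\nu(v_X,\overline{v_Y}) = 4\pi^2\,\omega^\nu\big(\nu_{\overline{X}} + \mathrm{Ham}_{\nu_{\overline{X}}J},\; \nu_Y + \mathrm{Ham}_{\nu_Y\overline{J}}\big).
\end{equation*}

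Next I would expand this into four terms using bilinearity of $\omega^\nu$. The term $\omega^\nu(\nu_{\overline{X}},\nu_Y)$ gives, via \eqref{nuholspecial} and \eqref{omegavertdarboux}, a multiple of $\mathrm{Im}(\tau_{ij})$, which by Lemma \ref{usualsKident} is exactly $-\omega(Y,\overline{X})$ up to the normalization constant — this produces the first term on the right-hand side of \eqref{omega3explicit}. The cross terms $\omega^\nu(\nu_{\overline{X}}, \mathrm{Ham}_{\nu_Y\overline{J}})$ and $\omega^\nu(\mathrm{Ham}_{\nu_{\overline{X}}J}, \nu_Y)$ are handled by the defining relation \eqref{usefulexp2}, $Vf = \omega^\nu(V, \mathrm{Ham}_f)$: the first equals $\nu_{\overline{X}}(\nu_Y\overline{J})$ (up to sign from the slot), the second equals $-\nu_Y(\nu_{\overline{X}}J)$, and since $[\nu_{\overline{X}}, \nu_Y] = 0$ these combine to $\nu_{\overline{X}}\nu_Y(\overline{J} - J)$ — wait, I need to be careful with signs here; the point is they assemble into $\nu_{\overline{X}}(\nu_Y(J - \overline{J}))$ after keeping track of the overall sign from $-\frac{1}{4\pi^2}$ and the $4\pi^2$ and the $\mathrm{i}$'s. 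Finally the term $\omega^\nu(\mathrm{Ham}_{\nu_{\overline{X}}J}, \mathrm{Ham}_{\nu_Y\overline{J}})$ is by definition the Poisson bracket $\{\nu_{\overline{X}}J, \nu_Y\overline{J}\}$, giving the last term.

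The main obstacle I anticipate is purely bookkeeping: getting all the signs and factors of $2\pi\mathrm{i}$, $4\pi^2$, and $-\tfrac{1}{4\pi^2}$ to cancel correctly, together with the sign conventions hidden in the antilinearity of $v$ and in which slot of $\omega^\nu$ carries the Hamiltonian vector field in \eqref{usefulexp2}. There is also a subtlety in that \eqref{usefulexp2} and the $\mathrm{Ham}$ notation for expressions like $\nu_{\overline{X}}J$ were defined in Section \ref{vertsympstructure} via the auxiliary extension $\hat X$; I would invoke that the resulting object depends only on $X \in \pi^*(T^{1,0}M)|_{V_p}$ (as noted after \eqref{verthamexp}) so that the computation at a point is legitimate. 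Once the constants are pinned down, the identity \eqref{omega3explicit} drops out; the symmetry $\tau_{ij} = \tau_{ji}$ and $[\nu_{\overline{X}},\nu_Y] = 0$ are the only structural inputs beyond the definitions, both already available in the excerpt. I would also double-check consistency with the second equality $\omega_3(v_X,\overline{v_Y}) = \omega_3(h_Y,\overline{h_X})$ from \eqref{omega3def}, though that is not needed for this particular formula.
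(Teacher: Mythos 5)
Your proposal is correct and follows essentially the same route as the paper's proof: work in a conjugate system of holomorphic special coordinates, expand $\omega^{\nu}(v_i,\overline{v_j})$ into the four bilinear terms, identify the $\nu$--$\nu$ term with $\tfrac{\mathrm{i}}{2}\mathrm{Im}(\tau_{ij})=\omega(Y,\overline{X})$ via Lemma \ref{usualsKident}, the cross terms with $\nu_{\overline{X}}\nu_Y(J-\overline{J})$ using \eqref{usefulexp2} and $[\nu_{\overline{X}},\nu_Y]=0$, and the Hamiltonian--Hamiltonian term with the Poisson bracket. The sign and normalization bookkeeping you flag does work out exactly as you anticipate, and the antilinearity of $v$ versus linearity of $\nu$ is precisely how the paper passes from the coordinate computation to general sections.
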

\begin{proof}
Consider $(Z^i)$ and $(Z_i)$ a conjugate system of holomorphic special coordinates, $(x^i,y_i)$ the associated affine special coordinates, and $(x^i,y_i,\varphi^i,\varphi_i)$ the induced coorinates on $N$. We compute locally with respect to holomorphic special coordinates $(Z^i)$ and the coordinates $(Z^i,\varphi^i,\varphi_i)$ on $N$. Denoting $v_i:=v_{\partial/\partial Z^i}$, $\nu_i:=\nu_{\partial/\partial Z^i}$, $\nu_{\overline{i}}=\nu_{\partial/\partial \overline{Z}^i}$, and using \eqref{omega3def}, \eqref{omegavertdarboux}, \eqref{hvdef}, \eqref{usefulexp2} and \eqref{nuholspecial} we find 
\begin{equation}\label{omega3verev}
    \begin{split}
        \omega_3(v_i,\overline{v_j})=-\frac{1}{4\pi^2}\omega^{\nu}(v_i,\overline{v_j})&=-\mathrm{d}\varphi^k\wedge \mathrm{d}\varphi_k(\nu_{\overline{i}}+\text{Ham}_{\nu_{\overline{i}}J},\nu_j+\text{Ham}_{\nu_j\overline{J}})\\
        &=\frac{\mathrm{i}}{2}\text{Im}(\tau_{ij})-\nu_{\overline{i}}\nu_j\overline{J}+\nu_j\nu_{\overline{i}}J-\{\nu_{\overline{i}}J,\nu_j\overline{J}\}\,\\
        &=\frac{\mathrm{i}}{2}\text{Im}(\tau_{ij})+\nu_{\overline{i}}\nu_j(J-\overline{J})-\{\nu_{\overline{i}}J,\nu_j\overline{J}\}\,,\\
    \end{split}
\end{equation}
where on the last line we used that $[\nu_j,\nu_{\overline{i}}]=0$.\\

The final expression \eqref{omega3explicit} then follows by taking into account that $v$ is complex anti-linear, while $\nu$ is complex linear.
\end{proof}
\begin{lemma}\label{closedsimp} The forms $\omega_i$ are closed if and only if with respect to any system of holomorphic special coordinates $(Z_i)$ on $M$ and the associated $h_i:=h_{\partial/\partial Z^i}$, $v_i:=v_{\partial/\partial Z^i}$
we have

\begin{equation}
    \begin{split}
v_i\omega_3(v_j,\overline{v_k})&=v_j\omega_3(v_i,\overline{v_k})\\
h_j\omega_3(v_i,\overline{v_k})&=\omega_3(v_i,[h_j,\overline{v_k}])\\
\omega_3([v_i,\overline{h_k}],v_j)&=\omega_3([v_j,\overline{h_k}],v_i)\\
v_i\omega_3(v_j,\overline{v_k})&=\omega_3(v_i,[v_j,\overline{v_k}])\\
    \end{split}
\end{equation}
\end{lemma}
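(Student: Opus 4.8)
\textbf{Proof plan for Lemma \ref{closedsimp}.}

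The plan is to express the exterior derivative of each $\omega_i$ in a frame adapted to the splitting \eqref{non-deg} and then discard all identically-vanishing components, reducing the closedness of all three $\omega_i$ to the four displayed scalar identities. First I would fix a system of holomorphic special coordinates $(Z^i)$ on $M$ and work with the local frame $\{h_i,\overline{h_i},v_i,\overline{v_i}\}$ of $TN\otimes\mathbb C$, where $h_i:=h_{\partial/\partial Z^i}$ and $v_i:=v_{\partial/\partial Z^i}$; note that the lift $\mathcal H$ is flat and holomorphic, and the maps $\nu$, $h$, $v$ all descend from maps on pullback sections, so the Lie brackets of frame vectors can be organized using the flatness relations \eqref{flateq} together with \eqref{vcom}. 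I would then recall that $\omega_3$ has only the pairings $\omega_3(v_i,\overline{v_j})$ and $\omega_3(h_j,\overline{h_i})=-\omega_3(v_i,\overline{v_j})$ nonzero (by \eqref{omega3def}), and that $\Omega=\omega_1+\mathrm i\omega_2$ is of type $(2,0)$ with respect to $I_3$ with only $\Omega(h_X,v_Y)=2\mathrm i\,\omega_3(\overline{v_X},v_Y)$ nonzero among frame pairings (by \eqref{holomega_3}). So each $\omega_i$ is, up to the scalar functions $\omega_3(v_j,\overline{v_k})$, built from the coframe dual to $\{h_i,\overline{h_i},v_i,\overline{v_i}\}$ in a very restricted way.

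The computational core is the invariant formula $\mathrm d\alpha(A,B,C)=A\,\alpha(B,C)-B\,\alpha(A,C)+C\,\alpha(A,B)-\alpha([A,B],C)+\alpha([A,C],B)-\alpha([B,C],A)$ applied to triples of frame vectors $A,B,C$ drawn from $\{h_i,\overline{h_i},v_i,\overline{v_i}\}$. Because of the sparsity of $\omega_3$ and $\Omega$, the vast majority of such triples give $\mathrm d\omega_i=0$ automatically: any triple that cannot pair off into an allowed $(v,\overline v)$ or $(h,\overline h)$ slot after one Lie bracket contributes nothing. I would go through the finitely many ``surviving'' triple-types — roughly $(v_i,v_j,\overline{v_k})$, $(h_j,v_i,\overline{v_k})$, $(v_i,v_j,\overline{h_k})$, $(h_j,h_l,\overline{v_k})$ and their conjugates — and for each, use the flatness identities \eqref{flateq} ($[h_i,h_j]=h_{[\cdot,\cdot]}$, $[v_i,v_j]=0$, $[h_i,v_j]=0$, $[h_i,\overline{h_j}]=[\overline{v_i},v_j]$, and the mixed $\overline v$--$h$ relation) to kill the bracket terms that land in forbidden slots. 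For closedness of $\omega_3$ itself one gets the first and fourth displayed equations (from triples $(v_i,v_j,\overline{v_k})$, using $[v_i,v_j]=0$); the mixed triples $(h_j,v_i,\overline{v_k})$ and $(v_i,v_j,\overline{h_k})$, together with the type-$(2,0)$ structure of $\Omega$, produce the second and third. I would also check that closedness of $\omega_1$ and $\omega_2$ separately is equivalent to $\mathrm d\Omega=0$ and $\mathrm d\overline\Omega=0$, and that the latter is the complex conjugate of the former, so only one of the two needs to be analyzed.

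The main obstacle I expect is \emph{bookkeeping} rather than any deep idea: one must be careful that (a) the scalar functions $\omega_3(v_j,\overline{v_k})$ are differentiated by the correct frame vectors (and that ``$h_i$'' and ``$v_i$'' here really mean the full vector fields on $N$, not just the lifts $\mathcal H$, $\nu$), (b) the brackets $[h_i,\overline{v_j}]$, $[v_i,\overline{h_j}]$ are not zero and must be retained — in particular $[h_i,\overline{v_j}]$ has both a $\overline v$-component (contributing to the RHS of the second equation) and possibly an $h$-component that must be shown to drop out when paired against $\omega_3$, and similarly for the third equation — and (c) the antisymmetrization signs in $\mathrm d\alpha$ are tracked consistently so that, e.g., the $(v_i,v_j,\overline{v_k})$ computation genuinely collapses to the stated symmetric form. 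A secondary subtlety is handling the ``up to functions descending to the base'' clauses implicit in the flatness relations: I would note that such base functions are annihilated by $v_i$ and $\overline{v_j}$ and that $\omega_3(v_j,\overline{v_k})$ modulo base functions is exactly what appears, so the derivative identities are insensitive to this ambiguity (as already observed after Lemma \ref{11obstruction}). Once the sparsity reduction is set up correctly, each of the four equations falls out of a short manipulation, and conversely assuming the four equations makes every surviving component of $\mathrm d\omega_i$ vanish, giving the ``only if''/``if'' equivalence.
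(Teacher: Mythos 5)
Your plan follows essentially the same route as the paper's proof in Appendix \ref{lemmaappendix}: expand $\mathrm{d}\omega_3$ and $\mathrm{d}\Omega$ on triples from the frame $\{h_i,\overline{h_i},v_i,\overline{v_i}\}$, use the sparsity of $\omega_3$ and $\Omega$ together with the flatness relations \eqref{flateq} and the verticality of the mixed brackets (the paper's \eqref{vertbrackets}, which settles your worry about possible $h$-components) to discard most components, and then reduce the surviving equations to the four displayed identities. One small slip: \eqref{omega3def} gives $\omega_3(h_j,\overline{h_i})=+\,\omega_3(v_i,\overline{v_j})$, not $-\,\omega_3(v_i,\overline{v_j})$, but this does not affect the logic of the reduction.
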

\begin{proof}
The proof amounts to computing $\mathrm{d}\omega_3$ and $\mathrm{d}\Omega$, and then simplifying the equations obtained by setting $\mathrm{d}\omega_3=\mathrm{d}\Omega=0$ using the flatness conditions from Lemma \ref{flatlemma}, together with the definition of $\omega_3$ 
\eqref{omega3def} and the relation between $\Omega$ and $\omega_3$ in \eqref{holomega_3}. Since the computations are rather long (but simple), we write the detailed computations in Appendix \ref{lemmaappendix}.
\end{proof}

We now check the above equations one by one. We recall that in the computations below the Hamiltonian vector fields and Poisson bracket are with respect to the vertical symplectic structure from Section \ref{vertsympstructure}.

\begin{lemma}\label{cond1}
    Given a special Joyce-structure, the equation 
    \begin{equation}
v_i\omega_3(v_j,\overline{v_k})=v_j\omega_3(v_i,\overline{v_k})
    \end{equation}
    holds.
\end{lemma}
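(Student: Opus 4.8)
The plan is to reduce the identity $v_i\omega_3(v_j,\overline{v_k})=v_j\omega_3(v_i,\overline{v_k})$ to the Pleba\'nski-like equations of Proposition \ref{flatspecial}, using the explicit expression for $\omega_3(v_i,\overline{v_k})$ from Lemma \ref{keyexp}. First I would substitute \eqref{omega3explicit} (applied with $X=\partial_{Z^k}$, $Y=\partial_{Z^j}$, so that $\omega_3(v_j,\overline{v_k})=\omega(\partial_{Z^j},\partial_{\overline{Z}^k})+\nu_{\overline{k}}\nu_j(J-\overline{J})-\{\nu_{\overline{k}}J,\nu_j\overline{J}\}$) into both sides. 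The first term $\omega(\partial_{Z^j},\partial_{\overline{Z}^k})=\tfrac{\mathrm i}{2}\mathrm{Im}(\tau_{jk})$ is a function pulled back from the base $M$, hence is annihilated by the vertical operator $v_i$ (which differs from $\nu_i$ by a Hamiltonian vector field, and both are vertical); so those terms drop out of both sides and only the $J$-dependent pieces remain.

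The core of the argument is then to apply $v_i=2\pi\mathrm i(\nu_{\overline i}+\mathrm{Ham}_{\nu_{\overline i}J})$ to the remaining expression and antisymmetrize in $i\leftrightarrow j$. Here I would use that $[\nu_i,\nu_j]=[\nu_{\overline i},\nu_{\overline j}]=[\nu_i,\nu_{\overline j}]=0$ (these follow from \eqref{nuholspecial}, \eqref{coord} and the symmetry of $\tau$, exactly as in the proof of Proposition \ref{flatspecial}), together with the derivation property $[\mathrm{Ham}_f,\mathrm{Ham}_g]=\mathrm{Ham}_{\{f,g\}}$ and the fact that Hamiltonian vector fields act on functions as $\mathrm{Ham}_f(g)=\{g,f\}$. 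Expanding $v_i\big(\nu_{\overline k}\nu_j(J-\overline J)\big)$ and $v_i\big(\{\nu_{\overline k}J,\nu_j\overline J\}\big)$ and then subtracting the $i\leftrightarrow j$ swapped version, the terms that are symmetric in $i,j$ cancel; what survives should, after using the Jacobi identity for the Poisson bracket and the commutation of the $\nu$'s, reorganize precisely into the first equation of \eqref{Plebanski-like} (the one reading $\nu_X(\mathcal H_Y(J-\overline J))-\nu_Y(\mathcal H_X(J-\overline J))=\{\nu_Y\overline J,\mathcal H_XJ\}-\{\nu_X\overline J,\mathcal H_YJ\}$) differentiated once more vertically, plus multiples of the compatibility condition \eqref{compcond} $\{\nu_{\overline X}J,\nu_{\overline Y}J\}=0$ and of $\{\mathcal H_XJ,\nu_{\overline Y}J\}$ descending to the base. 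Since a special Joyce structure is flat for all $\zeta$ and satisfies \eqref{compcond}, all of these vanish (or descend to the base and are then killed by the remaining vertical derivative), giving the claim.

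There is an important subtlety to flag: the equations in Proposition \ref{flatspecial} hold only \emph{up to addition of functions that descend to the base $M$}, and similarly \eqref{Plebanski-add} only asserts that certain Poisson brackets descend to the base. So after the manipulation above I will be left not with $0$ but with $v_i$ (equivalently $\nu_{\overline i}$, up to a Hamiltonian correction) applied to something that descends to $M$; I then need the observation that any vertical vector field annihilates a function pulled back from the base, so that final step closes. Concretely I would track each ``base function'' ambiguity through the computation and note it is hit by at least one genuine vertical derivative at the end.

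The main obstacle I expect is bookkeeping rather than conceptual: correctly expanding the two nested terms $v_i(\nu_{\overline k}\nu_j(J-\overline J))$ and $v_i\{\nu_{\overline k}J,\nu_j\overline J\}$ with $v_i$ itself a sum of a $\nu$-derivative and a Hamiltonian vector field, keeping careful track of which brackets are Poisson and which are Lie, using the conjugation relations ($\nu_X$ linear, $v_X$ antilinear) correctly, and verifying that exactly the right combination of \eqref{Plebanski-like}, \eqref{compcond} and the base-descent statements appears. A secondary point to get right is that the computation must be carried out in holomorphic special coordinates where $\nu_i=\tfrac12(\partial_{\varphi^i}-\tau_{ij}\partial_{\varphi_j})$, so that the $\tau_{ij}$, although functions on the base, commute through all vertical derivatives and never obstruct the cancellations. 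Once the algebra is organized this way, no genuinely new input beyond Proposition \ref{flatspecial} and Definition \ref{specialJoycedef} should be needed.
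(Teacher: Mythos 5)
Your overall strategy coincides with the paper's: substitute the expression for $\omega_3(v_j,\overline{v_k})$ from Lemma \ref{keyexp}, note that the $\omega$-term descends to the base and is killed by the vertical operator $v_i$, then expand $v_i=2\pi\mathrm{i}(\nu_{\overline{i}}+\text{Ham}_{\nu_{\overline{i}}J})$ and antisymmetrize in $i\leftrightarrow j$ using the commutativity of the $\nu$'s, the Jacobi identity, and the conditions of Proposition \ref{flatspecial} and Definition \ref{specialJoycedef}. However, two points in your plan need correction before the computation closes.

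First, you have applied Lemma \ref{keyexp} with the arguments interchanged. In $\omega_3(v_X,\overline{v_Y})=\omega(Y,\overline{X})+\nu_{\overline{X}}(\nu_Y(J-\overline{J}))-\{\nu_{\overline{X}}J,\nu_Y\overline{J}\}$ the \emph{barred} $\nu$-index is the one attached to the first ($v$) slot, so $\omega_3(v_j,\overline{v_k})=\tfrac{\mathrm{i}}{2}\mathrm{Im}(\tau_{jk})+\nu_{\overline{j}}\nu_k(J-\overline{J})-\{\nu_{\overline{j}}J,\nu_k\overline{J}\}$, not $\nu_{\overline{k}}\nu_j(J-\overline{J})-\{\nu_{\overline{k}}J,\nu_j\overline{J}\}$ as you wrote (your formula is the one for $\omega_3(v_k,\overline{v_j})$). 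This is not a cosmetic issue: the leading term after applying $v_i$ is $\nu_{\overline{i}}\nu_{\overline{j}}\nu_k(J-\overline{J})$, which is symmetric in $i\leftrightarrow j$ precisely because both differentiating indices appear barred and $[\nu_{\overline{i}},\nu_{\overline{j}}]=0$. With your substitution the leading term would be $\nu_{\overline{i}}\nu_{\overline{k}}\nu_j(J-\overline{J})$, and its $i\leftrightarrow j$ swap $\nu_{\overline{j}}\nu_{\overline{k}}\nu_i(J-\overline{J})$ is a genuinely different quantity (since $\nu_{\overline{i}}\nu_j\neq\nu_{\overline{j}}\nu_i$ as operators), so the first and most important cancellation fails as written.

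Second, the flatness input you identify is not the one that actually enters. The first equation of \eqref{Plebanski-like} involves horizontal derivatives $\mathcal{H}_X$, $\mathcal{H}_Y$, and no horizontal derivative appears anywhere in $v_i\omega_3(v_j,\overline{v_k})$ — the entire computation is vertical. The only inputs beyond $[\nu_{\overline{i}},\nu_{\overline{j}}]=[\nu_i,\nu_{\overline{j}}]=0$ and the Jacobi identity are that $\{\nu_{\overline{i}}J,\nu_{\overline{j}}J\}$ descends to the base (one of the conditions \eqref{Plebanski-add}; in fact it vanishes by the second condition of Definition \ref{specialJoycedef} together with Lemma \ref{11obstruction}), which kills the term $\{\nu_k\overline{J},\{\nu_{\overline{i}}J,\nu_{\overline{j}}J\}\}$ arising from the Jacobi rearrangement of $\text{Ham}_{\nu_{\overline{i}}J}\{\nu_{\overline{j}}J,\nu_k\overline{J}\}$, and symmetrizes the remaining cross terms. (There is also a harmless sign slip: with the paper's conventions \eqref{usefulexp}, $\text{Ham}_f(g)=\{f,g\}$, not $\{g,f\}$.) With the indices corrected and the right conditions invoked, your plan reproduces the paper's proof.
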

\begin{proof}

By Lemma \ref{keyexp} and \eqref{hvdef} we obtain
\begin{equation}
\begin{split}
v_i\omega_3(v_j,\overline{v_k})&=2\pi \mathrm{i}(\nu_{\overline{i}}+\text{Ham}_{\nu_{\overline{i}}J})(\nu_{\overline{j}}\nu_k(J-\overline{J}) -\{\nu_{\overline{j}}J,\nu_k\overline{J}\})\,\\
&=2\pi \mathrm{i}(\nu_{\overline{i}}\nu_{\overline{j}}\nu_k(J-\overline{J})+(-\nu_{\overline{i}}\{\nu_{\overline{j}}J,\nu_k\overline{J}\}+\text{Ham}_{\nu_{\overline{i}}J}(\nu_{\overline{j}}\nu_k(J-\overline{J})))\\
&\quad -\text{Ham}_{\nu_{\overline{i}}J}\{\nu_{\overline{j}}J,\nu_k\overline{J}\})
\end{split}
\end{equation}
Now note that since $[\nu_{\overline{i}},\nu_{\overline{j}}]=0$ (recall \eqref{nuholspecial}), the quantity 
\begin{equation}
\nu_{\overline{i}}\nu_{\overline{j}}\nu_k(J-\overline{J})
\end{equation}
is symmetric in $i$ and $j$. On the other hand, using \eqref{usefulexp2} and the Jacobi identity for the Poisson bracket
\begin{equation}
\begin{split}
\text{Ham}_{\nu_{\overline{i}}J}\{\nu_{\overline{j}}J,\nu_k\overline{J}\}&=\{\nu_{\overline{i}}J,\{\nu_{\overline{j}}J,\nu_k\overline{J}\}\}\\
&=-\{\nu_{\overline{j}}J,\{\nu_{k}\overline{J},\nu_{\overline{i}}J\}\}-\{\nu_{k}\overline{J},\{\nu_{\overline{i}}J,\nu_{\overline{j}}J\}\}\\
&=\text{Ham}_{\nu_{\overline{j}}J}\{\nu_{\overline{i}}J,\nu_{k}\overline{J}\}-\{\nu_{k}\overline{J},\{\nu_{\overline{i}}J,\nu_{\overline{j}}J\}\}\,.\\
\end{split}
\end{equation}
Recalling from Proposition \ref{flatspecial} that  
\begin{equation}
    \{\nu_{\overline{i}}J,\nu_{\overline{j}}J\}
\end{equation}
must be a function that descends to the base (in fact is must be $0$ by the second property of a special Joyce structure), it follows that 
\begin{equation}
   \{\nu_{k}\overline{J},\{\nu_{\overline{i}}J,\nu_{\overline{j}}J\}\}=0 \implies \text{Ham}_{\nu_{\overline{i}}J}\{\nu_{\overline{j}}J,\nu_{k}\overline{J}\}=\text{Ham}_{\nu_{\overline{j}}J}\{\nu_{\overline{i}}J,\nu_{k}\overline{J}\}\,. 
\end{equation}
On the other hand, by using again that $[\nu_{\overline{i}},\nu_{\overline{j}}]=[\nu_i,\nu_{\overline{j}}]=0$ and $\{\nu_{\overline{i}}J,\nu_{\overline{j}}J\}=0$ it easily follows by an explicit computation that 
\begin{equation}
    -\nu_{\overline{i}}\{\nu_{\overline{j}}J,\nu_k\overline{J}\}+\text{Ham}_{\nu_{\overline{i}}J}(\nu_{\overline{j}}\nu_k(J-\overline{J}))=-\nu_{\overline{i}}\{\nu_{\overline{j}}J,\nu_k\overline{J}\}+\{\nu_{\overline{i}}J,\nu_{\overline{j}}\nu_k(J-\overline{J})\}
\end{equation}
is symmetric in $i$ and $j$. Hence we conclude that 
    \begin{equation}
v_i\omega_3(v_j,\overline{v_k})=v_j\omega_3(v_i,\overline{v_k})
    \end{equation}
\end{proof}
\begin{lemma}\label{cond2}
   Given a special Joyce structure, the following holds
\begin{equation}
v_i\omega_3(v_j,\overline{v_k})=\omega_3(v_i,[v_j,\overline{v_k}])\,.
\end{equation}
\end{lemma}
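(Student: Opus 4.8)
The plan is to reduce the identity to a clean structural formula for the vertical bracket $[v_j,\overline{v_k}]$, and then simply contract it with $v_i$. First I would rewrite the two fields using \eqref{hvdef}: since $\nu$ is complex linear, $\omega^{\nu}$ is real, and $\overline{\partial_{\overline{Z}^k}}=\partial_{Z^k}$, we get $v_j=2\pi\mathrm{i}\bigl(\nu_{\overline{j}}+\text{Ham}_{\nu_{\overline{j}}J}\bigr)$ and $\overline{v_k}=-2\pi\mathrm{i}\bigl(\nu_k+\text{Ham}_{\nu_k\overline{J}}\bigr)$, where $\nu_{\overline{j}}:=\nu_{\partial_{\overline{Z}^j}}$. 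Expanding the bracket, using $[\nu_{\overline{j}},\nu_k]=0$, the identity $[\nu_{\overline{j}},\text{Ham}_f]=\text{Ham}_{\nu_{\overline{j}}f}$ (valid since $\nu_{\overline{j}}$ has $\varphi$-independent coefficients), and \eqref{LiePoissonbracket}, one finds
\[
    [v_j,\overline{v_k}]=4\pi^2\,\text{Ham}_{-\nu_{\overline{j}}\nu_k(J-\overline{J})+\{\nu_{\overline{j}}J,\nu_k\overline{J}\}}\,.
\]
Comparing with Lemma \ref{keyexp}, the function in the subscript equals $-\omega_3(v_j,\overline{v_k})+\omega(\partial_{Z^k},\partial_{\overline{Z}^j})$, and since $\omega(\partial_{Z^k},\partial_{\overline{Z}^j})=\tfrac{\mathrm{i}}{2}\text{Im}(\tau_{kj})$ is a function on the base $M$ it has vanishing Hamiltonian vector field. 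This gives the key formula
\[
    [v_j,\overline{v_k}]=-4\pi^2\,\text{Ham}_{\omega_3(v_j,\overline{v_k})}\,.
\]

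Next I would contract with $v_i$. The point is that for a \emph{special} Joyce structure the restriction of $\omega_3$ to $V_{\pi}\otimes\mathbb{C}$ is proportional to $\omega^{\nu}$: by \eqref{omega3def} we have $\omega_3(v_i,v_l)=0$, and by Lemma \ref{11obstruction} together with the second defining property of a special Joyce structure also $\omega^{\nu}(v_i,v_l)=-4\pi^2\{\nu_{\overline{i}}J,\nu_{\overline{l}}J\}=0$. Expanding $\text{Ham}_f\in V_{\pi}\otimes\mathbb{C}$ in the frame $\{v_l,\overline{v_l}\}$, using \eqref{omega3def} on the $\overline{v}$-components and then \eqref{usefulexp2}, this yields, for any smooth $f$ on $N$,
\[
    \omega_3(v_i,\text{Ham}_f)=-\tfrac{1}{4\pi^2}\,\omega^{\nu}(v_i,\text{Ham}_f)=-\tfrac{1}{4\pi^2}\,v_i f\,.
\]
Applying this with $f=\omega_3(v_j,\overline{v_k})$ and combining with the key formula above gives $\omega_3(v_i,[v_j,\overline{v_k}])=v_i\omega_3(v_j,\overline{v_k})$, which is exactly the claim.

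The calculations are short. Unlike the two remaining equations of Lemma \ref{closedsimp}, which involve the $h$'s and hence force one to invoke the flatness identities of Lemma \ref{flatlemma} (recall that $h$ is not a flat complexified connection), the present identity does not use flatness of $\mathcal{A}^{\zeta}$ at all: only the $(1,1)$-type of $\omega^{\nu}$ enters, through the vanishing of $\omega^{\nu}(v_i,v_l)$. I therefore do not expect a real obstacle; the only points that need attention are the conjugation bookkeeping in passing from $v_j$ to $\overline{v_k}$ (the anti-linearity of $v$ versus the linearity of $\nu$) and the two places where ``base'' terms must be checked to drop out — once under $\text{Ham}$ and once under the vertical contraction with $v_i$. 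Conceptually, the identity just says that the vertical fields close up into the Hamiltonian vector fields of their own $\omega_3$-pairings, reflecting that the fibres of $\pi$ are complex Lagrangian for $\Omega$ (cf.\ \eqref{holomega_3}).
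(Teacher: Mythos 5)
Your proof is correct and follows essentially the same route as the paper's: both compute $[v_j,\overline{v_k}]=4\pi^2\,\text{Ham}_{-\nu_{\overline{j}}\nu_k(J-\overline{J})+\{\nu_{\overline{j}}J,\nu_k\overline{J}\}}$ and then pair with $v_i$ using the vertical symplectic structure and Lemma \ref{keyexp}. Your packaging via the two intermediate identities $[v_j,\overline{v_k}]=-4\pi^2\,\text{Ham}_{\omega_3(v_j,\overline{v_k})}$ and $\omega_3(v_i,\text{Ham}_f)=-\tfrac{1}{4\pi^2}v_if$ (the latter resting on the $(1,1)$ condition, which the paper uses implicitly when it replaces $\omega_3$ by $-\tfrac{1}{4\pi^2}\omega^{\nu}$ on vertical arguments) is a mild but genuine streamlining of the same computation, and your observation that flatness of $\mathcal{A}^{\zeta}$ is not needed for this particular identity is accurate.
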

\begin{proof}
On one hand, by an explicit computation using \eqref{hvdef} and \eqref{LiePoissonbracket} we obtain
\begin{equation}
    [v_j,\overline{v_k}]=4\pi^2\text{Ham}_{-\nu_{\overline{j}}\nu_k(J-\overline{J})+\{\nu_{\overline{j}}J,\nu_k\overline{J}\}}=-4\pi^2\text{Ham}_{J_{\overline{j}k}}
\end{equation}
where we have set
\begin{equation}
    J_{\overline{j}k}:=\nu_{\overline{j}}\nu_k(J-\overline{J})+\{\nu_k\overline{J},\nu_{\overline{j}}J\}\,.
\end{equation}
On the other hand, we have 
\begin{equation}
\begin{split}
    \omega_3(v_i,[v_j,\overline{v_k}])&=-(2\pi)^3\mathrm{i}(\omega_3(\nu_{\overline{i}},\text{Ham}_{J_{\overline{j}k}})+\omega_3(\text{Ham}_{\nu_{\overline{i}}J}, \text{Ham}_{J_{\overline{j}k}}))\,.
\end{split}
\end{equation}
Now note that by \eqref{omega3def}, \eqref{usefulexp} and \eqref{usefulexp2}
\begin{equation}
\begin{split}
-(2\pi)^3\mathrm{i}\omega_3(\text{Ham}_{\nu_{\overline{i}}J}, \text{Ham}_{J_{\overline{j}k}})&=2\pi \mathrm{i}\mathrm{d}\varphi^i\wedge \mathrm{d}\varphi_i(\text{Ham}_{\nu_{\overline{i}}J}, \text{Ham}_{J_{\overline{j}k}})\\
&=2\pi \mathrm{i}\{\nu_{\overline{i}}J, J_{\overline{j}k}\}\\
&=2\pi \mathrm{i}(\text{Ham}_{\nu_{\overline{i}}J}(\nu_{\overline{j}}\nu_k(J-\overline{J})) + \text{Ham}_{\nu_{\overline{i}}J}\{\nu_k\overline{J},\nu_{\overline{j}}J\})\,.
\end{split}
\end{equation}
Finally, an easy computation using again \eqref{omega3def}, \eqref{usefulexp} and \eqref{usefulexp2} shows that
\begin{equation}
    -(2\pi)^3\mathrm{i}\omega_3(\nu_{\overline{i}},\text{Ham}_{J_{\overline{j}k}})=2\pi \mathrm{i}\nu_{\overline{i}}J_{\overline{j}k}=2\pi \mathrm{i}(\nu_{\overline{i}}\nu_{\overline{j}}\nu_k(J-\overline{J})-\nu_{\overline{i}}\{\nu_{\overline{j}}J,\nu_k\overline{J}\})
\end{equation}
Hence, we conclude using Lemma \ref{keyexp} that 
\begin{equation}
v_i\omega_3(v_j,\overline{v_k})=\omega_3(v_i,[v_j,\overline{v_k}])\,.
\end{equation}
\end{proof}
\begin{lemma}\label{cond3}
    Give a special Joyce structure, the following holds
    \begin{equation}
h_j\omega_3(v_i,\overline{v_k})=\omega_3(v_i,[h_j,\overline{v_k}])
    \end{equation}
\end{lemma}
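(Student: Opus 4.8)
The strategy is the one used in the proofs of Lemmas~\ref{cond1} and~\ref{cond2}: work in a system of holomorphic special coordinates $(Z^i)$ on $M$, with induced coordinates $(x^i,y_i,\varphi^i,\varphi_i)$ on $N$, compute both sides explicitly, and reduce the identity to one of the flatness conditions already recorded in Proposition~\ref{flatspecial}. Abbreviate $\mathcal{H}_j:=\mathcal{H}_{\partial/\partial Z^j}$, $\nu_j:=\nu_{\partial/\partial Z^j}$, $\nu_{\overline{j}}:=\nu_{\partial/\partial \overline{Z}^j}$; recall from \eqref{hholspecial} and \eqref{nuholspecial} that $\mathcal{H}_j=\partial/\partial Z^j$ and $\nu_j=\tfrac12(\partial_{\varphi^j}-\tau_{jl}\partial_{\varphi_l})$, from \eqref{hvdef} that $h_j=\mathcal{H}_j+\text{Ham}_{\mathcal{H}_jJ}$ and $\overline{v_k}=-2\pi\mathrm{i}(\nu_k+\text{Ham}_{\nu_k\overline{J}})$, and from Lemma~\ref{keyexp} that
\[
\omega_3(v_i,\overline{v_k})=\omega(\partial_{Z^k},\partial_{\overline{Z}^i})+\nu_{\overline{i}}\nu_k(J-\overline{J})-\{\nu_{\overline{i}}J,\nu_k\overline{J}\},
\]
where $\omega(\partial_{Z^k},\partial_{\overline{Z}^i})=\tfrac{\mathrm{i}}{2}\,\text{Im}(\tau_{ki})$ descends to the base $M$ (Lemma~\ref{usualsKident}).

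First I would compute the bracket $[h_j,\overline{v_k}]$. Expanding it and using $[\mathcal{H}_j,\text{Ham}_f]=\text{Ham}_{\mathcal{H}_jf}$, $[\nu_k,\text{Ham}_f]=\text{Ham}_{\nu_kf}$ and $[\text{Ham}_f,\text{Ham}_g]=\text{Ham}_{\{f,g\}}$ (valid since $\mathcal{H}_j$ commutes with the $\partial_\varphi$'s and $\nu_k$ has $\varphi$-constant coefficients), together with the relation $[\mathcal{H}_j,\nu_k]=C_{jk}$, where $C_{jk}:=-\tfrac12(\partial_{Z^j}\tau_{kl})\partial_{\varphi_l}$ is a vertical vector field with base-dependent coefficients, totally symmetric in its three indices and commuting with every $\nu_m$ and $\nu_{\overline{m}}$, one finds
\[
[h_j,\overline{v_k}]=-2\pi\mathrm{i}\left(C_{jk}+\text{Ham}_{F_{jk}}\right),\qquad F_{jk}:=-\nu_k\!\left(\mathcal{H}_j(J-\overline{J})\right)+C_{jk}\overline{J}+\{\mathcal{H}_jJ,\nu_k\overline{J}\}
\]
(note also $[\mathcal{H}_j,\nu_{\overline{i}}]=0$, since $\nu_{\overline{i}}$ has antiholomorphic coefficients).

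Next I would evaluate the two sides. On the left, $\text{Ham}_{\mathcal{H}_jJ}$ is vertical and so annihilates the base function $\omega(\partial_{Z^k},\partial_{\overline{Z}^i})$, while $\mathcal{H}_j$ contributes $\tfrac14\partial_{Z^j}\tau_{ki}$ to it; the remaining term $h_j(\nu_{\overline{i}}\nu_k(J-\overline{J})-\{\nu_{\overline{i}}J,\nu_k\overline{J}\})$ I would expand by commuting $\mathcal{H}_j$ past $\nu_{\overline{i}}\nu_k$ (picking up a $C_{jk}$) and using the Leibniz rule $V\{f,g\}=\{Vf,g\}+\{f,Vg\}$ for $V=\mathcal{H}_j,\nu_{\overline{i}}$. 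On the right, $[h_j,\overline{v_k}]$ is vertical, and by \eqref{omega3def} together with the $(1,1)$-compatibility condition in Definition~\ref{specialJoycedef} the form $\omega_3$ restricted to $V_\pi\otimes\mathbb{C}$ equals $-\tfrac1{4\pi^2}\omega^\nu$, so $\omega_3(v_i,[h_j,\overline{v_k}])$ can be rewritten via \eqref{usefulexp} and \eqref{usefulexp2}. The $C_{jk}$-part of $[h_j,\overline{v_k}]$ contributes exactly $\tfrac14\partial_{Z^j}\tau_{ki}+C_{jk}(\nu_{\overline{i}}J)$ to $\omega_3(v_i,[h_j,\overline{v_k}])$: the first summand cancels the base-function contribution from the left-hand side, and all the remaining $C_{jk}$-terms on the two sides cancel each other.

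After these cancellations I expect the difference of the two sides, using once more the Leibniz rule and a single application of the Jacobi identity for $\{-,-\}$, to collapse to
\[
(\nu_k+\text{Ham}_{\nu_k\overline{J}})\{\mathcal{H}_jJ,\nu_{\overline{i}}J\}=\tfrac{\mathrm{i}}{2\pi}\,\overline{v_k}\{\mathcal{H}_jJ,\nu_{\overline{i}}J\}.
\]
By Proposition~\ref{flatspecial} the function $\{\mathcal{H}_jJ,\nu_{\overline{i}}J\}$ descends to the base $M$ for a special Joyce structure, hence is annihilated by the vertical vector field $\overline{v_k}$; this proves the identity. The main obstacle is the bookkeeping of the $C_{jk}=[\mathcal{H}_j,\nu_k]$ terms --- i.e.\ the contributions of the special-geometry cubic form $\partial_{Z^i}\tau_{jk}=\partial^3\mathfrak{F}/\partial Z^i\partial Z^j\partial Z^k$ --- and verifying that they cancel exactly against the $\mathcal{H}_j$-derivative of the base term $\omega(\partial_{Z^k},\partial_{\overline{Z}^i})$ occurring in $\omega_3$; a secondary point is to keep track of which of the two conditions in Definition~\ref{specialJoycedef} (flatness of $\mathcal{A}^\zeta$, or the $(1,1)$ condition) enters at each step.
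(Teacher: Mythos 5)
Your proposal is correct and follows essentially the same route as the paper's own proof: the same coordinate computation of $[h_j,\overline{v_k}]$, the same use of Lemma \ref{keyexp} and of the identification $\omega_3|_{V_\pi\otimes\mathbb{C}}=-\tfrac{1}{4\pi^2}\omega^{\nu}$ (where, as you note, the $(1,1)$ condition of Definition \ref{specialJoycedef} implicitly enters), and the same appeal to Proposition \ref{flatspecial} for the base-descent of $\{\mathcal{H}_jJ,\nu_{\overline{i}}J\}$. The only difference is organizational: the paper uses that base-descent twice (once through the Jacobi identity, once through the Leibniz rule for $\nu_k$), whereas you consolidate both into the single final term $(\nu_k+\text{Ham}_{\nu_k\overline{J}})\{\mathcal{H}_jJ,\nu_{\overline{i}}J\}$, which is indeed exactly what the difference of the two sides collapses to.
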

\begin{proof}

We start by noting that 
\begin{equation}
    [h_j,\overline{v_k}]=2\pi \mathrm{i}\left(-[\mathcal{H}_j,\nu_k]+\text{Ham}_{\mathcal{H}_j\nu_k(J-\overline{J})-\{\mathcal{H}_jJ,\nu_k\overline{J}\}-[\mathcal{H}_j,\nu_k]J}\right)
\end{equation}
Setting
\begin{equation}
J_{jk}:=\mathcal{H}_j\nu_k(J-\overline{J})-\{\mathcal{H}_jJ,\nu_k\overline{J}\}
\end{equation}
one finds that
\begin{equation}
\begin{split}
    \omega_3(v_i,[h_j,\overline{v_k}])&=4\pi^2\omega_3(\nu_{\overline{i}}+\text{Ham}_{\nu_{\overline{i}}J},[\mathcal{H}_j,\nu_k]-\text{Ham}_{J_{jk}-[\mathcal{H}_j,\nu_k]J})\,.
\end{split}
\end{equation}
We compute this by parts. On one hand, using as in the previous proposition, \eqref{omega3def}, \eqref{usefulexp} and \eqref{usefulexp2}, together with $[\mathcal{H}_j,\nu_{\overline{i}}]=0$ (since $\tau_{ij}$ is holomorphic), we find 
\begin{equation}
\begin{split}
    4\pi^2\omega_3(\nu_{\overline{i}},[\mathcal{H}_j,\nu_k]-\text{Ham}_{J_{jk}-[\mathcal{H}_j,\nu_k]J})&=4\pi^2\omega_3(\nu_{\overline{i}},[\mathcal{H}_j,\nu_k])-4\pi^2\omega_3(\nu_{\overline{i}},\text{Ham}_{J_{jk}-[\mathcal{H}_j,\nu_k]J})\\
    &=\frac{1}{4}\frac{\partial \tau_{ki}}{\partial Z^j}+(\nu_{\overline{i}}J_{jk}-\nu_{\overline{i}}[\mathcal{H}_j,\nu_k]J)\\
    &=\frac{1}{4}\frac{\partial \tau_{ki}}{\partial Z^j}+\mathcal{H}_j\nu_{\overline{i}}\nu_{k}(J-\overline{J})-\nu_{\overline{i}}\{\mathcal{H}_jJ,\nu_k\overline{J}\}-\nu_{\overline{i}}[\mathcal{H}_j,\nu_k]J\\
\end{split}
\end{equation}

On the other hand, using the Jacobi identity, together with the fact that $\{\nu_{\overline{i}}J,\mathcal{H}_jJ\}$ is a function that descends to the base due to Proposition \ref{flatspecial}, we find
\begin{equation}
    \begin{split}
-4\pi^2\omega_3(\text{Ham}_{\nu_{\overline{i}}J},\text{Ham}_{J_{jk}-[\mathcal{H}_j,\nu_k]J})&=\{\nu_{\overline{i}}J,J_{jk}-[\mathcal{H}_j,\nu_k]J\}\\
&=\{\nu_{\overline{i}}J,\mathcal{H}_j\nu_k(J-\overline{J})\}-\{\nu_{\overline{i}}J,\{\mathcal{H}_jJ,\nu_k\overline{J}\}\}-\{\nu_{\overline{i}}J,[\mathcal{H}_j,\nu_k]J\}\\
&=\{\nu_{\overline{i}}J,\mathcal{H}_j\nu_k(J-\overline{J})\}-\{\mathcal{H}_jJ,\{\nu_{\overline{i}}J,\nu_k\overline{J}\}\}-\{\nu_{\overline{i}}J,[\mathcal{H}_j,\nu_k]J\}\,.\\
\end{split}
\end{equation}
The remaining term gives, using that $[\nu_{\overline{i}},[\mathcal{H}_j,\nu_k]]=0$,
\begin{equation}
    4\pi^2\omega_3(\text{Ham}_{\nu_{\overline{i}}J},[\mathcal{H}_j,\nu_k])=[\mathcal{H}_j,\nu_k]\nu_{\overline{i}}J=\nu_{\overline{i}}[\mathcal{H}_j,\nu_k]J
\end{equation}

Combining everything we obtain 
\begin{equation}
    \begin{split}
        \omega_3(v_i,[h_j,\overline{v_k}])&=\frac{1}{4}\frac{\partial \tau_{ki}}{\partial Z^j}+\mathcal{H}_j\nu_{\overline{i}}\nu_k(J-\overline{J})-\{\mathcal{H}_jJ,\{\nu_{\overline{i}}J,\nu_k\overline{J}\}\}\\
        &\quad +\{\nu_{\overline{i}}J,\mathcal{H}_j\nu_k(J-\overline{J})\}-\nu_{\overline{i}}\{\mathcal{H}_jJ,\nu_k\overline{J}\}-\{\nu_{\overline{i}}J,[\mathcal{H}_j,\nu_k]J\}\,.
    \end{split}
\end{equation}
We rewrite the last three terms as follows, using that $[\mathcal{H}_j,\nu_{\overline{i}}]=[\nu_j,\nu_{\overline{i}}]=0$, and that $\{\nu_{\overline{i}}J,\mathcal{H}_jJ\}$ is a function that descends to the base
\begin{equation}
    \begin{split}
        &\{\nu_{\overline{i}}J,\mathcal{H}_j\nu_k(J-\overline{J})\}-\nu_{\overline{i}}\{\mathcal{H}_jJ,\nu_k\overline{J}\}-\{\nu_{\overline{i}}J,[\mathcal{H}_j,\nu_k]J\}\\
        &=\{\nu_{\overline{i}}J,\mathcal{H}_j\nu_kJ\}-\{\nu_{\overline{i}}J,\mathcal{H}_j\nu_k\overline{J}\}-\{\mathcal{H}_j\nu_{\overline{i}}J,\nu_k\overline{J}\}-\{\mathcal{H}_jJ,\nu_{\overline{i}}\nu_k\overline{J}\}-\{\nu_{\overline{i}}J,[\mathcal{H}_j,\nu_k]J\}\\
        &=\{\nu_{\overline{i}}J,\nu_k\mathcal{H}_jJ\}-\{\nu_{\overline{i}}J,\mathcal{H}_j\nu_k\overline{J}\}-\{\mathcal{H}_j\nu_{\overline{i}}J,\nu_k\overline{J}\}-\{\mathcal{H}_jJ,\nu_{\overline{i}}\nu_k\overline{J}\}\\
        &=-\{\nu_{\overline{i}}\nu_kJ,\mathcal{H}_jJ\}-\mathcal{H}_j\{\nu_{\overline{i}}J,\nu_k\overline{J}\}-\{\mathcal{H}_jJ,\nu_{\overline{i}}\nu_k\overline{J}\}\\
        &=-\mathcal{H}_j\{\nu_{\overline{i}}J,\nu_k\overline{J}\}+\{\mathcal{H}_jJ,\nu_{\overline{i}}\nu_k(J-\overline{J})\}\\
    \end{split}
\end{equation}
Hence, overall we have 
\begin{equation}
    \begin{split}
        \omega_3(v_i,[h_j,\overline{v_k}])&=\frac{1}{4}\frac{\partial \tau_{ki}}{\partial Z^j}+\mathcal{H}_j\nu_{\overline{i}}\nu_k(J-\overline{J})-\{\mathcal{H}_jJ,\{\nu_{\overline{i}}J,\nu_k\overline{J}\}\}\\
        &\quad -\mathcal{H}_j\{\nu_{\overline{i}}J,\nu_k\overline{J}\}+\{\mathcal{H}_jJ,\nu_{\overline{i}}\nu_k(J-\overline{J})\}\,.
    \end{split}
\end{equation}
The latter is easily seen to be equal to $h_j\omega_3(v_i,\overline{v_k})$, so
\begin{equation}
    h_j\omega_3(v_i,\overline{v_k})=\omega_3(v_i,[h_j,\overline{v_k}])\,.
\end{equation}
\end{proof}
\begin{lemma}\label{cond4}
    Given a special Joyce structure, the following holds
    \begin{equation}
\omega_3([v_i,\overline{h_k}],v_j)=\omega_3([v_j,\overline{h_k}],v_i)
    \end{equation}
\end{lemma}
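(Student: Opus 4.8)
The plan is to proceed as in the proofs of Lemmas \ref{cond1}--\ref{cond3}: expand the bracket $[v_i,\overline{h_k}]$, pair it with $v_j$ through $\omega_3$, and check that the result is invariant under $i\leftrightarrow j$. First I would use \eqref{hvdef} to write $v_i=2\pi\mathrm{i}(\nu_{\overline{i}}+\text{Ham}_{\nu_{\overline{i}}J})$ and $\overline{h_k}=\mathcal{H}_{\overline{k}}+\text{Ham}_{\mathcal{H}_{\overline{k}}\overline{J}}$, and expand the commutator using the flatness of $\mathcal{H}$, the identities $[\mathcal{H}_X,\text{Ham}_g]=\text{Ham}_{\mathcal{H}_Xg}$, $[\nu_X,\text{Ham}_g]=\text{Ham}_{\nu_Xg}$, $[\text{Ham}_f,\text{Ham}_g]=\text{Ham}_{\{f,g\}}$, and the fact that (by holomorphy of $\tau_{ij}$) $[\nu_{\overline{i}},\mathcal{H}_{\overline{k}}]$ is the vertical vector field $\tfrac12\,\overline{\partial\tau_{il}/\partial Z^k}\,\partial/\partial\varphi_l$. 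This yields
$$[v_i,\overline{h_k}]=2\pi\mathrm{i}\big([\nu_{\overline{i}},\mathcal{H}_{\overline{k}}]+\text{Ham}_{G_{\overline{i}\overline{k}}}\big),\qquad G_{\overline{i}\overline{k}}:=\nu_{\overline{i}}(\mathcal{H}_{\overline{k}}\overline{J})-\mathcal{H}_{\overline{k}}(\nu_{\overline{i}}J)+\{\nu_{\overline{i}}J,\mathcal{H}_{\overline{k}}\overline{J}\};$$
in particular $[v_i,\overline{h_k}]$ is a section of $V_\pi\otimes\mathbb{C}$.

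Since $[v_i,\overline{h_k}]$ and $v_j$ are both vertical and, by the second condition of Definition \ref{specialJoycedef} together with Lemma \ref{11obstruction} (which gives $\omega^{\nu}(v_X,v_Y)=0$), the form $\omega_3$ restricts to $-\tfrac{1}{4\pi^2}\omega^{\nu}$ on $V_\pi\otimes\mathbb{C}$, I would then compute $\omega_3([v_i,\overline{h_k}],v_j)$ through $\omega^{\nu}$, using $\omega^{\nu}(V,\text{Ham}_g)=Vg$ and $\omega^{\nu}(\text{Ham}_f,\text{Ham}_g)=\{f,g\}$ with $v_j=2\pi\mathrm{i}(\nu_{\overline{j}}+\text{Ham}_{\nu_{\overline{j}}J})$. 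Collecting the terms that contain no Poisson bracket and repeatedly reshuffling with $[\nu_{\overline{i}},\nu_{\overline{j}}]=0$ and $[\nu_{\overline{i}},[\nu_{\overline{j}},\mathcal{H}_{\overline{k}}]]=0$, one finds that this part collapses to $c\,\overline{\partial\tau_{ij}/\partial Z^k}+\nu_{\overline{i}}\nu_{\overline{j}}\mathcal{H}_{\overline{k}}(J-\overline{J})$ for a numerical constant $c$, which is manifestly symmetric in $i\leftrightarrow j$ since $\tau_{ij}=\tau_{ji}$ and $\nu_{\overline{i}}\nu_{\overline{j}}=\nu_{\overline{j}}\nu_{\overline{i}}$.

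It remains to show the Poisson-bracket part is symmetric in $i\leftrightarrow j$. After inserting $\nu_{\overline{j}}\mathcal{H}_{\overline{k}}\overline{J}=\mathcal{H}_{\overline{k}}\nu_{\overline{j}}\overline{J}+[\nu_{\overline{j}},\mathcal{H}_{\overline{k}}]\overline{J}$ (and likewise with $i$), two groups of terms are visibly symmetric: the ``curvature'' terms, which combine to $-\{\nu_{\overline{i}}J,[\nu_{\overline{j}},\mathcal{H}_{\overline{k}}]\overline{J}\}-\{\nu_{\overline{j}}J,[\nu_{\overline{i}},\mathcal{H}_{\overline{k}}]\overline{J}\}$, and the double-bracket term $\{\{\nu_{\overline{i}}J,\mathcal{H}_{\overline{k}}\overline{J}\},\nu_{\overline{j}}J\}$, which by the Jacobi identity and $\{\nu_{\overline{i}}J,\nu_{\overline{j}}J\}=0$ equals $\{\nu_{\overline{i}}J,\{\mathcal{H}_{\overline{k}}\overline{J},\nu_{\overline{j}}J\}\}$ and is symmetric in $i\leftrightarrow j$ by a second application of the Jacobi identity together with $\{\nu_{\overline{i}}J,\nu_{\overline{j}}J\}=0$. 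The antisymmetric part of the remaining Poisson terms reduces, after cancellations, to $\{\nu_{\overline{j}}J,\mathcal{H}_{\overline{k}}\nu_{\overline{i}}J\}+\{\mathcal{H}_{\overline{k}}\nu_{\overline{j}}J,\nu_{\overline{i}}J\}=\mathcal{H}_{\overline{k}}\{\nu_{\overline{j}}J,\nu_{\overline{i}}J\}$, using that $\mathcal{H}_{\overline{k}}$, having no $\varphi$-dependence in the coordinates $(x^i,y_i,\varphi^i,\varphi_i)$, is a derivation of the vertical Poisson bracket; this vanishes by the defining property $\{\nu_{\overline{i}}J,\nu_{\overline{j}}J\}=0$ of a special Joyce structure. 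Hence $\omega_3([v_i,\overline{h_k}],v_j)=\omega_3([v_j,\overline{h_k}],v_i)$.

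The difficulty is organizational rather than conceptual: one must carefully expand the nested commutators and Poisson brackets and keep reorganizing them using $[\nu_{\overline{i}},\nu_{\overline{j}}]=0$, $[\mathcal{H}_{\overline{k}},\nu_{\overline{i}}]\propto\overline{\partial\tau/\partial Z}$, and the derivation properties of $\mathcal{H}_{\overline{k}}$ and $\nu_{\overline{i}}$ on the vertical Poisson bracket, so that every potentially non-symmetric term is eventually pinned to $\{\nu_{\overline{i}}J,\nu_{\overline{j}}J\}=0$; unlike the other conditions, no use of the Pleba\'nski-like equations \eqref{Plebanski-like} is needed here. Together with Lemmas \ref{closedsimp}--\ref{cond3}, this shows the $\omega_i$ are closed and completes the proof of Theorem \ref{maintheorem}.
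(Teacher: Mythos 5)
Your proposal is correct and follows essentially the same route as the paper's proof: expand $[v_i,\overline{h_k}]$ into $2\pi\mathrm{i}([\nu_{\overline{i}},\mathcal{H}_{\overline{k}}]+\text{Ham}_{J_{\overline{i}\overline{k}}})$ with the same potential $J_{\overline{i}\overline{k}}$, pair with $v_j$ via the vertical symplectic form, observe the non-Poisson terms are symmetric from $[\nu_{\overline{i}},\nu_{\overline{j}}]=0$ and the symmetry of $\partial\tau_{ij}/\partial Z^k$, and pin the remaining Poisson terms to $\{\nu_{\overline{i}}J,\nu_{\overline{j}}J\}=0$ via the Jacobi identity and the Leibniz property of $\mathcal{H}_{\overline{k}}$. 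Your bookkeeping of the ``curvature'' terms via $[\nu_{\overline{j}},\mathcal{H}_{\overline{k}}]\overline{J}$ differs only cosmetically from the paper's grouping, and your observation that the Pleba\'nski-like equations are not needed here matches the paper.
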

\begin{proof}
    Following a similar computation to before, we now have
    \begin{equation}
        [v_i,\overline{h}_k]=2\pi \mathrm{i}([\nu_{\overline{i}},\mathcal{H}_{\overline{k}}]+\text{Ham}_{J_{\overline{i}\overline{k}}})
    \end{equation}
    where
    \begin{equation}
        J_{\overline{i}\overline{k}}:=\nu_{\overline{i}}\mathcal{H}_{\overline{k}}\overline{J}-\mathcal{H}_{\overline{k}}\nu_{\overline{i}}J + \{\nu_{\overline{i}}J,\mathcal{H}_{\overline{k}}\overline{J}\}\,.
    \end{equation}
    With this we find by similar computations from previous propositions
    \begin{equation}
    \begin{split}\omega_3([v_i,\overline{h_k}],v_j)&=-\frac{1}{4}\frac{\partial \overline{\tau_{ij}}}{\partial \overline{Z}^k}-\nu_{\overline{j}}J_{\overline{i}\overline{k}}+[\nu_{\overline{i}},\mathcal{H}_{\overline{k}}]\nu_{\overline{j}}J +\{J_{\overline{ik}},\nu_{\overline{j}}J\}\,.\\
    \end{split}
    \end{equation}
    Now note that the first term is symmetric in $i$ and $j$. Furthermore, expanding the next two terms we see that
    \begin{equation}
        -\nu_{\overline{j}}J_{\overline{ik}}+[\nu_{\overline{i}},\mathcal{H}_{\overline{k}}]\nu_{\overline{j}}J=-\nu_{\overline{j}}\nu_{\overline{i}}\mathcal{H}_{\overline{k}}\overline{J}+(\nu_{\overline{j}}\mathcal{H}_{\overline{k}}\nu_{\overline{i}}J+\nu_{\overline{i}}\mathcal{H}_{\overline{k}}\nu_{\overline{j}}J)-\mathcal{H}_{\overline{k}}\nu_{\overline{i}}\nu_{\overline{j}}J-\nu_{\overline{j}}\{\nu_{\overline{i}}J,\mathcal{H}_{\overline{k}}\overline{J}\}\,.
    \end{equation}
    In particular, using that $[\nu_{\overline{i}},\nu_{\overline{j}}]=0$ we see that all the terms above are symmetric in $i$ and $j$, except possibly the last. Hence, to check the identity that we want, we just need to show that the following expression is symmetric in $i$ and $j$
    \begin{equation}
        -\nu_{\overline{j}}\{\nu_{\overline{i}}J,\mathcal{H}_{\overline{k}}\overline{J}\}+\{J_{\overline{ik}},\nu_{\overline{j}}J\}\,.
    \end{equation}
    We have
    \begin{equation}
        \begin{split}
            -\nu_{\overline{j}}\{\nu_{\overline{i}}J,\mathcal{H}_{\overline{k}}\overline{J}\}+\{J_{\overline{ik}},\nu_{\overline{j}}J\}&=-\{\nu_{\overline{j}}\nu_{\overline{i}}J,\mathcal{H}_{\overline{k}}\overline{J}\}+(-\{\nu_{\overline{i}}J,\nu_{\overline{j}}\mathcal{H}_{\overline{k}}\overline{J}\}+\{\nu_{\overline{i}}\mathcal{H}_{\overline{k}}\overline{J},\nu_{\overline{j}}J\})-\{\mathcal{H}_{\overline{k}}\nu_{\overline{i}}J,\nu_{\overline{j}}J\}\\
            &\quad + \{\{\nu_{\overline{i}}J,\mathcal{H}_{\overline{k}}\overline{J}\}, \nu_{\overline{j}}J\}\,.
        \end{split}
    \end{equation}
    The first term and the term in parenthesis are clearly symmetric in $i$ and $j$. 
    On the other hand, $\{\nu_{\overline{i}}J,\nu_{\overline{j}}J\}=0$ by the Definition \ref{specialJoycedef} and Lemma \ref{11obstruction}, so we find that 
    \begin{equation}
        -\{\mathcal{H}_{\overline{k}}\nu_{\overline{i}}J,\nu_{\overline{j}}J\}=-\mathcal{H}_{\overline{k}}\{\nu_{\overline{i}}J,\nu_{\overline{j}}J\}+\{\nu_{\overline{i}}J,\mathcal{H}_{\overline{k}}\nu_{\overline{j}}J\}= -\{\mathcal{H}_{\overline{k}}\nu_{\overline{j}}J,\nu_{\overline{i}}J\}\,,
    \end{equation}
    while from the Jacobi identity and the fact $\{\nu_{\overline{i}}J,\nu_{\overline{j}}J\}=0$ it follows that 
    \begin{equation}
        \{\{\nu_{\overline{i}}J,\mathcal{H}_{\overline{k}}\overline{J}\},\nu_{\overline{j}}J\}=-\{\{\mathcal{H}_{\overline{k}}\overline{J},\nu_{\overline{j}}J,\},\nu_{\overline{i}}J\}-\{\{\nu_{\overline{j}}J,\nu_{\overline{i}}J\},\mathcal{H}_{\overline{k}}\overline{J}\}= \{\{\nu_{\overline{j}}J,\mathcal{H}_{\overline{k}}\overline{J}\},\nu_{\overline{i}}J\}
    \end{equation}
    so the last summand is also symmetric. The lemma then follows.
\end{proof}

From the previous Lemmas \ref{closedsimp}, \ref{cond1}, \ref{cond2}, \ref{cond3} and \ref{cond4} we find that the real $2$-forms $\omega_i$ for $i=1,2,3$ are closed.  In particular, it follows that $(N,g,I_1,I_2,I_3)$ is hyperk\"{a}hler and Theorem \ref{maintheorem} is proved. 
\section{Examples of special Joyce structures}\label{examplesec}
In this section we discuss two examples. One is the case of a trivial special Joyce structure, which recovers the semi-flat HK metric \cite{SK,ACD} associated to an ASK manifold; while the second concern HK metrics associated to uncoupled variations of BPS structures over an ASK manifold \cite{GMN,CT}. 
\subsection{The trivial special Joyce structure and the semi-flat HK metric}\label{trivialJoyce}

We start with the easiest case, where we take the family $\mathcal{A}^{\zeta}$ from \eqref{famconn} determined by 
\begin{equation}
    J=0\,.
\end{equation}
In this case, the bundle maps $h$ and $v$ from \eqref{hvdef} reduce to 
\begin{equation}
    h_X=\mathcal{H}_X, \quad v_X=2\pi \mathrm{i}\nu_{\overline{X}}
\end{equation}
and the family of complexified Ehresmann connections reduces to 
\begin{equation}\label{semiflatehresmann}
\begin{split}
\mathcal{A}_{X}^{\zeta}&=\mathcal{H}_X+\frac{2\pi\mathrm{i} }{\zeta}\nu_{X}\\
\mathcal{A}_{\overline{X}}^{\zeta}&=\mathcal{H}_{\overline{X}}+2\pi\mathrm{i}\zeta\nu_{\overline{X}}\,.
\end{split}
\end{equation}
It is easy to check that $h$ and $v$ from above satisfy \eqref{non-deg}. Furthermore, the flatness conditions of Proposition \ref{flatspecial} are trivially satisfied, while the second point of Definition \ref{specialJoycedef} follows from Lemma \ref{11obstruction}. Hence, the corresponding $\mathcal{A}^{\zeta}$ gives a special Joyce structure. \\

The K\"{a}hler form $\omega_3$ is simply given by (recall \eqref{omega3def} and Lemma \ref{keyexp})

\begin{equation}\label{omedared}
    4\pi^2\omega_3(\nu_{\overline{X}},\nu_{Y})=\omega_3(\mathcal{H}_Y,\mathcal{H}_{\overline{X}}):=\omega(Y,\overline{X}), \quad \omega_3(\mathcal{H}_X,\nu_{\overline{Y}})=\omega_3(\mathcal{H}_{\overline{X}},\nu_{Y})=0\,.
\end{equation}
where $X,Y$ are local sections of $T^{1,0}M\to M$. In particular, in terms of affine special coordinates $(x^i,y_i)$ on $M$ and the induced coordinates $(x^i,y_i,\varphi^i,\varphi_i)$ on $N=TM$, we obtain using \eqref{omega3def}, \eqref{omegadarboux}, \eqref{omegavertdarboux} and \eqref{omedared} that
\begin{equation}
    \omega_3=\mathrm{d}x^i\wedge \mathrm{d}y_i +\frac{1}{4\pi^2}\mathrm{d}\varphi_i\wedge \mathrm{d}\varphi^i\,.
\end{equation}
In terms of a conjugate system of holomorphic special coordinates $(Z^i)$ and $(Z_i)$ inducing $(x^i,y_i)$, we can further write using Lemma \ref{usualsKident}
\begin{equation}
    \omega_3=\frac{\mathrm{i}}{2}\text{Im}(\tau_{ij})\mathrm{d}Z^i\wedge \mathrm{d}\overline{Z}^j +\frac{1}{4\pi^2}\mathrm{d}\varphi_i\wedge \mathrm{d}\varphi^i\,.
\end{equation}
On the other hand, $\Omega=\omega_1+\mathrm{i}\omega_2$ can be determined by using \eqref{holomega_3}. One finds in local coordinates that
\begin{equation}
    \Omega=-\frac{1}{2\pi}\mathrm{d}Z^i\wedge (\mathrm{d}\varphi_i+\tau_{ij}\mathrm{d}\varphi^j)\,.
\end{equation}

Now in order to compare with the semi-flat HK structure, we remark that this structure is more naturally defined on $T^*M$ instead of $TM$. In order to relate them, we use the natural identification $TM\cong T^*M$ given by
\begin{equation}
    X\to \omega(X,-)\,,
\end{equation}
where $\omega$ is the K\"{a}hler form of the ASK manifold. \\

With such an identification, if $(x^i,y_i)$ are affine special coordinates on $M$, $(x^i,y_i,\varphi^i,\varphi_i)$ the induced coordinates on $TM$, and $(x^i,y_i,\theta_i,\theta^i)$ the coordinates induced on $T^*M$, then the identification by $\omega$ is given in terms of the above coordinates by
\begin{equation}
    (x^i,y_i,\varphi^i,\varphi_i)\to (x^i,y_i,-\varphi_i,\varphi^i)=(x^i,y_i,\theta_i,\theta^i)\,.
\end{equation}
In particular, in the above coordinates on $T^*M$ the induced HK structure on $T^*M$ via $\omega:TM\cong T^*M$ has the local form
\begin{equation}
    \omega_3=\mathrm{d}x^i\wedge \mathrm{d}y_i -\frac{1}{4\pi^2}\mathrm{d}\theta_i\wedge \mathrm{d}\theta^i\,, \quad \Omega=\frac{1}{2\pi}\mathrm{d}Z^i\wedge (\mathrm{d}\theta_i -\tau_{ij}\mathrm{d}\theta^j)\,.
\end{equation}
Comparing with the formulas from  \cite[Equation (2.25) and Equation (2.26)]{CT}, one checks that the induced HK structure on $T^*M$ matches the usual semi-flat HK structure associated to an ASK manifold. 

\subsection{HK metrics associated to uncoupled variations of BPS structures}\label{uncoupled}

Here we present our main non-trivial example. In order to present this example, we give a brief review of variation of BPS structures and a slight reformulation of certain ASK geometries.

\subsubsection{Special period structures and ASK geometries}\label{specialperiodsec}

Consider a complex manifold $M$ of dimension $\text{dim}_{\mathbb{C}}(M)=n$. 

\begin{definition}A period structure over a complex manifold $M$ is a tuple $(M,\Gamma,Z)$ such that 
\begin{itemize}
    \item $\Gamma\to M$ is a bundle of lattices. Furthermore $\Gamma$ has a fiberwise integral skew-pairing 
    \begin{equation}
            \langle -, - \rangle_p: \Gamma_p \times \Gamma_p \to \mathbb{Z}\,, \quad p\in M\,.
        \end{equation}
\item $Z$ is a holomorphic section of $\text{Hom}(\Gamma,\mathbb{C})\to M$. If $\gamma$ is a local section of $\Gamma$, we can contract $Z$ and $\gamma$ to obtain a local holomorphic function on $M$. We denote this contraction by
        \begin{equation}
            Z_{\gamma}:=Z(\gamma)\,.
        \end{equation}
\end{itemize}
\end{definition}

Now we explain the notion of special period structure over $M$. As we will see below, this should be thought as encoding an ASK structure on $M$ where the flat connection $\nabla$ comes from a certain ``integral" structure on $TM\to M$.
\begin{definition}\label{SPSdef}
    A period structure $(M,\Gamma,Z)$ is special if 
    \begin{itemize}
        \item $\Gamma\subset TM$ is a bundle lattices of rank $2n=\text{dim}(M)$ (i.e. $\Gamma_p\otimes \mathbb{R}=T_pM$ for all $p\in M$) and furthermore the pairing $\langle-,-\rangle$ is non-degenerate.
        We assume that around any point $p\in M$ we can find a local Darboux frame $(\gamma_i,\gamma^i)$ of $\langle -, - \rangle$. Namely
        \begin{equation}
            \langle \gamma_i,\gamma^j\rangle=\delta_i^j, \quad \langle \gamma_i,\gamma_j\rangle=\langle \gamma^i,\gamma^j\rangle=0\,.
        \end{equation}
        We denote by $\omega$ the non-degenerate 2-form on $M$ induced by $\langle -,-\rangle$.
        \item If $I$ is the complex structure of $M$, then $\omega$ is compatible with $I$ (i.e. $\omega(I-,I-)=\omega(-,-)$).
        \item If $\nabla$ is the flat connection on $M$ induced by $\Gamma \subset TM$ and $\xi^{1,0}$ is the complex vector field on $M$ determined by 
        \begin{equation}
            \frac{1}{2}Z=\omega(\xi^{1,0},-)\,,
        \end{equation}
        then we have
        \begin{equation}\label{holeuler}
            \pi^{1,0}=\nabla \xi^{1,0},
        \end{equation}
        where 
        \begin{equation}
            \pi^{1,0}:TM\otimes \mathbb{C}\to T^{1,0}M
        \end{equation}
        is the canonical projection into $(1,0)$ vectors with respect to $I$. 
    \end{itemize}
\end{definition}
\begin{proposition}\label{integralASK}
    Given a special period structure $(M,\Gamma,Z)$, one obtains an ASK structure $(M,\omega,\nabla)$ with K\"{a}hler form $\omega$ induced from the pairing $\langle-,-\rangle$ and flat connection $\nabla$ induced from $\Gamma$. Furthermore, given a local Darboux frame $(\gamma_i,\gamma^i)$ of $\Gamma\to M$, the associated holomorphic functions $(Z^i=Z_{\gamma^i})$, $(Z_i=Z_{\gamma_i})$ are conjugate systems of holomorphic special coordinates, and the affine special coordinate system $(x^i,y_i)$ induced from $(Z^i)$ and $(Z_i)$ satisfies that 
    \begin{equation}\label{flatcoords}
        \gamma_i=\partial_{x^i}, \quad \gamma^i=\partial_{y_i}\,.
    \end{equation}
\end{proposition}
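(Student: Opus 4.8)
The plan is to proceed in two stages: first verify that $(M,I,\omega,\nabla)$ satisfies the ASK axioms, and then identify the special coordinate data. The crucial observation, which essentially runs the argument in the proof of Lemma \ref{coordlemma} in reverse, is that condition \eqref{holeuler} forces $\nabla$ to be torsion-free and to satisfy $\mathrm{d}_\nabla I = 0$. Indeed, for any vector field the expression $\mathrm{d}_\nabla(\nabla\xi^{1,0})$ is the curvature $2$-form of $\nabla$ acting on $\xi^{1,0}$, hence vanishes by flatness; so \eqref{holeuler} gives $\mathrm{d}_\nabla \pi^{1,0} = 0$. Writing $\pi^{1,0} = \tfrac12(\mathrm{Id}_{TM} - \mathrm{i}I)$ and separating real and imaginary parts (both $\mathrm{Id}_{TM}$ and $I$ being real operators and $\mathrm{d}_\nabla$ real), we obtain $\mathrm{d}_\nabla(\mathrm{Id}_{TM}) = 0$ — i.e.\ the torsion-freeness \eqref{torfree} — and $\mathrm{d}_\nabla I = 0$.

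For the remaining ASK axioms, fix $p\in M$ and a local Darboux frame $(\gamma_i,\gamma^i)$ of $\langle-,-\rangle$ consisting of sections of $\Gamma$, as provided by Definition \ref{SPSdef}. A continuously varying frame of the lattice bundle $\Gamma$ is locally constant in any local integral trivialization, hence $\nabla$-parallel; consequently the dual coframe $(\gamma_i^*,\gamma^{i*})$ is $\nabla$-parallel as well. Since $\omega$ is the $2$-form induced by $\langle-,-\rangle$ one has $\omega = \gamma_i^* \wedge \gamma^{i*}$, so $\nabla\omega = 0$, and using the torsion-freeness established above, $\mathrm{d}\omega = 0$. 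Together with the compatibility $\omega(I-,I-)=\omega(-,-)$ assumed in Definition \ref{SPSdef} and the integrability of $I$, this shows $(M,I,\omega)$ is pseudo-K\"ahler; combined with $\nabla$ flat and torsion-free, $\nabla\omega=0$ and $\mathrm{d}_\nabla I=0$, we conclude that $(M,I,\omega,\nabla)$ is ASK.

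It then remains to treat the coordinate statement. Expanding the complex vector field $\xi^{1,0}$ of Definition \ref{SPSdef} in the flat Darboux frame, write $\xi^{1,0} = \tfrac12(A^i\gamma_i - A_i\gamma^i)$ for holomorphic functions $A^i, A_i$. Pairing $\tfrac12 Z = \omega(\xi^{1,0},-)$ against $\gamma^i$ and against $\gamma_i$, and using $\omega(\gamma_j,\gamma^i)=\delta_j^i$, $\omega(\gamma^j,\gamma_i)=-\delta_i^j$, $\omega(\gamma_i,\gamma_j)=\omega(\gamma^i,\gamma^j)=0$, yields $A^i = Z(\gamma^i) = Z_{\gamma^i}$ and $A_i = Z(\gamma_i) = Z_{\gamma_i}$. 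Thus $(\gamma_i,\gamma^i)$ and $\xi^{1,0}$ are precisely the flat Darboux frame and complex vector field appearing in \eqref{conjugateholspecial}, so by (the proof of) Lemma \ref{coordlemma} the functions $Z^i := Z_{\gamma^i}$ and $Z_i := Z_{\gamma_i}$ form conjugate systems of holomorphic special coordinates, with $\mathrm{Re}(Z^i)=x^i$, $-\mathrm{Re}(Z_i)=y_i$ the associated affine special coordinates. Finally, $2\mathrm{Re}(\pi^{1,0})=\mathrm{Id}_{TM}$ together with $\pi^{1,0}=\tfrac12(\mathrm{d}Z^i\otimes\gamma_i-\mathrm{d}Z_i\otimes\gamma^i)$ gives $\mathrm{Id}_{TM} = \mathrm{d}x^i\otimes\gamma_i + \mathrm{d}y_i\otimes\gamma^i$, i.e.\ $(\mathrm{d}x^i,\mathrm{d}y_i)$ is the coframe dual to $(\gamma_i,\gamma^i)$; equivalently $\gamma_i=\partial_{x^i}$, $\gamma^i=\partial_{y_i}$, which is \eqref{flatcoords}.

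The argument is largely bookkeeping, and most steps reduce to direct substitution into results already established. The only genuinely substantive point is the first one — deducing torsion-freeness and $\mathrm{d}_\nabla I=0$ from \eqref{holeuler} — since without it one has neither $\mathrm{d}\omega=0$ nor the applicability of Lemma \ref{coordlemma}. A secondary point that must be stated carefully is that the chosen Darboux frame of $\langle-,-\rangle$, being a frame of the lattice $\Gamma$, is automatically $\nabla$-flat, so that $\omega$ is a wedge of $\nabla$-parallel $1$-forms.
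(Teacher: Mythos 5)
Your proposal is correct and follows essentially the same route as the paper: both derive torsion-freeness and $\mathrm{d}_\nabla I=0$ from $\mathrm{d}_\nabla\pi^{1,0}=\mathrm{d}_\nabla\nabla\xi^{1,0}=0$ (flatness of $\nabla$), identify $\xi^{1,0}=\tfrac12(Z_{\gamma^i}\gamma_i-Z_{\gamma_i}\gamma^i)$ by pairing against the Darboux frame, and extract \eqref{flatcoords} from $2\,\mathrm{Re}(\pi^{1,0})=\mathrm{Id}_{TM}$, deferring the conjugate-coordinate claim to the argument of Lemma \ref{coordlemma}. The only cosmetic difference is the ordering (you close $\mathrm{d}\omega=0$ via $\nabla\omega=0$ plus torsion-freeness, while the paper reads it off from $\omega=\mathrm{d}x^i\wedge\mathrm{d}y_i$); both are valid.
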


\begin{proof}
    Since $\omega$ is induced from the pairing $\langle -,- \rangle$ on $\Gamma$, and $\nabla$ is induced from $\Gamma$, then we clearly have $\nabla \omega=0$\,.
    Now note that if $(\gamma_i,\gamma^i)$ is a local Darboux frame of $(\Gamma,\langle-,-\rangle)$ and $(\delta^i,\delta_i)$ denotes the dual frame on $\Gamma^*\subset T^*M$ then
    \begin{equation}
        Z=Z^i\delta_i +Z_i\delta^i, \quad \omega=\delta^i\wedge \delta_i
    \end{equation}
    implies that 
    \begin{equation}
        \xi^{1,0}=\frac{1}{2}(Z^i\gamma_i-Z_i\gamma^i)\,.
    \end{equation}
    Since $\pi^{1,0}=\nabla \xi^{1,0}$ and $2\text{Re}(\pi^{1,0})=\text{Id}_{TM}$, it follows that $(x^i=\text{Re}(Z^i), y_i=-\text{Re}(Z_i))$ is a coordinate system such that \eqref{flatcoords} holds, so in particular it must be a flat Darboux coordinate system for $\omega$, and hence and affine special coordinate system. We then obtain that 
    \begin{equation}
    \omega=\mathrm{d}x^i\wedge \mathrm{d}y_i\,,
    \end{equation}
so $\omega$ is closed, and hence symplectic. Since $\omega$ is compatible with $I$, we then obtain that $(M,I,\omega)$ is pseudo-K\"{a}her. Furthermore, the flatness of $\nabla$ implies $\mathrm{d}_{\nabla}^2=0$ (recall that $\mathrm{d}_{\nabla}$ denotes the extension of $\nabla$ to higher degree forms valued in $TM$, namely $\mathrm{d}_{\nabla}:\Omega^k(M,TM)\to \Omega^{k+1}(M,TM)$), and hence 
    \begin{equation}
        \pi^{1,0}=\frac{1}{2}(1_{TM}-\mathrm{i}I)=\nabla \xi^{1,0}=\mathrm{d}_{\nabla}\xi^{1,0}
    \end{equation}
    implies that
    \begin{equation}
        0=\mathrm{d}_{\nabla}\pi^{1,0}=\frac{1}{2}(\mathrm{d}_{\nabla}(1_{TM})-\mathrm{i}\mathrm{d}_{\nabla}(I)) \implies \mathrm{d}_{\nabla}(1_{TM})=0, \quad \mathrm{d}_{\nabla}(I)=0\,.
    \end{equation} 
    The condition $\mathrm{d}_{\nabla}(1_{TM})=0$ is equivalent to the torsion freeness of $\nabla$ (which already follows from previous arguments), while $\mathrm{d}_{\nabla}(I)=0$ is the remaining condition needed to obtain an ASK structure. Hence, we conclude that $(M,I,\omega,\nabla)$ is affine special K\"{a}hler. 
    It then follows from the same argument given in Lemma \ref{coordlemma} that $(Z^i)$ and $(Z_i)$ are conjugate systems of holomorphic special coordinates.
\end{proof}

\subsubsection{Variations of BPS structures}

Variations of BPS structures were introduced in \cite{VarBPS}. As mentioned in the introduction, they can be thought as abstracting certain natural structures associated to a triangulated 3d Calabi-Yau category, and its associated Donaldson-Thomas invariants and stability condition space. From the physics perspective, they can also be thought as abstracting natural structures associated to 4d $\mathcal{N}=2$ supersymmetric field theories and their BPS states \cite{GMN}.

\begin{definition}\label{varBPSdef} A variation of BPS structures is a tuple $(M,\Gamma,Z,\Omega)$ such that
\begin{itemize}
    \item $(M,\Gamma,Z)$ is a period structure.
    \item $\Omega: \Gamma\setminus \{0\} \to \mathbb{Q}$ is a function of sets satisfying the Kontsevich-Soibelman wall-crossing formula (see \cite{KS,VarBPS}) and $\Omega(\gamma)=\Omega(-\gamma)$. We will not state the wall-crossing formula, since it requires to introduce several notions and we will restrict to a simpler case where the statement of the wall-crossing formula becomes easier.\footnote{Roughly speaking, the numbers $\Omega(\gamma)$ jump along a real codimension $1$ subset of $M$ determined by $(M,\Gamma,Z)$. However, they do not jump arbitrarily, but the jump is uniquely determined by the Kontsevich-Soibelman wall-crossing formula. } The numbers $\Omega(\gamma)$ are called the BPS indices.  
    \item Support property: Given any compact set $K\subset M$ and a choice of covariant norm $|-|$ on $\Gamma|_K\otimes _{\mathbb{Z}}\mathbb{R}$, there should be a constant $C$ such that for any $\gamma \in \Gamma|_K\cap\text{Supp}(\Omega)$, we have
    \begin{equation}
        |Z_{\gamma}|>C|\gamma|\,.
    \end{equation}
    Here $\text{Supp}(\Omega)$ denotes the set of $\gamma \in \Gamma$ such that $\Omega(\gamma)\neq 0$.
    \item Convergence property: for any $R>0$, the series
    \begin{equation}
        \sum_{\gamma\in \Gamma|_p}|\Omega(\gamma)|e^{-R|Z_{\gamma}|}
    \end{equation}
    converges normally on compact subsets of $M$.
\end{itemize}
\end{definition}
\begin{remark}
    Our support and convergence property are stronger than as stated in \cite{VarBPS}, but the same as in \cite{CT}. We use this stronger version to guarantee that certain infinite sums involving the $\Omega(\gamma)$ below give rise to smooth functions. We do not rule out that weaker assumptions also guarantee the above. 
\end{remark}

\begin{definition}
    A variation of BPS structures $(M,\Gamma,Z,\Omega)$ is uncoupled (or mutually local) if for any $p\in M$ and  $\gamma,\gamma'\in \Gamma_p$
    \begin{equation}
        \Omega(\gamma),\Omega(\gamma')\neq 0 \implies \langle \gamma,\gamma'\rangle=0\,.
    \end{equation}
    We say that $(M,\Gamma,Z,\Omega)$ is coupled if it is not uncoupled.  
\end{definition}
In the case of an uncoupled variation of BPS structures, the wall-crossing formula implies that $\Omega(\gamma)$ must be locally constant and monodromy invariant.
\subsubsection{The special Joyce structure and the associated HK metric}\label{specialjoyceuncsec}

Our starting point is an uncoupled variation of BPS structures $(M,\Gamma,Z,\Omega)$ such that $(M,\Gamma,Z)$ is a special period structure. In particular, we have an ASK geometry on $M$ determined by $(M,\Gamma,Z)$ according to Proposition \ref{integralASK}. In \cite{CT} a hyperk\"{a}her structure on $T^*M$ is constructed from this data, based on previous work in the physics literature \cite{GMN}.\\

It is shown in \cite[Lemma 3.14]{CT} that in the above setting, we can always find a local Darboux frame $(\gamma_i,\gamma^i)$ of $\Gamma$ around any point $p\in M$ such that $\text{Supp}(\Omega)\subset \text{span}_{\mathbb{Z}}\{\gamma^i\}_{i=1,...,n}$. The frame $(\gamma_i,\gamma^i)$ induces a conjugate system of holomorphic special coordinates $(Z^i)$ and $(Z_i)$ on $M$ as in Proposition \ref{integralASK}, which in turn induces affine special coordinates $(x^i,y_i)$ on $M$. We denote as before $(x^i,y_i,\varphi^i,\varphi_i)$ the induced coordinates on $TM$ and $(x^i,y_i,\theta_i,\theta^i)$ the induced coordinates on $T^*M$. Furthermore, in such a situation if $\gamma\in \text{Supp}(\Omega)$ has the expression 
\begin{equation}
    \gamma=n_i(\gamma)\gamma^i,
\end{equation}
then we write
\begin{equation}
    \varphi_{\gamma}:=n_i(\gamma)\varphi^i\,,\quad  \theta_{\gamma}:=n_i(\gamma)\theta^i\,.
\end{equation}

Finally, in order to write the function $J$ specifying the special Joyce structure, we denote the modified Bessel functions of the second kind by $K_{\alpha}(x)$. The function $J$ is then defined by
\begin{equation}\label{Juncoupled}
    J=\frac{1}{2\pi \mathrm{i}}\sum_{\gamma}\Omega(\gamma)\sum_{n>0}\frac{e^{\mathrm{i}n\varphi_{\gamma}}}{n^2}K_0(2\pi n |Z_{\gamma}|)\,.
\end{equation}

In Appendix \ref{instgenapp} we discuss a relation between the function $J$ from \eqref{Juncoupled} and the instanton generating function $\mathcal{G}$ studied in \cite{BlackholeAP}, which in turn is related to the formula of the Pleba\'nski potential found in \cite{heavenlyJoyce}.
\begin{proposition}\label{Jim}
    The function $J$ given in \eqref{Juncoupled} defines a global smooth function on $N$ and it is imaginary valued. 
\end{proposition}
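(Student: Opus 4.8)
The plan is to verify the two claims separately: (1) that the series \eqref{Juncoupled} converges to a smooth function on all of $N = TM$, and (2) that it takes values in $\mathrm{i}\R$.

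For the smoothness and global well-definedness, I would first argue that the expression is independent of the chosen local Darboux frame $(\gamma_i,\gamma^i)$ adapted to $\mathrm{Supp}(\Omega)$. The only frame-dependent ingredients are the coordinates $\varphi_\gamma = n_i(\gamma)\varphi^i$ and $|Z_\gamma|$; but $\varphi_\gamma$ is intrinsically the $\R$-linear pairing of the tangent vector with $\gamma \in \Gamma_p$ using the splitting $TM \supset \Gamma$ (equivalently, $\varphi_\gamma$ is the natural fibre coordinate associated to the integral covector dual to $\gamma$), and $Z_\gamma = Z(\gamma)$ is intrinsically defined by the period structure. Since $\Omega(\gamma)$ is monodromy-invariant in the uncoupled case, the sum over $\gamma$ glues across coordinate patches, giving a globally defined expression on $N$. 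For convergence and smoothness I would invoke the support and convergence properties from Definition~\ref{varBPSdef}: on a compact set $K \subset M$, the support property gives $|Z_\gamma| > C|\gamma|$, so $K_0(2\pi n |Z_\gamma|)$ decays exponentially in $n|\gamma|$ (using the standard asymptotics $K_0(x) \sim \sqrt{\pi/2x}\,e^{-x}$ and the fact that $K_0$ is smooth and positive on $(0,\infty)$, with a mild logarithmic singularity only at $0$ which is never reached since $Z_\gamma \neq 0$ on $\mathrm{Supp}(\Omega)$ by the support property). Combined with the convergence property controlling $\sum_\gamma |\Omega(\gamma)| e^{-R|Z_\gamma|}$ and the extra $1/n^2$, the double series — and every series obtained by differentiating term-by-term in the $\varphi^i$ directions (which only brings down powers of $n$ and $n_i(\gamma)$, i.e. polynomial factors in $n|\gamma|$) and in the base directions $x^i,y_i$ (derivatives of $K_0(2\pi n|Z_\gamma|)$, again producing polynomial-in-$n|\gamma|$ prefactors times exponentially decaying Bessel functions) — converges normally on compact subsets. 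Hence $J$ is smooth on $N$.

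For the claim that $J$ is imaginary valued, I would exploit the symmetry $\Omega(\gamma) = \Omega(-\gamma)$ from Definition~\ref{varBPSdef}. Pairing the term for $\gamma$ with the term for $-\gamma$: under $\gamma \mapsto -\gamma$ we have $\Omega(-\gamma) = \Omega(\gamma)$, $|Z_{-\gamma}| = |Z_\gamma|$, and $\varphi_{-\gamma} = -\varphi_\gamma$, so $e^{\mathrm{i}n\varphi_{-\gamma}} = e^{-\mathrm{i}n\varphi_\gamma} = \overline{e^{\mathrm{i}n\varphi_\gamma}}$ (since $\varphi_\gamma$ is real-valued, as the $\varphi^i$ are real coordinates and $n_i(\gamma) \in \Z$). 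Therefore the sum of the $\gamma$ and $-\gamma$ contributions is proportional to $e^{\mathrm{i}n\varphi_\gamma} + e^{-\mathrm{i}n\varphi_\gamma} = 2\cos(n\varphi_\gamma)$, which is real, and everything else ($\Omega(\gamma)$, $K_0(2\pi n|Z_\gamma|)$, $1/n^2$) is real. Thus $\sum_\gamma \Omega(\gamma)\sum_{n>0} \frac{e^{\mathrm{i}n\varphi_\gamma}}{n^2}K_0(2\pi n|Z_\gamma|)$ is real-valued, and the overall prefactor $\frac{1}{2\pi\mathrm{i}}$ makes $J$ take values in $\mathrm{i}\R$.

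The main obstacle is the smoothness argument: I need to be careful that term-by-term differentiation is justified, which requires the differentiated series to converge uniformly on compacta. The key point is that each derivative — whether in a fibre or base direction — multiplies the general term by at most a polynomial in $n$ and in the lattice components $n_i(\gamma)$ (hence, up to choosing a norm, a polynomial in $n|\gamma|$), while the Bessel factor $K_0(2\pi n|Z_\gamma|)$ together with the support property $|Z_\gamma| > C|\gamma|$ supplies exponential decay $e^{-2\pi C n|\gamma|}$ that dominates any such polynomial, uniformly on the compact set; summing first over $n \geq 1$ and then over $\gamma$ using the convergence property closes the argument. One should also record that $Z_\gamma$ is nowhere zero on $\mathrm{Supp}(\Omega)$ over $K$ (again from the support property), so that $|Z_\gamma|$ is smooth where it appears and the only singularity of $K_0$ at the origin is harmless. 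I would relegate the explicit bookkeeping of these estimates to a short computation, as the structure is standard once the decay is identified.
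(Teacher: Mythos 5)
Your proof is correct and takes essentially the same route as the paper's: the support and convergence properties of Definition \ref{varBPSdef} together with the exponential decay of $K_0$ give normal convergence and smoothness, monodromy invariance of the $\Omega(\gamma)$ and summing over all of $\Gamma$ give globality, and the parity $\Omega(\gamma)=\Omega(-\gamma)$ (pairing $\gamma$ with $-\gamma$ to produce real cosine terms against the $1/(2\pi\mathrm{i})$ prefactor) gives imaginary-valuedness. The only difference is one of detail: the paper delegates the convergence estimates to \cite[Lemma 3.9]{CT}, whereas you spell out the term-by-term differentiation argument explicitly.
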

\begin{proof}
    By the same arguments given in \cite[Lemma 3.9]{CT}, the support property ensures that the summands are well defined (i.e. $|Z_{\gamma}|\neq 0$), while the normal convergence of the sum follows from the exponential decay of the Bessel functions $K_{\alpha}(x)$ as $x\to \infty$, together with the convergence property of the BPS structure and the support property (which guarantees that $|Z_{\gamma}|\to \infty$ as $||\gamma||\to \infty$). The normal convergence then implies that $J$ is smooth. Furthermore, the monodromy invariance of the $\Omega(\gamma)$'s and the fact that we sum over all $\gamma$ implies that the above expressions is actually a global function on $N$. Finally, the fact that $J$ is imaginary follows from the parity property $\Omega(\gamma)=\Omega(-\gamma)$.
\end{proof}
We can now use $J$ to define $\mathcal{A}^{\zeta}$ via \eqref{famconn}. The exponential decay of the terms involving $J$ as $|Z_{\gamma}|\to \infty$ (which follows from the same argument in \cite[Section 3.2]{CT}) and the fact that \eqref{semiflatehresmann} is a special Joyce structure implies that the non-degeneracy conditions \eqref{non-deg} are satisfied at least on $\pi^{-1}(U)\subset N$ for some open subset $U\subset M$. By restricting $M$ if necessary, we assume that $\eqref{non-deg}$ holds on all of $N=TM$.

\begin{proposition}
The family of complexified Ehresmann connections $\mathcal{A}^{\zeta}$ on $\pi:N \to M$ specified by the function $J$ via \eqref{famconn} defines a special Joyce structure.
\end{proposition}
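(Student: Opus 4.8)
The plan is to verify directly that the function $J$ of \eqref{Juncoupled} satisfies the two conditions of Definition \ref{specialJoycedef}: flatness of $\mathcal{A}^{\zeta}$ for every $\zeta\in\mathbb{C}^{\times}$, equivalently the conditions of Proposition \ref{flatspecial}; and that $\omega^{\nu}$ is of type $(1,1)$ with respect to $I_3$, equivalently $\{\nu_{\overline{X}}J,\nu_{\overline{Y}}J\}=0$ by Lemma \ref{11obstruction}. The non-degeneracy \eqref{non-deg} and the global smoothness of $J$ have already been arranged, the latter by Proposition \ref{Jim}, which moreover tells us that $J$ is imaginary-valued, so $J-\overline{J}=2J$ throughout.

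The key structural point is that, the variation of BPS structures being uncoupled and the Darboux frame $(\gamma_i,\gamma^i)$ having been chosen with $\text{Supp}(\Omega)\subset\text{span}_{\mathbb{Z}}\{\gamma^i\}$ (following \cite[Lemma 3.14]{CT}), each summand of \eqref{Juncoupled} depends on the fibre coordinates only through $\varphi_{\gamma}=n_i(\gamma)\varphi^i$, never through the $\varphi_i$. Hence, in the coordinates $(Z^i,\overline{Z}^i,\varphi^i,\varphi_i)$ on $N$, the function $J$ is independent of the $\varphi_i$; and since $\mathcal{H}_{\partial/\partial Z^i}=\partial_{Z^i}$ while $\nu_{\partial/\partial Z^i}J=\frac12\partial_{\varphi^i}J$ (the $\tau_{ij}\partial_{\varphi_j}$-term acting trivially), and similarly for the conjugates, the property of being $\varphi_i$-independent is preserved under any composition of $\mathcal{H}_{X},\mathcal{H}_{\overline{X}},\nu_{X},\nu_{\overline{X}}$ applied to $J$. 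Since $\{f,g\}=\partial_{\varphi^k}f\,\partial_{\varphi_k}g-\partial_{\varphi_k}f\,\partial_{\varphi^k}g$, \emph{every} Poisson bracket occurring in Proposition \ref{flatspecial} --- all of those in \eqref{Plebanski-add}, and the right-hand sides of \eqref{Plebanski-like} --- is a bracket of two $\varphi_i$-independent functions, hence vanishes identically. Thus \eqref{Plebanski-add} holds trivially, and $\{\nu_{\overline{X}}J,\nu_{\overline{Y}}J\}=0$, which by Lemma \ref{11obstruction} supplies the second condition of Definition \ref{specialJoycedef}. It remains to treat \eqref{Plebanski-like}, which (taking $X=\partial_{Z^i}$, $Y=\partial_{Z^j}$ and using $J-\overline{J}=2J$) collapses to the linear system
\[
\partial_{\varphi^i}\partial_{Z^j}J=\partial_{\varphi^j}\partial_{Z^i}J,\qquad
\partial_{Z^i}\partial_{\overline{Z}^j}J+\pi^2\,\partial_{\varphi^i}\partial_{\varphi^j}J=0 ,
\]
which is the specialization \eqref{Plebanski-likelinear} of \eqref{Plebanski-like} with vanishing right-hand sides; I claim these in fact hold exactly, not merely up to functions on the base.

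I would verify both identities term by term in the sum over $\gamma$ and $n>0$, using $Z_{\gamma}=n_i(\gamma)Z^i$ and $\varphi_{\gamma}=n_i(\gamma)\varphi^i$. For the first, $\partial_{\varphi^i}$ contributes a factor $n_i(\gamma)$ and $\partial_{Z^j}$ a factor $n_j(\gamma)$, with all remaining factors depending on $\gamma$ only through $\varphi_{\gamma}$, $|Z_{\gamma}|$ and $\overline{Z_{\gamma}}$; symmetry under $i\leftrightarrow j$ is then manifest. For the second I would use $\partial_{\varphi^i}\partial_{\varphi^j}e^{\mathrm{i}n\varphi_{\gamma}}=-n^2 n_i(\gamma)n_j(\gamma)e^{\mathrm{i}n\varphi_{\gamma}}$ together with the modified Bessel equation $K_0''(x)+x^{-1}K_0'(x)-K_0(x)=0$: viewing $K_0(2\pi n|Z_{\gamma}|)$ as a function of $w=Z_{\gamma}\in\mathbb{C}$, the fact that $4\partial_w\partial_{\overline{w}}$ is the flat Laplacian whose radial part is the Bessel operator gives $\partial_w\partial_{\overline{w}}K_0(2\pi n|w|)=\pi^2 n^2 K_0(2\pi n|w|)$; the chain-rule factor $n_i(\gamma)n_j(\gamma)$ and the $1/n^2$ in \eqref{Juncoupled} then make the two contributions cancel exactly. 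With both conditions of Definition \ref{specialJoycedef} established, $\mathcal{A}^{\zeta}$ is a special Joyce structure.

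The argument is almost entirely bookkeeping; the single substantive input is the modified Bessel equation, which is precisely what forces the ``heavenly'' half of \eqref{Plebanski-like} to hold --- so the only (mild) obstacle is arranging the chain-rule derivatives of the $K_0$-terms so that the cancellation is transparent. One small point worth stating explicitly is that $\partial_{Z^i}$ may be treated as a naive partial derivative on $J$: this is legitimate because, in the coordinates $(Z^i,\overline{Z}^i,\varphi^i,\varphi_i)$, the expression \eqref{Juncoupled} involves neither $\tau_{ij}$ nor $\varphi_i$.
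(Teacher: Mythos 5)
Your proposal is correct and follows essentially the same route as the paper: observe that $J$ depends on the fibre coordinates only through the $\varphi^i$ so that every Poisson bracket in Proposition \ref{flatspecial} vanishes and Lemma \ref{11obstruction} gives the type $(1,1)$ condition, then reduce \eqref{Plebanski-like} to the linear system \eqref{Plebanski-likelinear} and verify it termwise using Bessel-function identities. Your packaging of the second identity via the flat Laplacian $4\partial_w\partial_{\overline{w}}$ and the modified Bessel equation is equivalent to the paper's use of $K_0'=-K_1$ and $(xK_1)'=-xK_0$, and the cancellation $\partial_w\partial_{\overline{w}}K_0(2\pi n|w|)=\pi^2n^2K_0(2\pi n|w|)$ against $\partial_{\varphi^i}\partial_{\varphi^j}e^{\mathrm{i}n\varphi_{\gamma}}=-n^2n_i(\gamma)n_j(\gamma)e^{\mathrm{i}n\varphi_{\gamma}}$ checks out exactly.
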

\begin{proof}
    Consider as before a local Darboux frame $(\gamma_i,\gamma^i)$ of $\Gamma$  such that $\text{Supp}(\Omega)\subset \text{span}_{\mathbb{Z}}\{\gamma^i\}_{i=1,...,n}$. Furthermore, consider the corresponding induced conjugate system of holomorphic special coordinates $(Z^i)$ and $(Z_i)$ as in Proposition \ref{integralASK}, and the induced coordinate system $(x^i,y_i,\varphi^i,\varphi_i)$ on $N$. With respect to the coordinates $(Z^i,\varphi^i,\varphi_i)$ on $N$, we then find that the function $J$ only depends on $(Z^i,\varphi^i)$. This implies that all the expressions involving Poisson brackets in Proposition \ref{flatspecial} vanish, while the second condition of Definition \ref{specialJoycedef} is satisfied due to Lemma \ref{11obstruction}. Hence, using that $J=-\overline{J}$, the conditions from Proposition \ref{flatspecial} that remain to be checked is that for local holomorphic sections $X,Y$ of $T^{1,0}M\to M$
    \begin{equation}\label{Plebanski-simp}
\begin{split}
    \nu_X(\mathcal{H}_{Y}J)-\nu_Y(\mathcal{H}_{X}J)&=0\\
    \mathcal{H}_X(\mathcal{H}_{\overline{Y}}J)+4\pi^2 \cdot \nu_X(\nu_{\overline{Y}}J)&=0\,,
\end{split}
\end{equation}
up to the addition of functions that descend to $M$. In fact, we will see that these equations are satisfied exactly. To check this, we note that now we have
    \begin{equation}
        \mathcal{H}_{\frac{\partial}{\partial Z^i}}J=\frac{\partial J}{\partial Z^i}, \quad \nu_{\frac{\partial}{\partial Z^i}}J=\frac{1}{2}\frac{\partial J}{\partial \varphi^i}\,,
    \end{equation}
    where the latter is due to the fact that $J$ is independent of the $\varphi_i$. The equations \eqref{Plebanski-simp} then simplify to checking that 
    \begin{equation}\label{Plebanski-likelinear}
        \frac{\partial^2 J}{\partial Z^i \partial \varphi^j}=\frac{\partial^2 J}{\partial Z^j \partial \varphi^i}, \quad \frac{\partial^2 J}{\partial Z^i \partial \overline{Z}^j}+\pi^2\frac{\partial^2 J}{\partial \varphi^i \partial \varphi^j}=0\,.
    \end{equation}
     The first one follow easily, while the second follows from the following identities involving derivatives of the Bessel functions
    \begin{equation}
K'_{0}(x)=-K_1(x), \quad (xK_1(x))'=-xK_0(x)\,.
    \end{equation}
    We then conclude that the corresponding family $\mathcal{A}^{\zeta}$ defines a special Joyce structure.
\end{proof}

We now discuss the induced HK structure. The maps $h$ and $v$ from \eqref{hvdef} in this case reduce to
\begin{equation}
    h_{i}:=h_{\frac{\partial}{\partial Z^i}}=\frac{\partial}{\partial Z^i}+\frac{\partial^2J}{\partial Z^i \partial \varphi^j}\frac{\partial}{\partial \varphi_j}, \quad v_i:=v_{\frac{\partial}{\partial Z^i}}=\pi \mathrm{i}\left(\frac{\partial}{\partial \varphi^i}-\overline{\tau}_{ij}\frac{\partial}{\partial \varphi_j} + \frac{\partial^2 J}{\partial \varphi^i \partial \varphi^j}\frac{\partial}{\partial \varphi_j}\right)\,.
\end{equation}
On the other hand, from Lemma \ref{keyexp} and the fact that $J$ is imaginary-valued we have
\begin{equation}
    \omega_3(v_i,\overline{v_j})=\omega\left(\frac{\partial}{\partial Z^j},\frac{\partial}{\partial \overline{Z}^i}\right)+2\nu_{\frac{\partial}{\partial \overline{Z}^i}}\nu_{\frac{\partial}{\partial Z^j}}(J)=\frac{\mathrm{i}}{2}\text{Im}(\tau_{ij})+\frac{1}{2}\frac{\partial^2 J}{\partial \varphi^i \partial \varphi^j}\,.
\end{equation}
If we introduce the notation from \cite[Equation 3.9]{CT}
\begin{equation}
    \begin{split}
    V_{\gamma}&:=\frac{1}{2\pi}\sum_{n>0}e^{\mathrm{i}n\varphi_{\gamma}}K_0(2\pi n|Z_{\gamma}|)\\
    A_{\gamma}&:=-\frac{1}{4\pi}\sum_{n>0}e^{\mathrm{i}n\varphi_{\gamma}}|Z_{\gamma}|K_1(2\pi n|Z_{\gamma}|)\left(\frac{\mathrm{d}Z_{\gamma}}{Z_{\gamma}}-\frac{\mathrm{d}\overline{Z}_{\gamma}}{\overline{Z}_{\gamma}}\right)\,,
    \end{split}
\end{equation}
we then obtain
\begin{equation}\label{omega3uncopred}
\omega_3(v_i,\overline{v_j})=\omega_3(h_j,\overline{h_i})=\frac{\mathrm{i}}{2}\left(\text{Im}(\tau_{ij})+\sum_{\gamma}\Omega(\gamma)n_i(\gamma)n_j(\gamma)V_{\gamma}\right), \quad \omega_3(h_i,\overline{v_j})=\omega_3(v_j,\overline{h_i})=0\,.
\end{equation}
One can check by an explicit computation that we can write

\begin{equation}\label{omega3uncoupled}
    \omega_3=\frac{\mathrm{i}}{2}\text{Im}(\tau_{ij})\mathrm{d}Z^i\wedge \mathrm{d}\overline{Z}^j +\frac{1}{4\pi^2}\mathrm{d}\varphi_i\wedge \mathrm{d}\varphi^i+\sum_{\gamma}\Omega(\gamma)\left(\frac{\mathrm{i}}{2}V_{\gamma}\mathrm{d}Z_{\gamma}\wedge \mathrm{d}\overline{Z}_{\gamma}+\frac{1}{2\pi}\mathrm{d}\varphi_{\gamma}\wedge A_{\gamma}\right)\,\,,
\end{equation}
by evaluating the right hand side of \eqref{omega3uncoupled} on the local frame given by $h_i$, $\overline{h_i}$, $v_i$, $\overline{v_i}$ and checking that it matches with \eqref{omega3uncopred}. Similarly, using \eqref{holomega_3} one checks that the following expression for $\Omega$ holds
\begin{equation}
    \Omega=-\frac{1}{2\pi}\mathrm{d}Z^i\wedge (\mathrm{d}\varphi_i+\tau_{ij}\mathrm{d}\varphi^j)+\sum_{\gamma}\Omega(\gamma)\left(\mathrm{d}Z_{\gamma}\wedge A_{\gamma} +\frac{\mathrm{i}}{2\pi}V_{\gamma}\mathrm{d}\varphi_{\gamma}\wedge\mathrm{d}Z_{\gamma}\right)\,.
\end{equation}

As in the semi-flat case, using the identification $\omega:TM \cong T^*M$ given by $X\to \omega(X,-)$, we can induce from the above HK structure  an HK structure on $T^*M$. After doing so, we obtain in the coordinates $(Z^i,\theta_i,\theta^i)$ the follows local expressions for $\omega_3$ and $\Omega $
\begin{equation}
\begin{split}
    \Omega&=\frac{1}{2\pi}\mathrm{d}Z^i\wedge (\mathrm{d}\theta_i -\tau_{ij}\mathrm{d}\theta^j)+\sum_{\gamma}\Omega(\gamma)\left(\mathrm{d}Z_{\gamma}\wedge A_{\gamma} +\frac{\mathrm{i}}{2\pi}V_{\gamma}\mathrm{d}\theta_{\gamma}\wedge\mathrm{d}Z_{\gamma}\right)\\
    \omega_3&=\frac{\mathrm{i}}{2}\text{Im}(\tau_{ij})\mathrm{d}Z^i\wedge \mathrm{d}\overline{Z}^j -\frac{1}{4\pi^2}\mathrm{d}\theta_i\wedge \mathrm{d}\theta^i+\sum_{\gamma}\Omega(\gamma)\left(\frac{\mathrm{i}}{2}V_{\gamma}\mathrm{d}Z_{\gamma}\wedge \mathrm{d}\overline{Z}_{\gamma}+\frac{1}{2\pi}\mathrm{d}\theta_{\gamma}\wedge A_{\gamma}\right)\,,\\
\end{split}
\end{equation}
matching with the expressions from \cite[Equation (3.10) and (3.11)]{CT}. Hence this special Joyce structure recovers the HK metrics associated to uncoupled variations of BPS structures studied in \cite{CT}.

\section{Relation to hyperk\"{a}hler metrics on algebraic integrable systems}\label{intsysrel}

In this section we consider an ASK manifold $M$ whose ASK structure is given by a special period structure $(M,\Gamma,Z)$ (recall Definition \ref{SPSdef}), together with a special Joyce structure $\mathcal{A}^{\zeta}$ over $M$. We further assume a certain simple compatibility condition between $\mathcal{A}^{\zeta}$ and $(M,\Gamma,Z)$, introduced below. We then show that such special Joyce structures induce an HK metric with a compatible algebraic integrable system structure. 

\begin{definition}\label{percompdef}
Consider a special period structure $(M,\Gamma,Z)$ and a special Joyce structure $\mathcal{A}^{\zeta}$ over the induced ASK manifold $(M,I,\omega,\nabla)$. We say that $\mathcal{A}^{\zeta}$ is compatible with the period structure $(M,\Gamma,Z)$ if the induced hyperk\"{a}her structure on $N=TM$ is invariant under the action by fiberwise-translations by $2\pi\cdot \Gamma \subset TM$.\footnote{This type of condition is already included in the definition of Joyce structure in \cite{BridgeJoyce}.}
\end{definition}

In the above situation, we get an induced HK structure on the quotient $X:=TM/2\pi \cdot \Gamma$.  Note that if $\text{dim}_{\mathbb{C}}(M)=n$, then the fibers $X_p$ of the canonical projection $\pi:X\to M$ satisfy  
\begin{equation}
    X_p=T_pM/2\pi \cdot \Gamma_p\cong (S^1)^{2n}\,.
\end{equation}

If $(M,\Gamma,Z)$ is a special period structure, then special Joyce structures from our examples in Section \ref{examplesec} are compatible with $(M,\Gamma,Z)$. Indeed, note that if $(\gamma_i,\gamma^i)$ is a local Darboux frame of $\Gamma$, then by Proposition \ref{integralASK} we have that the induced affine special coordinates $(x^i,y_i)$ satisfy
\begin{equation}
    \gamma_i=\partial_{x^i}, \quad \gamma^i=\partial_{y_i}\,\,.
\end{equation}
Hence, with respect to the induced coordinates $(x^i,y_i,\varphi^i,\varphi_i)$ on $TM$, translations by $2\pi \cdot \Gamma$ amount to the shifts 
\begin{equation}
    \varphi^i\to \varphi^i+2\pi n^i, \quad \varphi_i\to \varphi_i+2\pi n_i, \quad n^i,n_i\in \mathbb{Z}\,.
\end{equation}
It is the easy to check from the expressions of the corresponding HK structures on Section \ref{examplesec} that the HK structures are invariant by translations by $2\pi \cdot \Gamma$.\\

We now recall the definition of an algebraic integrable system, taken from \cite[Section 3]{SK}.

\begin{definition}
An algebraic integrable system is a tuple $(\pi:X \to M, \Omega,[\rho])$ such that:
\begin{itemize}
\item $\pi: X \to M$ is a holomorphic submersion.
    \item  $\Omega\in \Omega^{2,0}(X)$ is a holomorphic symplectic form on $X$.
    \item The fibers $X_p:=\pi^{-1}(p)$ are compact and Lagrangian, and hence tori. 
    \item $[\rho]$ gives a family of smoothly varying classes $[\rho_p]\in H^{1,1}(X_p)\cap H^{2}(X_p,\mathbb{Z})$ defining a possibly indefinite polarization of $X_p$.
\end{itemize}
\end{definition}
\begin{definition}\label{compintHK}
    Consider a hyperk\"{a}hler manifold $(X,g,I_1,I_2,I_3)$ (possibly with indefinite signature) with associated K\"ahler forms $\omega_i$, $i=1,2,3$; together with an algebraic integrable system structure $(\pi:X\to M, \Omega,[\rho])$. We will say that the two structures are compatible if:
    \begin{itemize}
        \item $\pi:X\to M$ is holomorphic with respect to the complex structure $I_3$ on $X$.
        \item The holomorphic symplectic form $\omega_1+\mathrm{i}\omega_2$ with respect to $I_3$ equals $\Omega$.
        \item The polarizations of the fibers of $\pi:X\to M$ are specified by $\omega_3$ in the sense that 
        $[\omega_3|_{X_p}]=[\rho_p]$.
    \end{itemize}
\end{definition}

\begin{proposition}\label{HKintsysprop}
    Consider a special period structure $(M,\Gamma,Z)$ and a special Joyce structure $\mathcal{A}^{\zeta}$ over $M$ compatible with the period structure. Then the associated HK structure $(X:=TM/2\pi\cdot \Gamma,g,I_1,I_2,I_3)$ has a compatible algebraic integrable system structure where $\pi:X \to M$ is the canonical projection. 
\end{proposition}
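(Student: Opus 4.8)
The plan is to produce the algebraic integrable system structure by hand: set $\Omega := \omega_1 + \mathrm{i}\omega_2$, the holomorphic symplectic form of the hyperk\"{a}hler structure relative to $I_3$, and $[\rho_p] := [\omega_3|_{X_p}]$ for $p \in M$. By the compatibility hypothesis (Definition \ref{percompdef}) the hyperk\"{a}hler structure on $N = TM$ is invariant under the free and proper action by fiberwise translations by $2\pi\cdot\Gamma$, hence descends to $X = N/(2\pi\cdot\Gamma)$, and $\pi : X \to M$ is a submersion with fibers $X_p = T_pM/(2\pi\cdot\Gamma_p) \cong (S^1)^{2n}$, $n = \dim_{\mathbb{C}}M$. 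With these choices the last two conditions of Definition \ref{compintHK} hold tautologically, so it remains to check: (i) $\pi$ is holomorphic for $I_3$; (ii) $\Omega$ is a holomorphic symplectic form; (iii) the fibers are compact and Lagrangian, hence complex tori; (iv) the $[\rho_p]$ form a smoothly varying family of (possibly indefinite) polarizations.

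For (i), \eqref{non-deg} gives $\mathrm{d}\pi(h_X)=X$ and $\mathrm{d}\pi(v_X)=0$ while \eqref{cstr} gives $I_3 h_X = \mathrm{i}h_X$, $I_3 v_X = \mathrm{i}v_X$ for $X\in\pi^*(T^{1,0}M)$; since the $h_X,\overline{h_X},v_X,\overline{v_X}$ frame $TN\otimes\mathbb{C}$ and $I_3$, $I$ are real, one concludes $\mathrm{d}\pi\circ I_3 = I\circ\mathrm{d}\pi$, which descends to $X$. For (ii), $\Omega$ is of type $(2,0)$ for $I_3$ (noted below Proposition \ref{hyperhermitianstr}) and closed by Theorem \ref{maintheorem}; a closed $(2,0)$-form is $\bar\partial$-closed, hence a holomorphic $2$-form, and $\omega_1+\mathrm{i}\omega_2$ is non-degenerate because $g$ is. All of this passes to $X$.

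For (iii): the $X_p$ are compact with tangent spaces $V_\pi$, whose complexification is spanned by $v_X,\overline{v_X}$; by \eqref{holomega_3} $\Omega(v_X,v_Y)=0$, and $\Omega(v_X,\overline{v_Y})=\Omega(\overline{v_X},\overline{v_Y})=0$ since $\Omega$ is of type $(2,0)$ while, by \eqref{cstr}, the $v_X$ are $(1,0)$ and the $\overline{v_X}$ are $(0,1)$ for $I_3$; hence $\Omega|_{TX_p}=0$, and as $\dim_{\mathbb{C}}X_p = n = \tfrac12\dim_{\mathbb{C}}X$ the fibers are Lagrangian. That a compact Lagrangian fiber of a holomorphic symplectic submersion is a complex torus is the holomorphic Arnol'd--Liouville statement: taking holomorphic coordinates $w^i$ on an open set of $M$, the $\Omega$-Hamiltonian vector fields of the $\pi^*w^i$ are holomorphic, tangent to the fibers (their differentials annihilate $V_\pi$, and $\Omega$ identifies $V_\pi$ with its annihilator, the fibers being Lagrangian), pairwise commuting (their Poisson brackets vanish, the fibers being isotropic), pointwise independent, and, being tangent to the compact fibers, complete on them; they integrate to a free transitive holomorphic $\mathbb{C}^n$-action, so $X_p\cong\mathbb{C}^n/\Lambda_p$ is a complex torus, consistently with $X_p\cong(S^1)^{2n}$.

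For (iv): restricting \eqref{omega3def} to $V_\pi$ and using that $\omega^\nu$ is of type $(1,1)$ for $I_3|_{V_\pi}$ (the second condition of Definition \ref{specialJoycedef}), both $\omega_3$ and $-\tfrac{1}{4\pi^2}\omega^\nu$ vanish on pairs of $(1,0)$- and on pairs of $(0,1)$-vertical vectors and agree on the mixed pairs, so $\omega_3|_{V_\pi} = -\tfrac{1}{4\pi^2}\omega^\nu$ --- note this uses only \eqref{omega3def}, not the $J$-dependent formula \eqref{omega3explicit}. In affine special coordinates $(x^i,y_i)$ induced by a local Darboux frame $(\gamma_i,\gamma^i)$ of $\Gamma$ with $\gamma_i=\partial_{x^i}$, $\gamma^i=\partial_{y_i}$ (Proposition \ref{integralASK}), where $\omega^\nu=\mathrm{d}\varphi^i\wedge\mathrm{d}\varphi_i$ by \eqref{omegavertdarboux}, this gives $\omega_3|_{X_p} = -\tfrac{1}{4\pi^2}\mathrm{d}\varphi^i\wedge\mathrm{d}\varphi_i$, a closed, translation-invariant, non-degenerate form on the torus $X_p$ with $2\pi\mathbb{Z}$-periodic $\varphi$'s; under $H_1(X_p,\mathbb{Z})\cong\Gamma_p$ it represents $-[\langle-,-\rangle_p]\in\wedge^2\Gamma_p^*\subset H^2(X_p,\mathbb{Z})$, hence an integral class, of type $(1,1)$ on $X_p$ as the restriction of the pseudo-K\"{a}hler form $\omega_3$ of $(X,I_3)$, varying smoothly in $p$ since $\langle-,-\rangle$ is a $\nabla$-parallel section of $\wedge^2\Gamma^*$, and non-degenerate (with signature matching that of the hyperk\"{a}hler metric) since $\langle-,-\rangle$ is (Definition \ref{SPSdef}). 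Combining (i)--(iv) shows $(\pi:X\to M,\Omega,[\rho])$ is an algebraic integrable system and, by the above choices, the three conditions of Definition \ref{compintHK} are met. I expect step (iii), realizing the fibers as complex tori, to be the main point of substance --- though it is classical --- with the secondary subtlety being the normalization in (iv), resolved by the identity $\omega_3|_{V_\pi}=-\tfrac{1}{4\pi^2}\omega^\nu$.
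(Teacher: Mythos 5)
Your proposal is correct and follows essentially the same route as the paper: holomorphicity of $\pi$ from \eqref{cstr}, the Lagrangian condition from \eqref{holomega_3} and the $(2,0)$-type of $\Omega$, and the polarization from the identity $\omega_3|_{V_\pi}=-\tfrac{1}{4\pi^2}\omega^{\nu}$ expressed in the affine special coordinates adapted to a Darboux frame of $\Gamma$. The only difference is that you spell out two points the paper leaves implicit (that the compact Lagrangian fibers are complex tori via a holomorphic Arnol'd--Liouville argument, and that $\Omega$ is genuinely a holomorphic symplectic form), which is harmless added detail rather than a different argument.
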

\begin{proof}
    To show that $\pi$ is holomorphic with respect to $I_3$ and the complex structure $I$ on $M$, it is enough to show that 
    \begin{equation}
        \mathrm{d}\pi \circ I_3=I\circ \mathrm{d}\pi\,.
    \end{equation}
    The latter follows from \eqref{cstr}. On the other hand, the fact that the fibers are Lagrangian with respect to $\pi$ follows from \eqref{holomega_3}. Finally, note that from \eqref{omega3def}, we have that 
    \begin{equation}
        \omega_3|_{T_pM}=-\frac{1}{4\pi^2}\omega^{\nu}|_{T_pM}\,.
    \end{equation}
    In particular, with respect to affine special coordinates $(x^i,y_i)$ around $p$ and the induced coordinates $(x^i,y_i,\varphi^i,\varphi_i)$ on $TM$, we have (recall \eqref{omegavertdarboux})
    \begin{equation}
        \omega_3|_{T_pM}=\frac{1}{4\pi^2}\mathrm{d}\varphi_i\wedge \mathrm{d}\varphi^i\,.
    \end{equation}
    From this it follows that $\omega_3|_{X_p}$ is a closed form defining an integral cohomology class on $X_p\cong (S^{1})^{2n}$. The fact that it is of type $(1,1)$ follows from the second condition of Definition \ref{specialJoycedef}. Hence the cohomology class $[\omega_3|_{X_p}]$ defines a (possibly indefinite) polarization on $X_p$.  
\end{proof}

\begin{remark}\label{polrem}
    Note that the above HK geometries obtained from special Joyce structures satisfy a slightly stronger compatiblity condition than the one from Definition \ref{compintHK}. Namely, $\omega_3$ restricted to the fibers gives the unique invariant closed $(1,1)$ form specifying the polarizations, rather than just specifying the polarization via its cohomology class.    
\end{remark}

Note that by \cite[Theorem 3.4]{SK}, given an algebraic integrable system $(\pi:X\to M, \Omega,[\rho])$, there is an ASK structure on the base $M$ determined by the integrable system structure. Roughly speaking, $\Omega$ is used to relate $X$ to $T^*M/\Lambda$, where $\Lambda\subset T^*M$ is a bundle of full rank lattices. One then uses the bundle of lattices $\Lambda\to M$ to induce the flat connection on $M$ of the ASK structure, while the K\"{a}hler form $\omega$ is determined by the polarizations $[\rho_p]$. Suppose now that we start with an HK structure on $X=TM/(2\pi \cdot \Gamma)$ having a compatible algebraic integrable system structure $(\pi: X \to M, \Omega,[\rho])$. We further assume that the HK structure lifts to $TM$. By the previous argument, one automatically obtains an ASK structure on $M$. On the other hand, one can consider the $\mathbb{C}P^1$-family of complex structures $I_{\zeta}$ determined by the HK structure via \eqref{twistorhol} and the corresponding involutive distributions $T^{0,1}_{I_{\zeta}}(TM)\subset T(TM)\otimes \mathbb{C}$. The question is then whether the distributions $T^{0,1}_{I_{\zeta}}(TM)$ for $\zeta \in \mathbb{C}^{\times}$ allows us to define complexified Ehresmann connections $\mathcal{A}^{\zeta}$ with the form \eqref{famconn}. Whether this ``reverse" point of view on special Joyce structures holds will be deferred for future work.

\clearpage
  
\begin{appendix}
    \section{Proof of Lemma \ref{closedsimp}}\label{lemmaappendix}
        In general we have
\begin{equation}
    \mathrm{d}\omega_i(X,Y,Z)=X\omega_i(Y,Z)-Y\omega_i(X,Z)+Z\omega_i(X,Y) -\omega_i([X,Y],Z)+\omega_i([X,Z],Y)-\omega_i([Y,Z],X)\,.
\end{equation}
For $\omega_3$ in particular, since $\omega_3$ is of type $(1,1)$ in complex structure $I_3$, we know that $\mathrm{d}\omega_3$ can only have a $(2,1)$ and $(1,2)$ component. Using \eqref{omega3def} and Lemma \ref{flatlemma}, the relevant equations for the $(2,1)$ component are 
\begin{equation}
\begin{split}    \mathrm{d}\omega_3(h_i,h_j,\overline{h_k})&=h_i\omega_3(h_j,\overline{h_k})-h_j\omega_3(h_i,\overline{h_k})+\omega_3([h_i,\overline{h_k}],h_j)-\omega_3([h_j,\overline{h_k}],h_i)\\
\mathrm{d}\omega_3(v_i,h_j,\overline{h_k})&=v_i\omega_3(h_j,\overline{h_k})+\omega_3([v_i,\overline{h_k}],h_j)-\omega_3([h_j,\overline{h_k}],v_i)\\
\mathrm{d}\omega_3(v_i,v_j,\overline{h_k})&=\omega_3([v_i,\overline{h_k}],v_j)-\omega_3([v_j,\overline{h_k}],v_i)\\
\mathrm{d}\omega_3(h_i,h_j,\overline{v_k})&=\omega_3([h_i,\overline{v_k}],h_j)-\omega_3([h_j,\overline{v_k}],h_i)\\
\mathrm{d}\omega_3(v_i,h_j,\overline{v_k})&=-h_j\omega_3(v_i,\overline{v_k})+\omega_3([v_i,\overline{v_k}],h_j)-\omega_3([h_j,\overline{v_k}],v_i)\\
\mathrm{d}\omega_3(v_i,v_j,\overline{v_k})&=v_i\omega_3(v_j,\overline{v_k})-v_j\omega_3(v_i,\overline{v_k})+\omega_3([v_i,\overline{v_k}],v_j)-\omega_3([v_j,\overline{v_k}],v_i)
\end{split}
\end{equation}
for all $i,j,k=1,...,\text{dim}_{\mathbb{C}}(M)$.
The $(1,2)$ component is automatically obtained by conjugation and using the reality of $\omega_3$.\\

On the other hand, for $\Omega=\omega_1+\mathrm{i}\omega_2$, $\mathrm{d}\Omega$ has a $(3,0)$ and a $(2,1)$ component. Using \eqref{holomega_3} and Lemma \ref{flatlemma}, the equations relevant for the $(3,0)$ component are 
\begin{equation}
\begin{split}
\mathrm{d}\Omega(h_i,h_j,h_k)&=0\\
\mathrm{d}\Omega(v_i,h_j,h_k)&=-h_j\Omega(v_i,h_k)+h_k\Omega(v_i,h_j)\\
\mathrm{d}\Omega(v_i,v_j,h_k)&=v_i\Omega(v_j,h_k)-v_j\Omega(v_i,h_k)\\
\mathrm{d}\Omega(v_i,v_j,v_k)&=0
    \end{split}
\end{equation}
 while for the $(2,1)$ component we obtain
\begin{equation}
\begin{split}
\mathrm{d}\Omega(h_i,h_j,\overline{h_k})&=\Omega([h_i,\overline{h_k}],h_j)-\Omega([h_j,\overline{h_k}],h_i)\\
\mathrm{d}\Omega(v_i,h_j,\overline{h_k})&=\overline{h_k}\Omega(v_i,h_j)+\Omega([v_i,\overline{h_k}],h_j)-\Omega([h_j,\overline{h_k}],v_i)\\
\mathrm{d}\Omega(v_i,v_j,\overline{h_k})&=\Omega([v_i,\overline{h_k}],v_j)-\Omega([v_j,\overline{h_k}],v_i)\\
\mathrm{d}\Omega(h_i,h_j,\overline{v_k})&=\Omega([h_i,\overline{v_k}],h_j)-\Omega([h_j,\overline{v_k}],h_i)\\
\mathrm{d}\Omega(h_i,v_j,\overline{v_k})&=\overline{v_k}\Omega(h_i,v_j)+\Omega([h_i,\overline{v_k}],v_j)-\Omega([v_j,\overline{v_k}],h_i)\\
\mathrm{d}\Omega(v_i,v_j,\overline{v_k})&=\Omega([v_i,\overline{v_k}],v_j)-\Omega([v_j,\overline{v_k}],v_i)\,.\\
    \end{split}
\end{equation}
The above equations should hold again for all $i,j,k=1,...,\text{dim}_{\mathbb{C}}(M)$.\\

In order to simplify the above equations, we note that by an explicit computation using the definitions of $h$ and $v$ \eqref{hvdef}, one finds that  
\begin{equation}\label{vertbrackets}
    [h_i,\overline{h_j}], [v_i,\overline{h_j}], [h_i,\overline{v_j}],[v_i,\overline{v_j}]\in \text{span}\{v_i,\overline{v_j}\}_{i,j=1,...,\text{dim}_{\mathbb{C}}(M)}\,.
\end{equation}
 Using the above in conjunction with \eqref{omega3def}, the equations for $\mathrm{d}\omega_3$ simplify to
\begin{equation}
\begin{split}    \mathrm{d}\omega_3(h_i,h_j,\overline{h_k})&=h_i\omega_3(h_j,\overline{h_k})-h_j\omega_3(h_i,\overline{h_k})\\
\mathrm{d}\omega_3(v_i,h_j,\overline{h_k})&=v_i\omega_3(h_j,\overline{h_k})-\omega_3([h_j,\overline{h_k}],v_i)\\
\mathrm{d}\omega_3(v_i,v_j,\overline{h_k})&=\omega_3([v_i,\overline{h_k}],v_j)-\omega_3([v_j,\overline{h_k}],v_i)\\
\mathrm{d}\omega_3(h_i,h_j,\overline{v_k})&=0\\
\mathrm{d}\omega_3(v_i,h_j,\overline{v_k})&=-h_j\omega_3(v_i,\overline{v_k})-\omega_3([h_j,\overline{v_k}],v_i)\\
\mathrm{d}\omega_3(v_i,v_j,\overline{v_k})&=v_i\omega_3(v_j,\overline{v_k})-v_j\omega_3(v_i,\overline{v_k})+\omega_3([v_i,\overline{v_k}],v_j)-\omega_3([v_j,\overline{v_k}],v_i)
\end{split}
\end{equation}
while using \eqref{holomega_3} the ones for the $(2,1)$ component of $\mathrm{d}\Omega$ simplify to 

\begin{equation}
\begin{split}
\mathrm{d}\Omega(h_i,h_j,\overline{h_k})&=\Omega([h_i,\overline{h_k}],h_j)-\Omega([h_j,\overline{h_k}],h_i)\\
\mathrm{d}\Omega(v_i,h_j,\overline{h_k})&=\overline{h_k}\Omega(v_i,h_j)+\Omega([v_i,\overline{h_k}],h_j)\\
\mathrm{d}\Omega(v_i,v_j,\overline{h_k})&=0\\
\mathrm{d}\Omega(h_i,h_j,\overline{v_k})&=\Omega([h_i,\overline{v_k}],h_j)-\Omega([h_j,\overline{v_k}],h_i)\\
\mathrm{d}\Omega(h_i,v_j,\overline{v_k})&=\overline{v_k}\Omega(h_i,v_j)-\Omega([v_j,\overline{v_k}],h_i)\\
\mathrm{d}\Omega(v_i,v_j,\overline{v_k})&=0\\
    \end{split}
\end{equation}

Hence, overall we need to check if the following expressions are 0
\begin{equation}
\begin{split}    (1)\quad \mathrm{d}\omega_3(h_i,h_j,\overline{h_k})&=h_i\omega_3(h_j,\overline{h_k})-h_j\omega_3(h_i,\overline{h_k})\\
(2)\quad \mathrm{d}\omega_3(v_i,h_j,\overline{h_k})&=v_i\omega_3(h_j,\overline{h_k})-\omega_3([h_j,\overline{h_k}],v_i)\\
(3)\quad \mathrm{d}\omega_3(v_i,v_j,\overline{h_k})&=\omega_3([v_i,\overline{h_k}],v_j)-\omega_3([v_j,\overline{h_k}],v_i)\\
(4)\quad \mathrm{d}\omega_3(v_i,h_j,\overline{v_k})&=-h_j\omega_3(v_i,\overline{v_k})-\omega_3([h_j,\overline{v_k}],v_i)\\
(5)\quad \mathrm{d}\omega_3(v_i,v_j,\overline{v_k})&=v_i\omega_3(v_j,\overline{v_k})-v_j\omega_3(v_i,\overline{v_k})+\omega_3([v_i,\overline{v_k}],v_j)-\omega_3([v_j,\overline{v_k}],v_i)\\
(6)\quad\mathrm{d}\Omega(v_i,h_j,h_k)&=-h_j\Omega(v_i,h_k)+h_k\Omega(v_i,h_j)\\
(7)\quad \mathrm{d}\Omega(v_i,v_j,h_k)&=v_i\Omega(v_j,h_k)-v_j\Omega(v_i,h_k)\\
(8)\quad\mathrm{d}\Omega(h_i,h_j,\overline{h_k})&=\Omega([h_i,\overline{h_k}],h_j)-\Omega([h_j,\overline{h_k}],h_i)\\
(9)\quad\mathrm{d}\Omega(v_i,h_j,\overline{h_k})&=\overline{h_k}\Omega(v_i,h_j)+\Omega([v_i,\overline{h_k}],h_j)\\
(10)\quad\mathrm{d}\Omega(h_i,h_j,\overline{v_k})&=\Omega([h_i,\overline{v_k}],h_j)-\Omega([h_j,\overline{v_k}],h_i)\\
(11)\quad\mathrm{d}\Omega(h_i,v_j,\overline{v_k})&=\overline{v_k}\Omega(h_i,v_j)-\Omega([v_j,\overline{v_k}],h_i)\\
\end{split}
\end{equation}
As a next step to further simplify the above equations, we use \eqref{omega3def}, \eqref{holomega_3}, \eqref{vertbrackets}, and $[h_i,\overline{h_j}]=[\overline{v_i},v_j]$ from the flatness conditions in Lemma \ref{flatlemma}, obtaining (the equations with primes are the ones that were rewritten)
\begin{equation}
\begin{split} 
(1')\quad\mathrm{d}\omega_3(h_i,h_j,\overline{h_k})&=h_i\omega_3(v_k,\overline{v_j})-h_j\omega_3(v_k,\overline{v_i})\\
(2')\quad \mathrm{d}\omega_3(v_i,h_j,\overline{h_k})&=v_i\omega_3(v_k,\overline{v_j})-\omega_3([\overline{v_j},v_k],v_i)\\
(3)\quad \mathrm{d}\omega_3(v_i,v_j,\overline{h_k})&=\omega_3([v_i,\overline{h_k}],v_j)-\omega_3([v_j,\overline{h_k}],v_i)\\
(4)\quad \mathrm{d}\omega_3(v_i,h_j,\overline{v_k})&=-h_j\omega_3(v_i,\overline{v_k})-\omega_3([h_j,\overline{v_k}],v_i)\\
(5)\quad \mathrm{d}\omega_3(v_i,v_j,\overline{v_k})&=v_i\omega_3(v_j,\overline{v_k})-v_j\omega_3(v_i,\overline{v_k})+\omega_3([v_i,\overline{v_k}],v_j)-\omega_3([v_j,\overline{v_k}],v_i)\\
(6')\quad\mathrm{d}\Omega(v_i,h_j,h_k)&=-2\mathrm{i}(h_j\omega_3(v_i,\overline{v_k})-h_k\omega_3(v_i,\overline{v_j}))\\
(7')\quad \mathrm{d}\Omega(v_i,v_j,h_k)&=2\mathrm{i}(v_i\omega_3(v_j,\overline{v_k})-v_j\omega_3(v_i,\overline{v_k}))\\
(8')\quad\mathrm{d}\Omega(h_i,h_j,\overline{h_k})&=-2\mathrm{i}(\omega_3(\overline{v_j},[\overline{v_i},v_k])-\omega_3(\overline{v_i},[\overline{v_j},v_k]))\\
(9')\quad\mathrm{d}\Omega(v_i,h_j,\overline{h_k})&=-2\mathrm{i}(\overline{h_k}\omega_3(\overline{v_j},v_i)+\omega_3(\overline{v_j},[v_i,\overline{h_k}])\\
(10')\quad\mathrm{d}\Omega(h_i,h_j,\overline{v_k})&=-2\mathrm{i}(\omega_3(\overline{v_j},[h_i,\overline{v_k}])-\omega_3(\overline{v_i},[h_j,\overline{v_k}]))\\
(11')\quad\mathrm{d}\Omega(h_i,v_j,\overline{v_k})&=2\mathrm{i}(\overline{v_k}\omega_3(\overline{v_i},v_j)+\omega_3(\overline{v_i},[v_j,\overline{v_k}]))\\
\end{split}
\end{equation}
From the above, together with the flatness condition $[v_i,\overline{h_j}]=[v_j,\overline{h_i}]$ from Lemma \ref{flatlemma} we see that setting the equations to $0$ for all $i,j,k=1,...,\text{dim}_{\mathbb{C}}(M)$, gives the following implications among them (a bar over a number means the conjugate equation)
\begin{equation}
\begin{split}
    &(1')\iff (6'), \quad (7')\text{\;\;and\;\;}\overline{(8')}\implies (5),\quad (4)\iff \overline{(9')}, \quad (2')+\overline{(8')}\implies \overline{(11')}\\
    &(3)\text{\;\;and\;\;}([v_i,\overline{h_j}]=[v_j,\overline{h_i}])\implies \overline{(10')}, \quad (2')\text{\;\;and\;\;}(7')\implies \overline{(8')}, \quad (4)\text{\;\;and\;\;}([v_i,\overline{h_j}]=[v_j,\overline{h_i}])\implies (1')\\
    &\\
\end{split}
\end{equation}
Hence, we can reduce $\mathrm{d}\omega_3=0$ and $\mathrm{d}\Omega=0$ to just checking that $(2')$, $(3)$, $(4)$ and $(7')$ are equal to $0$.  Namely, we must check that 
\begin{equation}
    \begin{split}
v_i\omega_3(v_j,\overline{v_k})&=v_j\omega_3(v_i,\overline{v_k})\\
h_j\omega_3(v_i,\overline{v_k})&=\omega_3(v_i,[h_j,\overline{v_k}])\\
\omega_3([v_i,\overline{h_k}],v_j)&=\omega_3([v_j,\overline{h_k}],v_i)\\
v_i\omega_3(v_j,\overline{v_k})&=\omega_3(v_i,[v_j,\overline{v_k}])\,.\\
    \end{split}
\end{equation}
The result of the lemma then follows.

\section{Relation to the instanton generating function}\label{instgenapp}

In the work of \cite{heavenlyJoyce} a certain integral formula is given for the Pleba\'nski potential $\widetilde{J}$\footnote{In their notation it is denoted $W$.} associated to a variation of BPS structures $(\mathcal{M},\Gamma,Z,\Omega)$ \cite[Equation (1.5)]{heavenlyJoyce}. This formula is in turn related to the so-called instanton generating function $\mathcal{G}$ studied by the same authors in \cite[Equation (3.22)]{BlackholeAP},  in the context of instanton corrections in Calabi-Yau compactifications of type IIA/B string theory. We further remark that, even though the BPS indices $\Omega(\gamma)$ jump across the walls of marginal stability, the functions $\widetilde{J}$ and $\mathcal{G}$ are smooth across the walls \cite[Appendix C]{BlackholeAP}. \\

In this appendix we show that the function $J$ from \eqref{Juncoupled}, specifying the special Joyce structure associated to an uncoupled variation of BPS structures, admits an integral formula similar to the one in \cite[Equation (1.5)]{heavenlyJoyce}\footnote{The author would like to thank S. Alexandrov and B. Pioline for bringing this fact to his attention.}. In this case a second summand like in \cite[Equation (1.5)]{heavenlyJoyce} is not present due to the BPS structure being uncoupled.

\begin{proposition}
     Consider an uncoupled variation of BPS structures $(M,\Gamma,Z,\Omega)$ and the corresponding function $J$ from \eqref{Juncoupled}. Then $J$ admits the representation
    \begin{equation}
        J=\frac{1}{4\pi \mathrm{i}}\sum_{\gamma}\Omega(\gamma)\int_{l_{\gamma}}\frac{\mathrm{d}\zeta}{\zeta}\mathrm{Li}_2(\mathcal{X}_{\gamma}(\zeta))\,
    \end{equation}
    where 
    \begin{equation}\label{xsm}
        \mathcal{X}_{\gamma}(\zeta)=\exp\left(\pi\frac{Z_{\gamma}}{\zeta}+\mathrm{i}\varphi_{\gamma}+\pi\zeta\overline{Z}_{\gamma}\right),\, \quad \; l_{\gamma}=\{\zeta\in \mathbb{C} \;\; | \;\; Z_{\gamma}/\zeta\in \mathbb{R}_{<0}\}\,,
    \end{equation}
    and $\mathrm{Li}_2(z)$ denotes the dilogarithm function. 
\end{proposition}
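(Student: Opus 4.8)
The plan is to reduce the claimed dilogarithm integral to the Bessel-function series \eqref{Juncoupled} by a direct contour computation, term by term in the sum over $\gamma$. First I would fix $\gamma$ with $\Omega(\gamma)\neq 0$ and work with the single summand
\begin{equation}
    I_\gamma:=\frac{1}{4\pi\mathrm{i}}\int_{l_\gamma}\frac{\mathrm{d}\zeta}{\zeta}\,\mathrm{Li}_2(\mathcal{X}_\gamma(\zeta))\,.
\end{equation}
Using the power series $\mathrm{Li}_2(z)=\sum_{n>0}z^n/n^2$, which converges on $l_\gamma$ because $|\mathcal{X}_\gamma(\zeta)|=\exp(\pi\,\mathrm{Re}(Z_\gamma/\zeta)+\pi\,\mathrm{Re}(\zeta\overline{Z}_\gamma))<1$ on the ray $l_\gamma$ (there $Z_\gamma/\zeta=-|Z_\gamma/\zeta|<0$ and $\zeta\overline{Z}_\gamma=\overline{Z_\gamma/\zeta}\cdot|\zeta|^2$ is also negative real), one gets
\begin{equation}
    I_\gamma=\frac{1}{4\pi\mathrm{i}}\sum_{n>0}\frac{e^{\mathrm{i}n\varphi_\gamma}}{n^2}\int_{l_\gamma}\frac{\mathrm{d}\zeta}{\zeta}\exp\left(\pi n\frac{Z_\gamma}{\zeta}+\pi n\zeta\overline{Z}_\gamma\right)\,.
\end{equation}
So the whole statement comes down to the single Bessel integral identity
\begin{equation}
    \int_{l_\gamma}\frac{\mathrm{d}\zeta}{\zeta}\exp\left(\pi n\frac{Z_\gamma}{\zeta}+\pi n\zeta\overline{Z}_\gamma\right)=2 K_0(2\pi n|Z_\gamma|)\,,
\end{equation}
after which summing over $n$ and $\gamma$ and comparing with \eqref{Juncoupled} finishes the proof (note $1/(4\pi\mathrm{i})\cdot 2=1/(2\pi\mathrm{i})\cdot 1$, matching the prefactor there).

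To establish the Bessel identity I would parametrize the ray $l_\gamma$. Writing $Z_\gamma=|Z_\gamma|e^{\mathrm{i}\alpha}$, the condition $Z_\gamma/\zeta\in\mathbb{R}_{<0}$ forces $\zeta=-t\,e^{\mathrm{i}\alpha}$ for $t>0$; then $Z_\gamma/\zeta=-|Z_\gamma|/t$, $\zeta\overline{Z}_\gamma=-t|Z_\gamma|$, and $\mathrm{d}\zeta/\zeta=\mathrm{d}t/t$, so the integral becomes
\begin{equation}
    \int_0^\infty\frac{\mathrm{d}t}{t}\exp\left(-\pi n|Z_\gamma|\left(\frac{1}{t}+t\right)\right)\,.
\end{equation}
This is a standard integral representation of the modified Bessel function: $\int_0^\infty t^{-1}e^{-x(t+1/t)/2}\,\mathrm{d}t=2K_0(x)$ with $x=2\pi n|Z_\gamma|$ (substitute $t=e^u$ to get $\int_{-\infty}^\infty e^{-x\cosh u}\,\mathrm{d}u=2K_0(x)$). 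The orientation of $l_\gamma$ must be chosen consistently — i.e. $t$ running from $0$ to $\infty$ — and I would state that this is the convention; with the opposite orientation one picks up an overall sign that would be absorbed into the definition of $l_\gamma$.

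The genuinely delicate points, rather than the main Bessel computation, are the analytic justifications: (i) convergence of the $\zeta$-integral at the endpoints $\zeta\to 0$ and $\zeta\to\infty$ along $l_\gamma$, which is immediate from the $\exp(-\pi n|Z_\gamma|(t+1/t))$ decay once one has the parametrization; (ii) interchanging the sum over $n$ (from $\mathrm{Li}_2$) with the $\zeta$-integral, which follows from uniform/normal convergence using the exponential bound on $|\mathcal{X}_\gamma(\zeta)|$ together with the support property of the BPS structure guaranteeing $|Z_\gamma|$ is bounded below on compacts; and (iii) interchanging the sum over $\gamma$, which is handled exactly as in Proposition \ref{Jim} via the convergence and support properties. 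I expect step (ii) — pinning down that the series of integrals converges normally on compact subsets of $N$, uniformly enough to justify the swap, and checking there is no subtlety when $\mathrm{Re}(Z_\gamma/\zeta)\to 0$ near the endpoints — to be the main obstacle, though it is more bookkeeping than conceptual difficulty. Everything else is the textbook Bessel integral plus matching constants and the choice of orientation for $l_\gamma$.
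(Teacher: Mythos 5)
Your proposal is correct and follows essentially the same route as the paper: the same parametrization of $l_\gamma$ (your $\zeta=-te^{\mathrm{i}\alpha}$ is the paper's $\zeta=-s\cdot Z_\gamma/|Z_\gamma|$), the same series expansion of $\mathrm{Li}_2$ justified by $|\mathcal{X}_\gamma|<1$ on the ray, the same Fubini-type interchange, and the same reduction to the integral representation $\int_0^\infty t^{-1}e^{-x(t+1/t)/2}\,\mathrm{d}t=2K_0(x)$ via $t=e^u$. The constants match the prefactor in \eqref{Juncoupled} exactly as you indicate.
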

\begin{proof}
    For $\gamma$ with $\Omega(\gamma)\neq 0$, we first parametrize $l_{\gamma}$ by $\zeta=-s\cdot Z_{\gamma}/|Z_{\gamma}|$ for $s>0$.  Recall that by the support property in Definition \ref{varBPSdef} we must have $Z_{\gamma}\neq 0$ whenever $\Omega(\gamma)\neq 0$, so this parametrization makes sense. Now note  that  
    \begin{equation}
        |\mathcal{X}_{\gamma}(-s\cdot Z_{\gamma}/|Z_{\gamma}|)|=|\exp (-s^{-1}\pi|Z_{\gamma}|+\mathrm{i}\varphi_{\gamma} - s\pi|Z_{\gamma}|)|<1\,,
    \end{equation}
    so we can use the series expansion of $\mathrm{Li}_2(z)$
    \begin{equation}
        \mathrm{Li}_2(z)=\sum_{n>0}\frac{z^n}{n^2}, \quad |z|<1\,,
    \end{equation}
    to write
    \begin{equation}
        \int_{l_{\gamma}}\frac{\mathrm{d}\zeta}{\zeta}\mathrm{Li}_2(\mathcal{X}_{\gamma}(\zeta))=\sum_{n>0}\frac{e^{\mathrm{i}n\varphi_{\gamma}}}{n^2}\int_{0}^{\infty}\frac{\mathrm{d}s}{s}\exp (-s^{-1}n\pi|Z_{\gamma}| - sn\pi|Z_{\gamma}|)\,,
    \end{equation}
    where we have used the Fubini-Tonelli theorem to exchange the sum with the integral. Now note that by using the integral representation of the Bessel function $K_{\nu}(x)$ given by
    \begin{equation}
        K_{\nu}(x)=\int_{0}^{\infty}\mathrm{d}t \; \exp(-x\cosh(t))\cdot \cosh(\nu t), \quad x>0\,,
    \end{equation}
    one finds by the substitution $s=e^t$ that
    \begin{equation}
    \begin{split}
        \int_{0}^{\infty}\frac{\mathrm{d}s}{s}\exp (-s^{-1}n\pi|Z_{\gamma}| - sn\pi|Z_{\gamma}|)&=\int_{-\infty}^{\infty}\mathrm{d}t \exp(-2n\pi |Z_{\gamma}|\cosh(t))\\
        &=2\int_{0}^{\infty}\mathrm{d}t \exp(-2n\pi |Z_{\gamma}|\cosh(t))\\
        &=2K_0(2\pi n |Z_{\gamma}|)\,.\\
    \end{split}
    \end{equation}
    Hence, it follows that
    \begin{equation}
        J=\frac{1}{2\pi \mathrm{i}}\sum_{\gamma}\Omega(\gamma)\sum_{n>0}\frac{e^{\mathrm{i}n\varphi_{\gamma}}}{n^2}K_0(2\pi n |Z_{\gamma}|)=\frac{1}{4\pi \mathrm{i}}\sum_{\gamma}\Omega(\gamma)\int_{l_{\gamma}}\frac{\mathrm{d}\zeta}{\zeta}\mathrm{Li}_2(\mathcal{X}_{\gamma}(\zeta))\,.
    \end{equation}
\end{proof}

The above proposition suggests that a function similar to \cite[Equation (1.5)]{heavenlyJoyce}, or some other related function built out of it, might give a solution to \eqref{Plebanski-add} and \eqref{Plebanski-like}, and hence define a special Joyce structure in the case of coupled variations of BPS structures.  In this case $\mathcal{X}_{\gamma}$ from \eqref{xsm} should be replaced with a solution to the TBA integral equations of \cite[Equation (5.13)]{GMN}, and the integral kernel from the second term in \cite[Equation (1.5)]{heavenlyJoyce} should be replaced with the kernel relevant to \cite{GMN}. Whether or not this is possible will be left for future work. 

\end{appendix}

\bibliography{References}
\bibliographystyle{alpha}
\end{document}